\setlist{labelindent=1pt,itemsep=.5em}
\setlist[itemize]{leftmargin=1.2cm}
\setlist[enumerate]{itemindent=0em,leftmargin=1.2cm}
\setlist[enumerate,1]{label={\upshape(\roman*)}}
\newtheorem{defn}{Definition}[section]
\newtheorem{thm}[defn]{Theorem}
\newtheorem{lem}[defn]{Lemma}
\newtheorem{prop}[defn]{Proposition}
\newtheorem{cor}[defn]{Corollary}
\newtheorem{ex}[defn]{Example}
\newtheorem{re}[defn]{Remark}
\def\c{\cdot}
\def\o{\otimes}
\numberwithin{equation}{section}
\newcommand{\AKMSbracket}[1]{\left[#1\right]}
\newcommand{\AKMSpara}[1]{\left(#1\right)}
\newcommand{\subjclass}[2][2010]{%
  \let\@oldtitle\@title%
  \gdef\@title{\@oldtitle\footnotetext{#1 \emph{Mathematics subject classification}: #2}}%
}
\newcommand{\keywords}[1]{%
  \let\@@oldtitle\@title%
  \gdef\@title{\@@oldtitle\footnotetext{\emph{Keywords}: #1.}}%
}
\title{Structure and cohomology of 3-Lie-Rinehart superalgebras}
\author{Abdelkader Ben Hassine$^{1,2}$, Taoufik Chtioui$^{2}$, \authorcr Sami Mabrouk$^{3}$, Sergei Silvestrov$^{4}$\footnote{ Corresponding author: sergei.silvestrov@mdh.se} \\
\small{$^{1}$Department of Mathematics, Faculty of Science and Arts at Belqarn, \\
P. O. Box 60, Sabt Al-Alaya 61985, University of Bisha, Kingdom of Saudi Arabia \authorcr
$^{2}$Faculty of Sciences, University of Sfax, BP 1171, 3000 Sfax, Tunisia \authorcr
$^{3}$Faculty of Sciences, University of Gafsa, BP 2100, Gafsa, Tunisia \authorcr
$^{4}$Division of Applied Mathematics, School of Education, Culture and Communication, M\"{a}lardalen University, Box 883, 72123 V\"{a}steras, Sweden. }}
\subjclass[2020]{17A40; 17B05; 17B60; 17B66; 17B61; 53D17}
\keywords{Lie-Rinehart superalgebra, 3-Lie-Rinehart superalgebra,  Cohomology, Deformation}
\date{}
\begin{document}

\maketitle

\abstract{We introduce the concept of 3-Lie-Rinehart superalgebra and systematically describe a cohomology complex by considering coefficient modules. Furthermore, we study the relationships between a Lie-Rinehart superalgebra and  its induced 3-Lie-Rinehart superalgebra. The deformations of 3-Lie-Rinehart superalgebra are considered via the cohomology theory.}

\section*{Introduction}
The notion of Lie-Rinehart algebras was introduced by J. Herz in \cite{Herz} and mainly
developed in \cite{Palais,Rinehart2}. A Lie-Rinehart algebra can be thought as a Lie $\mathbb{K}$-algebra, which is simultaneously an $A$-module, where $A$ is an associative and
commutative $\mathbb{K}$-algebra, in such a way that both structures are related by a compatibility condition. A first approach to this class of algebras can be found in \cite{Huebschmann:Poissoncohomquantiz,Huebschmann1}.

G. Rinehart developed in \cite{Rinehart2} a formalism of differential forms for general commutative algebras which relies on the notion of $(\mathbb{K}, A)$-Lie algebra where $\mathbb{K}$ is a commutative ring with unit and $A$ a commutative $\mathbb{K}$-algebra. The notion of Lie-Rinehart algebra includes
an abstract algebraic characterization of the algebraic structure which underlies a Lie algebroid \cite{HigginsMackenzie,Huebschmann:Poissoncohomquantiz,Mackenzie,Mackenzie1}. Thus a Lie-Rinehart
algebra is an algebraic generalization of the notion of a Lie algebroid: the space of sections of a vector bundle is replaced by a
module over a ring, a vector field by a derivation of the ring. For further details and a history of the notion of
Lie-Rinehart algebra, we refer to \cite{Huebschmann3}.
Lie-Rinehart algebras have been investigated further in many papers \cite{Bkouche,Casas,ChenLiuZhong,Huebschmann:Poissoncohomquantiz,Huebschmann1,Huebschmann:LieRinGerstenhBatalinVil,Huebschmann:ExtensionsLieRinehart,Huebschmann:MultiderivMaurerCartanSHLieRin,KrahmerRovi,Mackenzie}.

Some generalizations of Lie-Rinehart algebras, such as Lie-Rinehart superalgebras \cite{Chemla} or restricted Lie-Rinehart algebras \cite{Dokas},
have been recently studied.  The cohomology $H^*_{Rin}(L; M)$ of a Lie-Rinehart algebra $L$ with coefficients in a
Lie-Rinehart module $M$ was first defined by Rinehart \cite{Rinehart2} and further developed in \cite{Huebschmann:Poissoncohomquantiz}.

In \cite{CasasLadraPirashvili,CasasLadraPirashvili1,Casas},
the authors introduced the notion of cross modules of Lie-Rinehart algebras. Hom-Lie-Rinehart algebras and their extensions in the small dimension cohomology space was introduced and studied in \cite{MandalMishra:HomLieRinehartalgebras}.
Hom-Rinehart algebras have close relations with Hom-Gerstenhaber Algebras and Hom-Lie Algebroids \cite{MandalMishra:HomGerstenhaberHomLiealgebroids,MishraSilvestrov:SpringerAAS2020HomGerstenhalgsHomLiealgds}.

The study of $3$-Lie algebras \cite{Filippov} gets a lot of attention since it has close relationships with Lie algebras, Hom-Lie algebras, commutative associative algebras, and cubic matrices \cite{BaiBaiWang,BaiWu,BaiLiWang,CalderonPiulestan}. For example, it is applied to the study of Nambu mechanics and the study of supersymmetry and gauge symmetry transformations of the world-volume theory of multiple coincident M2-branes \cite{AlbuquerqueBarreiroCalderonAnchez,MandalMishra:HomLieRinehartalgebras,Rota,Takhtajan}.

In 1996, the concept of $n$-Lie superalgebras was firstly
introduced by Daletskii and Kushnirevich in \cite{Yu&Daletskii&Kushnirevich}. Moreover, Cantarini and Kac gave a more general concept of
$n$-Lie superalgebras again in 2010 (see \cite{CantariniKac}). $n$-Lie superalgebras
are more general structures including $n$-Lie algebras, $n$-ary Nambu-Lie superalgebras, and Lie superalgebras.
The construction of $(n+1)$-Lie algebras induced by $n$-Lie algebras using combination of bracket multiplication with a trace, motivated by the works on the quantization of the Nambu brackets \cite{almy:quantnambu}, was generalized using the brackets of general Hom-Lie algebra or $n$-Hom-Lie algebra and trace-like linear forms satisfying some conditions depending on the linear maps defining the Hom-Lie or $n$-Hom-Lie algebras in \cite{ams:ternary,ams:n}. The structure of $3$-Lie algebras induced by Lie algebras, classification of $3$-Lie algebras and application to constructions of B.R.S. algebras have been considered in \cite{Abramov2018:WeilAlg3LiealgBRSalg,AbramovLatt2016:classifLowdim3Liesuperalg,Abramov2017:Super3LiealgebrasinducedsuperLiealg}.
Interesting constructions of ternary Lie superalgebras in connection to superspace extension of Nambu-Hamilton equation is considered in \cite{AbramovLatt:SpringerAAS2020TernLieSuperalgsNambuHamiltoneqs}. In \cite{HMN}, a method was demonstrated of how to construct $n$-ary multiplications from the binary multiplication of a Hom-Lie algebra and a $(n-2)$-linear function satisfying certain compatibility conditions. Solvability and Nilpotency for $n$-Hom-Lie Algebras and
$(n+1)$-Hom-Lie Algebras induced by $n$-Hom-Lie Algebras have been considered in \cite{kms:solvnilpnhomlie2020}.
In \cite{Abramov2017:Super3LiealgebrasinducedsuperLiealg,Abramov2019:3LiesuperalgsLiesuperals,AbramovLatt:SpringerAAS2020TernLieSuperalgsNambuHamiltoneqs}, $3$-Lie superalgebras were constructed starting with a Lie superalgebras and various properties of such algebras considered. Related constructions for $n$-ary hom-Lie algebras and for $n$-ary hom-Lie superalgebras can be found in \cite{AbramovSilvestrov:3homLiealgsigmaderivINvol,GuanChenSun,HMN,KitouniMakhloufSilvestrov,akms:ternary,ams:ternary,ams:n}.
The ternary case of (Hom-)Lie Rinehart algebras was developed  in \cite{BaiLiWu,GuoZhangWang,BaiLieWu1}.

This paper is organized as follows.  In Section \ref{Sec:DefNotations}, we recall the notion of Lie-Rinehart superalgebra and define their cohomology group. We generalize the Lie-Rinehart superalgebra to the ternary case. Section \ref{sec:rbl3rbl} is devoted to some construction results. We begin by constructing $3$-Lie-Rinehart superalgebras starting with a Lie-Rinehart superalgebras.  We construct some new $3$-Lie-Rinehart superalgebras from a given $3$-Lie-Rinehart superalgebra.  The notion of a module for a $3$-Lie-Rinehart superalgebras appeared in Section \ref{sec:cohomdef3LieRinesuperalg}, and subsequently we introduce a cochain complex and cohomology of a $3$-Lie-Rinehart superalgebras with coefficients in a module. Then we study relations between 1 and 2 cocycles of a Lie-Rinehart superalgebra and the induced $3$-Lie-Rinehart superalgebra. At the end of this section, we study the deformation of $3$-Lie-Rinehart superalgebras.

\section{Definitions and Notations} \label{Sec:DefNotations}
In this section, we review basic definitions of Lie superalgebras, $3$-Lie superalgebras, Lie-Rinehart superalgebras and generalize the notion of $3$-Lie-Rinhehart algebras to the super case.

Let $V= V_{\overline 0}\oplus V_{\overline 1}$ be a $\mathbb{Z}_2$-graded vector space. If
$v \in V$ is a homogenous element, then its degree will be denoted by $\bar v$,
where $\bar v\in \mathbb{Z}_2$ and $\mathbb{Z}_2=\{\overline 0,\overline 1\}$. Let $End(V)$ be the $\mathbb{Z}_2$-graded vector space of
endomorphisms of a $\mathbb{Z}_2$-graded vector space $V= V_{\overline 0}\oplus V_{\overline 1}$. Denoted by $\mathcal{H}(V)$ the set of homogenous elements of $V$. The composition of
two endomorphisms $a\circ b$ determines the structure of Lie superalgebra in $End(V)$
and the graded binary commutator $[a, b] = a\circ b - (-1)^{\bar a\bar b}b \circ a$ induces the
structure of Lie superalgebra in $End(V)$.

\subsection{Lie-Rinehart superalgebras}

\begin{defn}[\cite{Kac}]
A Lie superalgebra is a pair $(\mathfrak g, [\cdot, \cdot])$ consisting
of a $\mathbb{Z}_2$-graded vector space $\mathfrak g=\mathfrak g_{\overline 0}\oplus \mathfrak g_{\overline 1}$, an even bilinear map $[\cdot,\cdot] : \mathfrak g\times\mathfrak g\longrightarrow\mathfrak g$  satisfying the following
 identities:
\begin{eqnarray}
& &[x,y] = -(-1)^{\bar x\bar y}[y, x],\ \text{super skew-symetric}\label{skewsupersymmetry}\\
& &\displaystyle\circlearrowleft_{x,y,z}(-1)^{\bar x\bar z}[x, [y,z]] = 0,\  \text{super-Jacobi}\label{SuperJacobi}
\end{eqnarray}
where $x, y$ and $z$ are homogeneous elements in $\mathfrak g$.
\end{defn}

\begin{defn}
A representation of a Lie superalgebra $(\mathfrak g, [\cdot,\cdot])$
on a $\mathbb Z_2$-graded vector space $V=V_{\overline 0} \oplus V_{\overline 1}$ is an
even linear map $\mu:\mathfrak g\rightarrow\mathfrak{gl}(V)$, such that for all homogeneous $x, y \in \mathfrak g,$ the following
identity is satisfied:
\begin{equation}\label{rep-super}
\mu([x, y])= \mu(x)\mu(y)-(-1)^{\bar x\bar y}\mu(y)\mu(x).
\end{equation}
\end{defn}
Now, we recall the definition of Lie-Rinehart superalgebra \cite{Chemla}.
\begin{defn} \label{rinehart-super}
A Lie-Rinehart superalgebra $L$ over (an associative supercommutative superalgebra) $A$ is a Lie superalgebra
over $\mathbb K$ with an $A$-module structure and an even linear map $\mu: L \rightarrow Der(A)$, such that
 that following conditions hold:
 \begin{enumerate}
\item $\mu$ is a representation of $(L, [\cdot,\cdot])$ on $A$.
\item $\mu(ax) = a\mu(x)$ for all $a\in A, x \in L$.
\item The compatibility condition:
\begin{equation}
\forall a\in \mathcal H(A), x,y \in \mathcal H(L):
\quad [x, ay] = \mu(x)ay + (-1)^{\bar a\bar x} a[x, y].
\label{compatibility-rinhe-super}
\end{equation}
\end{enumerate}
\end{defn}
\begin{ex}
Let us observe that Lie-Rinehart superalgebras over $A$ with trivial  map
$\mu : L\to  Der(A)$ are exactly Lie superalgebras. If $A = \mathbb{K}$, then $Der(A)=0$ and there is no
difference between Lie and Lie-Rinehart superalgebras. Therefore the concept of Lie-Rinehart
superalgebras generalizes the concept of Lie superalgebras.
\end{ex}
\begin{ex}
Let $A$ an associative supercommutative superalgebra. Then, the $A$-module $Der(A)\oplus A$ is a Lie-Rinehart superalgebra over $A$ with the
bracket
$$
[(D, a), (D', a')] = ([D, D'],  D(a') -(-1)^{\bar D'\bar a} D'(a)),
$$
and an even map $p_1 : Der(A) \oplus A \to Der(A)$, the projection onto the first factor.
\end{ex}
Now, we define the cohomology of a Lie-Rinehart superalgebra. First we introduce the notion of left module
over a Lie-Rinehart superalgebra.

\begin{defn}
Let $M$ be an $A$-module. Then  $M$ is a left module over a
Lie-Rinehart superalgebra $L$  if there exits  an even map $\theta : L\otimes M \rightarrow M$  such that:
\begin{enumerate}
 \item $\theta$ is a representation of the Lie superalgebra
$(L, [\cdot,\cdot])$ on $M$.
\item $\theta(ax, m) = a\theta (x, m)$ for all $a \in A, x \in  L, m \in  M.$
\item $\theta(x, am) = (-1)^{\bar a\bar x}a\theta(x, m)+ \rho(x)(a)(m)$ for all $x\in \mathcal H(L),\ a \in \mathcal H(A), m \in M.$
\end{enumerate}
\end{defn}

Let $(L, A,[\cdot,\cdot],\mu)$ be a Lie-Rinehart superalgebra, and let $M$ be a left  module over $L$. For Lie-Rinehart superalgebra $L$ with coefficients in $M$, consider the $\mathbb{Z}_+$-graded space of $\mathbb{K}$-modules
$$C^*(L; M) := \displaystyle\oplus_{n\geq0}C^n(L; M)$$ where $C^n(L; M) \subset Hom(\wedge^nL, M)$
consisting of elements satisfying:
  $$f(x_1,\dots,ax_i,\dots, x_n) =(-1)^{\bar a(\bar x_1+\bar x_2+\dots+\bar x_{i-1}+\bar f)}af(x_1, \dots, x_i,\dots, x_n),$$
   for all $x_i \in\mathcal H(L), i=1,\dots, n$ and $a \in\mathcal H(A)$.

Define the even linear map $\delta_{LR}: C^n(L; M)\rightarrow C^{n+1}(L; M)$ given by
\begin{multline*}
\delta_{LR} f(x_1,\dots, x_{n+1}) :=\displaystyle\sum^{n+1}_{i=1}(-1)^{i+1+\bar x_i(\bar f+\bar x_1+\dots+\bar x_{i-1})}\theta(x_i,f(x_1,\dots, \widehat{x}_i,\dots, x_{n+1}))\\
+\displaystyle\sum_{1\leq i<j\leq n+1}(-1)^{(\bar x_i+\bar x_j)(\bar x_1+\dots+\bar x_{i-1}+\bar x_{i+1}+\dots+\bar x_{j-1})}f([x_i, x_j], x_1,\dots,\widehat{x}_i, \dots,\widehat{x}_j, \dots,x_{n+1})
\end{multline*}
for all $f \in C^n(L; M),\ x_i \in \mathcal H(L),$ where $1 \leq i \leq n+1$.

With the above notation, the map $\delta_{LR}$ gives rise to a coboundary map.
\begin{prop}
If $f \in C^n(L; M)$, then $\delta^2_{LR}f \in C^{n+1}(L; M)$ and $\delta^2_{LR}=0.$
\end{prop}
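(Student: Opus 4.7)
The plan is to verify two things separately: first, that $\delta_{LR}f$ actually lies in $C^{n+1}(L;M)$ (so that $\delta_{LR}$ is well-defined as a map $C^{n}\to C^{n+1}$), and second, that $\delta_{LR}\circ\delta_{LR}=0$.

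For the well-definedness, I fix homogeneous $x_{1},\ldots,x_{n+1}\in\mathcal H(L)$ and $a\in\mathcal H(A)$ and expand $\delta_{LR}f(x_{1},\ldots,ax_{i},\ldots,x_{n+1})$ using the defining formula. The terms in the first sum with index $k\neq i$ and the terms of the second sum with $i\notin\{j,k\}$ are handled directly by the $A$-multilinearity of $f$ inherited from $f\in C^{n}(L;M)$, after tracking the Koszul signs needed to push $a$ past $x_{1},\ldots,x_{i-1}$ and past $\bar{f}$. The terms with $k=i$ in the first sum and those with $i\in\{j,k\}$ in the second sum each produce, besides an $A$-multiple, an extra ``anchor derivative'' contribution: the module Leibniz rule $\theta(x,am)=(-1)^{\bar a\bar x}a\theta(x,m)+\mu(x)(a)m$ generates one, and the Lie-Rinehart compatibility \eqref{compatibility-rinhe-super} produces the other. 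The crucial point is that these anchor-derivative terms cancel against each other in pairs, leaving precisely the expression $(-1)^{\bar a(\bar x_{1}+\dots+\bar x_{i-1}+\bar f)}\,a\,\delta_{LR}f(x_{1},\ldots,x_{n+1})$.

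For the cocycle identity, I compute $\delta_{LR}^{2}f(x_{1},\ldots,x_{n+2})$ by applying the defining formula twice and regroup the resulting terms into four families: (a) terms of the form $\theta(x_{i},\theta(x_{j},f(\ldots)))$ coming from two successive module actions; (b) cross-terms $\theta(x_{i},f([x_{j},x_{k}],\ldots))$ and the analogues in which a bracket absorbs what was originally a $\theta$-action; (c) terms involving a nested bracket $f([x_{i},[x_{j},x_{k}]],\ldots)$; and (d) terms with two disjoint brackets $f([x_{i},x_{j}],[x_{k},x_{l}],\ldots)$. Family (a) combines with half of family (b) via the representation identity \eqref{rep-super} applied to $\theta$; the remaining cross-terms cancel pairwise inside (b). Family (c) vanishes by the super-Jacobi identity \eqref{SuperJacobi}, and family (d) cancels in pairs upon relabelling and using the super skew-symmetry \eqref{skewsupersymmetry} of the bracket.

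The main obstacle throughout will be sign bookkeeping: every move of a homogeneous argument past $f$ or past an $x_{j}$ contributes a Koszul sign involving $\bar f$ and partial sums $\bar x_{1}+\dots+\bar x_{i-1}$, and the cancellations described above require signs to match exactly. To keep this tractable I would introduce a shorthand such as $\sigma_{i}=\bar f+\bar x_{1}+\dots+\bar x_{i-1}$ and rewrite every coefficient in terms of $\sigma_{i}$ together with pairwise products $\bar x_{i}\bar x_{j}$. Once the signs are organized, both claims reduce to the standard super Chevalley--Eilenberg patterns, with the Lie-Rinehart compatibility and the super-$A$-linearity of $f$ playing precisely the role they play in the classical (non-super) Rinehart argument.
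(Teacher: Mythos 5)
The paper itself states this proposition without proof (it is treated as the routine super analogue of Rinehart's computation; only the ternary counterpart for $\delta_{3LR}$ is partially worked out later), so your outline is being measured against the standard argument, and in outline it is the right one. However, there is a concrete error in your well-definedness step. In the first sum of $\delta_{LR}f(x_1,\dots,ax_i,\dots,x_{n+1})$ the term with $k=i$ is $\theta(ax_i,f(x_1,\dots,\widehat{x_i},\dots,x_{n+1}))$, and module axiom (2), $\theta(ax,m)=a\theta(x,m)$, disposes of it with \emph{no} anchor contribution; it is the terms with $k\neq i$ that generate anchor contributions, because there $a$ sits inside $f$, and after extracting it by the $C^n$-condition you must push it past $\theta(x_k,\cdot)$ using axiom (3), which costs a term $\mu(x_k)(a)\,f(\dots)$. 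You have assigned these two roles exactly backwards. This is not merely cosmetic: as you describe the cancellation, a single Leibniz-rule term (from $k=i$) would have to cancel against the $n$ compatibility terms produced in the second sum by the pairs $\{k,i\}$ via \eqref{compatibility-rinhe-super}, which cannot close. The correct pairing is: first-sum index $k\neq i$ against second-sum pair $\{k,i\}$, one for each $k\neq i$, after using the alternating property of $f$ to move $x_i$ back to its slot; only then does the claimed identity $\delta_{LR}f(x_1,\dots,ax_i,\dots)=(-1)^{\bar a(\bar x_1+\dots+\bar x_{i-1}+\bar f)}a\,\delta_{LR}f(x_1,\dots,x_{n+1})$ come out.

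For $\delta^2_{LR}=0$ your four-family decomposition and the cancellation mechanisms --- the representation identity \eqref{rep-super} combining the $\theta\theta$-terms with the $\theta([x_i,x_j],\cdot)$-terms, pairwise cancellation of the mixed terms $\theta(x_k,f([x_i,x_j],\dots))$, super-Jacobi \eqref{SuperJacobi} for the nested brackets, and pairwise cancellation of the double-bracket terms --- are the standard ones and correct, except that the last family cancels using the super-alternating property of $f$ (to interchange the two bracket slots), not the skew-symmetry \eqref{skewsupersymmetry} of the bracket. Beyond these points, the proposal defers every Koszul sign verification, and for this statement the signs are essentially the whole content; to count as a proof you would need to carry out at least the well-definedness computation explicitly (as the paper does for its ternary analogue) and check that the signs in the corrected pairings above do match.
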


By the above proposition, $(C^*(L, M),\delta_{LR})$ is a cochain complex. The resulting cohomology of the
cochain complex we define to be the cohomology space of Lie-Rinehart superalgebra $(L, A,[\cdot,\cdot],\mu)$ with
coefficients in $M$, and we denote this cohomology as $H^*(L, M).$

\subsection{$3$-Lie-Rinehart superalgebras}

\begin{defn}[\cite{CantariniKac}]
A $\mathbb{Z}_2$-graded vector space $\mathfrak{g}=\mathfrak g_{\overline 0}\oplus\mathfrak g_{\overline 1}$ is said to be a $3$-Lie superalgebra, if it is endowed with an even trilinear map (bracket) $[\cdot,\cdot,\cdot]:\mathfrak g\times \mathfrak g \times \mathfrak g\rightarrow \mathfrak g$, satisfying the following conditions:
\begin{gather}
 [x, y, z] = -(-1)^{\bar x \bar y}[y, x, z], \quad [x, y, z] = -(-1)^{\bar y \bar z}[x, z, y], \label{3skewsuper:skewsym}\\
\begin{split} [x, y, [z, u, v]] = [[x, y, z], u, v] &+ (-1)^{\bar z(\bar x+\bar y)}[z, [x, y, u], v]  \\
 & + (-1)^{(\bar z+\bar u)(\bar x+\bar y)}[z, u, [x, y, v]],
\end{split} \label{3skewsuper}
\end{gather}
where $x, y, z, u, v \in\mathfrak g$ are homogeneous elements.
\end{defn}
\begin{prop} \label{pro:someequalities}
Let $\mathfrak g$ be a $\mathbb{Z}_2$-graded vector space  together with a super skew-symmetric even linear map $[\cdot,\cdot,\cdot]:
\otimes^3 \mathfrak g\rightarrow \mathfrak g$. Then $(\mathfrak g,[\cdot,\cdot,\cdot])$ is a $3$-Lie superalgebra if and only if the following identities hold:
\begin{align}
\begin{split} [[x_1,x_2,x_3],x_4, x_5]-(-1)^{\bar x_3\bar x_4}[[x_1,x_2,x_4],x_3,x_5]
+(-1)^{\bar x_2(\bar x_3+\bar x_4)}[[x_1,x_3,x_4],x_2,x_5] \\
-(-1)^{\bar x_1(\bar x_2+\bar x_3+\bar x_4)}[[x_2,x_3,x_4],x_1,x_5]=0,
\end{split}
\label{eq1:pro:someequalities}\\
\begin{split}
(-1)^{(\bar x_4+\bar x_5)(\bar x_1+\bar x_2+\bar x_3)}[[x_4,x_5,x_1],x_2,x_3]+(-1)^{(\bar x_4+\bar x_5)(\bar x_2+\bar x_3)}[x_1,[x_4,x_5,x_2],x_3]\\
    +(-1)^{(\bar x_1+\bar x_2)(\bar x_3+\bar x_4)}[[x_3,x_5,x_1],x_2,x_4]-(-1)^{(\bar x_3+\bar x_5)\bar x_2+\bar x_4\bar x_5}[x_1,[x_5,x_3,x_2],x_4]\\+(-1)^{\bar x_3(\bar x_4+\bar x_5)}[x_1,x_2,[x_4,x_5,x_3]]=0,
\end{split}
\label{eq2:pro:someequalities}
\end{align}
for all $x_i\in \mathcal H(\mathfrak g), 1\leq i\leq 5$. 
\end{prop}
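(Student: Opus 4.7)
The plan is to prove both implications directly, exploiting the super skew-symmetry (\ref{3skewsuper:skewsym}) to permute entries of the ternary bracket and thereby convert between the ``inner-derivation'' form (\ref{3skewsuper}) of the fundamental identity and the equivalent relations (\ref{eq1:pro:someequalities})--(\ref{eq2:pro:someequalities}). A useful preliminary that I would establish once and reuse throughout is the graded cyclic rule $[a,b,c]=(-1)^{\bar{a}(\bar{b}+\bar{c})}[b,c,a]$, which follows from iterating (\ref{3skewsuper:skewsym}).

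For the direction $(\Rightarrow)$, I would first derive (\ref{eq2:pro:someequalities}). Apply (\ref{3skewsuper}) with the substitution $(x,y,z,u,v)\mapsto(x_4,x_5,x_1,x_2,x_3)$, then transport $x_4,x_5$ to the outside of the outer bracket on the left-hand side using the cyclic rule with $c=[x_1,x_2,x_3]$; the sign produced is exactly $(-1)^{(\bar{x}_4+\bar{x}_5)(\bar{x}_1+\bar{x}_2+\bar{x}_3)}$, matching the first term of (\ref{eq2:pro:someequalities}). To obtain the remaining terms I would re-apply (\ref{3skewsuper}) to other permutations of the five arguments and use (\ref{3skewsuper:skewsym}) to rearrange each inner 3-bracket into the precise forms $[x_4,x_5,x_2]$, $[x_3,x_5,x_1]$, $[x_5,x_3,x_2]$, $[x_4,x_5,x_3]$ required by (\ref{eq2:pro:someequalities}), collecting the resulting Koszul signs.

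The identity (\ref{eq1:pro:someequalities}) is of a different nature: it is the graded antisymmetrization, in the four indices $1,2,3,4$, of the nested expression $[[x_1,x_2,x_3],x_4,x_5]$. I would derive it by viewing the operator $\ad(x_4,x_5):=[x_4,x_5,\cdot]$ as a super-derivation on the 3-bracket (this is what (\ref{3skewsuper}) asserts) and then expressing each term $[[x_{\sigma(1)},x_{\sigma(2)},x_{\sigma(3)}],x_{\sigma(4)},x_5]$ through (\ref{3skewsuper}) applied ``inwards'' via super skew-symmetry. Adding the four rewritten terms with the alternating signs in (\ref{eq1:pro:someequalities}) produces cancellations among the derivation-distributed pieces; the prefactors $(-1)^{\bar{x}_3\bar{x}_4}$, $(-1)^{\bar{x}_2(\bar{x}_3+\bar{x}_4)}$ and $(-1)^{\bar{x}_1(\bar{x}_2+\bar{x}_3+\bar{x}_4)}$ are precisely the Koszul signs created by the successive adjacent transpositions of homogeneous elements.

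For the direction $(\Leftarrow)$, I would recover (\ref{3skewsuper}) from (\ref{eq2:pro:someequalities}) by reversing the first step: transport $x_4,x_5$ back to the leading position of each outer bracket using the cyclic rule, so that the prefactors collapse and the five terms of (\ref{eq2:pro:someequalities}) reassemble into (\ref{3skewsuper}) after relabeling. The identity (\ref{eq1:pro:someequalities}) then becomes a redundant consequence, but can also be recovered by combining (\ref{eq2:pro:someequalities}) with (\ref{3skewsuper:skewsym}) on the outer brackets. The principal obstacle throughout is the meticulous bookkeeping of Koszul signs: every elementary super-transposition contributes a sign, and verifying that the accumulated contributions match exactly the prefactors displayed in (\ref{eq1:pro:someequalities}) and (\ref{eq2:pro:someequalities}) is the most error-prone—though routine—part of the argument.
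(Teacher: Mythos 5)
Your plan for \eqref{eq2:pro:someequalities} (expand with \eqref{3skewsuper}, rearrange the nested brackets with \eqref{3skewsuper:skewsym}, re-apply \eqref{3skewsuper} and collect Koszul signs) is in the same spirit as the paper's proof, but the mechanisms you propose for \eqref{eq1:pro:someequalities} and for the converse do not work. For \eqref{eq1:pro:someequalities}: if you rewrite each of the four terms as $\pm[x_d,x_5,[x_a,x_b,x_c]]$ and distribute $[x_d,x_5,\cdot]$ by \eqref{3skewsuper}, the twelve resulting terms do \emph{not} cancel. Already in the purely even case their sum equals
\begin{align*}
2\bigl([[x_1,x_2,x_5],x_3,x_4]+[[x_1,x_4,x_5],x_2,x_3]+[[x_2,x_3,x_5],x_1,x_4]\\
\qquad +[[x_3,x_4,x_5],x_1,x_2]-[[x_1,x_3,x_5],x_2,x_4]-[[x_2,x_4,x_5],x_1,x_3]\bigr),
\end{align*}
i.e.\ twice an expression of exactly the type \eqref{eq2:pro:someequalities}; so your antisymmetrization argument reduces \eqref{eq1:pro:someequalities} to another identity of the same depth rather than finishing, and is circular as described. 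The paper's derivation of \eqref{eq1:pro:someequalities} is different and shorter: expand $[x_1,x_2,[x_3,x_4,x_5]]$ once by \eqref{3skewsuper}, then expand only the last summand $[x_3,x_4,[x_1,x_2,x_5]]$ by \eqref{3skewsuper} again; the term $[x_1,x_2,[x_3,x_4,x_5]]$ reappears and cancels against the left-hand side, and the four surviving terms are \eqref{eq1:pro:someequalities} after applying \eqref{3skewsuper:skewsym}. The cancellation there is of that single reproduced term, not of ``derivation-distributed pieces'' of an antisymmetrized sum.

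For the converse, ``transporting $x_4,x_5$ back'' cannot recover \eqref{3skewsuper} from \eqref{eq2:pro:someequalities}: every term of \eqref{eq2:pro:someequalities} has $x_5$ inside the nested bracket, whereas \eqref{3skewsuper} (with $x,y=x_4,x_5$, say) contains the term $[x_4,x_5,[x_1,x_2,x_3]]$ whose inner bracket contains neither $x_4$ nor $x_5$; since \eqref{3skewsuper:skewsym} never changes which arguments sit inside the nested bracket, no single instance of \eqref{eq2:pro:someequalities} can be rearranged into \eqref{3skewsuper}. Your claim that \eqref{eq1:pro:someequalities} is then redundant is therefore unsupported; the paper's converse genuinely uses both identities, first rewriting $[[x_1,x_2,x_3],x_4,x_5]$ via \eqref{eq1:pro:someequalities} and then applying \eqref{eq2:pro:someequalities} to arrive at \eqref{3skewsuper}. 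Finally, your closing assessment that the sign bookkeeping is ``routine'' underestimates precisely the delicate step: carrying out the double expansion in full yields a six-term identity containing $(-1)^{(\bar x_1+\bar x_2)(\bar x_3+\bar x_4)}[x_3,x_4,[x_1,x_2,x_5]]$ and with signs on the third and fourth terms differing from \eqref{eq2:pro:someequalities} as printed, so this is exactly the point where every Koszul sign must be verified explicitly rather than assumed to fall into place.
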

\begin{proof}
If $(\mathfrak g,[\cdot,\cdot,\cdot])$ is a $3$-Lie superalgebra, then  applying \eqref{3skewsuper} to the last term in
\eqref{3skewsuper} for $x_i\in \mathcal H(\mathfrak g), 1\leq i\leq 5$ and using \eqref{3skewsuper:skewsym},
\begin{eqnarray*}
  [x_1,x_2,[x_3,x_4,x_5]]
  &=&[[x_1,x_2,x_3],x_4,x_5]+(-1)^{\bar x_3(\bar x_1+\bar x_2)}[x_3,[x_1,x_2,x_4],x_5]\\
 && +(-1)^{(\bar x_3+\bar x_4)(\bar x_1+\bar x_2)}[x_3,x_4,[x_1,x_2,x_5]]\\
  &=&[[x_1,x_2,x_3],x_4,x_5]+(-1)^{\bar x_3(\bar x_1+\bar x_2)}[x_3,[x_1,x_2,x_4],x_5]\\
  &&+(-1)^{(\bar x_3+\bar x_4)(\bar x_1+\bar x_2)}[[x_3,x_4,x_1],x_2,x_5] \\
  &&+(-1)^{\bar x_2(\bar x_3+\bar x_4)}[x_1,[x_3,x_4,x_2],x_5]\\
  &&+[x_1,x_2,[x_3,x_4,x_5]],
\end{eqnarray*}
yields \eqref{eq1:pro:someequalities}.\\
Similarly, applying \eqref{3skewsuper} to the first and second terms on the right hand side of \eqref{3skewsuper} for $x_i\in \mathcal H(\mathfrak g), 1\leq i\leq 5$ and using \eqref{3skewsuper:skewsym},
{\small\begin{eqnarray*}
 [x_1,x_2,[x_3,x_4,x_5]]
&=& (-1)^{(\bar x_4+\bar x_5)(\bar x_1+\bar x_2+\bar x_3)}[x_4,x_5,[x_1,x_2,x_3] \\
&& -(-1)^{(\bar x_3+\bar x_5)(\bar x_1+\bar x_2)+\bar x_4\bar x_5}[x_3,x_5,[x_1,x_2,x_4]]\\
 &&+(-1)^{(\bar x_3+\bar x_4)(\bar x_1+\bar x_2)}[x_3,x_4,[x_1,x_2,x_5]]\\
 &=&
 (-1)^{(\bar x_4+\bar x_5)(\bar x_1+\bar x_2+\bar x_3)}[[x_4,x_5,x_1],x_2,x_3] \\
 && +(-1)^{(\bar x_4+\bar x_5)(\bar x_2+\bar x_3)}[x_1,[x_4,x_5,x_2],x_3] \\
 &&+(-1)^{\bar x_3(\bar x_4+\bar x_5)}[x_1,x_2,[x_4,x_5,x_3]] \\
 && -(-1)^{(\bar x_3+\bar x_5)(\bar x_1+\bar x_2)+\bar x_4\bar x_5}[[x_3,x_5,x_1],x_2,x_4]\\
 &&-(-1)^{(\bar x_3+\bar x_5)\bar x_2+\bar x_4\bar x_5}[x_1,[x_3,x_5,x_2],x_4] \\
 && -(-1)^{\bar x_4\bar x_5}[x_1,x_2,[x_3,x_5,x_4]]\\
 &&+(-1)^{(\bar x_1+\bar x_2)(\bar x_3+\bar x_4)}[x_3,x_4,[x_1,x_2,x_5]],
\end{eqnarray*}}
yields \eqref{eq2:pro:someequalities}.

Conversely, suppose that \eqref{eq1:pro:someequalities} and \eqref{eq2:pro:someequalities} hold. First \eqref{eq1:pro:someequalities} gives
\begin{eqnarray*}
&& [[x_1,x_2,x_3],x_4, x_5]=(-1)^{\bar x_3\bar x_4}[[x_1,x_2,x_4],x_3,x_5]\\
&& \quad -(-1)^{\bar x_2(\bar x_3+\bar x_4)}[[x_1,x_3,x_4],x_2,x_5] + (-1)^{\bar x_1(\bar x_2+\bar x_3+\bar x_4)}[[x_2,x_3,x_4],x_1,x_5] \\
&& =(-1)^{(\bar x_3+\bar x_4+\bar x_5)(\bar x_1+\bar x_2)}[[x_3,x_4,x_5],x_1,x_2] \\
&& \quad +(-1)^{\bar x_1(\bar x_2+\bar x_4+\bar x_5)+\bar x_3(\bar x_4+\bar x_5)}[[x_2,x_4,x_5], x_1,x_3] + (-1)^{(\bar x_2+\bar x_3)(\bar x_4+\bar x_4)}[[x_1,x_4,x_5],x_2,x_3]
\end{eqnarray*}
by \eqref{eq2:pro:someequalities}.
Thus, $\mathfrak g$ is a $3$-Lie superalgebra.
\end{proof}

In the following  we recall that given a Lie superalgebra analogues of supertrace one can construct a $3$-Lie superalgebra \cite{Abramov2017:Super3LiealgebrasinducedsuperLiealg}. Let  $(\mathfrak g, [\cdot,\cdot])$ be a Lie superalgebra and $\tau:\mathfrak g\rightarrow \mathbb{K}$ an even linear form. We say that $\tau$ is a supertrace of $\mathfrak g$ if $\tau([\cdot,\cdot])=0$. For any $x_1, x_2,x_3 \in\mathcal{H}(\mathfrak g)$, we define
the 3-ary bracket by
\begin{equation}\label{crochet_n}
[x_1,x_2,x_3]_\tau=\tau(x_1)[x_2, x_3] - (-1)^{\bar x_1\bar x_2}\tau(x_2)[x_1, x_3]+(-1)^{\bar x_3 (\bar x_1+\bar x_2)}\tau(x_3)[x_1, x_2].
\end{equation}

\begin{thm}\label{ThmInduce1}
For any Lie superalgebra  $(\mathfrak g, [\cdot,\cdot])$ and supertrace $\tau$, the pair
$(\mathfrak g,[\cdot,\cdot,\cdot]_\tau)$ is a $3$-Lie superalgebra.
\end{thm}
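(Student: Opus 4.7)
The plan is to verify the two axioms defining a $3$-Lie superalgebra, namely the super skew-symmetry \eqref{3skewsuper:skewsym} and the fundamental identity \eqref{3skewsuper}, directly from the explicit formula \eqref{crochet_n} for $[\cdot,\cdot,\cdot]_\tau$. The only ingredients available are the super skew-symmetry and super-Jacobi identity of the underlying Lie superalgebra bracket $[\cdot,\cdot]$, together with the defining property of a supertrace, $\tau\bigl([x,y]\bigr)=0$ for all homogeneous $x,y$.

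First I would handle the super skew-symmetry. Substituting into \eqref{crochet_n} and permuting the first two (respectively the second and third) arguments, one gets two expressions each consisting of three summands of the form $\tau(\cdot)[\cdot,\cdot]$. Using \eqref{skewsupersymmetry} on each binary bracket and matching the Koszul signs shows that the resulting sum is the negative of the original with the expected sign factor. This step is routine and essentially mechanical.

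The core of the proof is the fundamental identity. I would expand each of the four trilinear expressions in \eqref{3skewsuper} (namely $[x,y,[z,u,v]]_\tau$ on the left-hand side and $[[x,y,z],u,v]_\tau$, $[z,[x,y,u],v]_\tau$, $[z,u,[x,y,v]]_\tau$ on the right-hand side) using \eqref{crochet_n}. Each expansion produces nine terms, so one has $9$ terms on the left and $27$ on the right to compare. I would organize the resulting terms by separating them according to where $\tau$ is evaluated:
\begin{enumerate}
\item terms containing a factor $\tau\bigl([a,b]\bigr)$ for some $a,b\in\mathfrak{g}$, which vanish immediately because $\tau$ is a supertrace;
\item terms of the form $\tau(a)\tau(b)[c,d]$ for three distinct $a,b\in\{x,y,z,u,v\}$;
\item genuine double commutator terms $\tau(a)\bigl[\cdot,[\cdot,\cdot]\bigr]$ weighted by a single trace.
\end{enumerate}
For the terms of type (ii), I expect cancellation in pairs coming from the antisymmetry of the outer combination in $x,y$ and the symmetry introduced by $\tau(a)\tau(b)$. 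For the terms of type (iii), I would regroup them so that the coefficient of each individual $\tau(w)$ (for $w\in\{x,y,z,u,v\}$) is a linear combination of nested binary brackets; this combination should reduce, by the super-Jacobi identity \eqref{SuperJacobi} for the Lie superalgebra $(\mathfrak{g},[\cdot,\cdot])$, to zero.

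The principal obstacle, as usual in super computations of this size, is the bookkeeping of Koszul signs: each expansion carries signs both from \eqref{crochet_n} and from the parities of the binary brackets that appear after a substitution (for instance, $\overline{[x,y]}=\bar x+\bar y$). I would therefore fix an ordering convention once and for all, do the five cases $w\in\{x,y,z,u,v\}$ separately, and verify in each case that the surviving terms match exactly one application of super-Jacobi. Once the coefficients of $\tau(x),\tau(y),\tau(z),\tau(u),\tau(v)$ are individually seen to vanish, the fundamental identity \eqref{3skewsuper} follows, and by Proposition \ref{pro:someequalities} the pair $(\mathfrak{g},[\cdot,\cdot,\cdot]_\tau)$ is a $3$-Lie superalgebra.
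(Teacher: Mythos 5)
The paper does not actually prove Theorem \ref{ThmInduce1}: it is imported from the literature (Abramov's construction of $3$-Lie superalgebras induced by Lie superalgebras), so there is no in-text argument to compare with, and your direct verification is the natural way to supply one. Your plan is sound in outline and would go through, but the bookkeeping of term types needs correcting before you carry it out. After expanding the four expressions in \eqref{3skewsuper} via \eqref{crochet_n}, every term either contains a factor $\tau\bigl([\cdot,\cdot]\bigr)$, and hence dies by the supertrace property, or is of the form $\tau(a)\tau(b)\bigl[c,[d,e]\bigr]$ with \emph{two} trace factors and one nested bracket; your categories (ii) (terms $\tau(a)\tau(b)[c,d]$ with a single bracket) and (iii) (double commutators weighted by a \emph{single} trace) do not occur as described. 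The efficient organization is by the unordered pair $\{a,b\}$ of traced arguments: for pairs with one element in $\{x,y\}$ and one in $\{z,u,v\}$ (six pairs), the LHS contributes one term and the RHS two, and their sum vanishes by exactly one instance of the super-Jacobi identity \eqref{SuperJacobi} in the three remaining variables; for pairs contained in $\{z,u,v\}$, the LHS contributes nothing and the two RHS terms cancel each other by a direct sign check; the pair $\{x,y\}$ never appears. So it is not true that the coefficient of each single $\tau(w)$ reduces to ``exactly one application of super-Jacobi''---grouping by single traces mixes several Jacobi instances with the plain cancellations. Two further small points: since $\tau$ is even, $\tau(w)=0$ for odd $w$, which you may use to prune Koszul signs; and the closing appeal to Proposition \ref{pro:someequalities} is superfluous, because super skew-symmetry together with \eqref{3skewsuper} is already the definition of a $3$-Lie superalgebra.
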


\begin{defn}
Let $\mathfrak g$ be a $3$-Lie superalgebra,  $V$ be a graded vector space and
$\rho: \mathfrak g\wedge \mathfrak g\rightarrow \mathfrak{gl}(V)$ be an even linear mapping. If $\rho$ satisfies
\begin{eqnarray}\label{eq:mod1}
& \begin{aligned} \rho(x_1, x_2)\rho(x_3, x_4)-(-1)^{(\bar x_1+\bar x_2)(\bar x_3+\bar x_4)}\rho(x_3, x_4)\rho(x_1, x_2)\\
=\rho([x_1, x_2, x_3], x_4)-(-1)^{\bar x_3\bar x_4}\rho([x_1, x_2, x_4], x_3),
\end{aligned} \\
\label{eq:mod2}
& \begin{aligned}
\rho([x_1, x_2, x_3], x_4)=\rho(x_1, x_2)\rho(x_3, x_4)+(-1)^{\bar x_1(\bar x_2+\bar x_3)}\rho(x_2, x_3)\rho(x_1, x_4) \\
+(-1)^{\bar x_3(\bar x_1+\bar x_2)}\rho(x_3, x_1)\rho(x_2, x_4),
\end{aligned}
\end{eqnarray}
for all $x_i\in \mathcal H(\mathfrak g), 1\leq i\leq 4,$ then $(V, \rho)$ is called  a representation  of $\mathfrak g$, or $(V, \rho)$
is an $\mathfrak g$-module.
\end{defn}

Define \begin{equation}\label{eq:ad}
\mbox{ad}: \mathfrak g\wedge \mathfrak g\rightarrow \mathfrak{gl}(\mathfrak g),  ~~~~\mbox{ad}_{x, y}(z)=[x, y, z].~
\end{equation}
Using  \eqref{3skewsuper}, we can see that $(\mathfrak g, \mbox{ad})$ is a representation of the $3$-Lie superalgebra $\mathfrak g$  and it is called  the adjoint representation of $\mathfrak g$.

\begin{prop}\label{induced-rep}
Let $(V,\mu)$ be a representation of a Lie superalgebra $(\mathfrak g,[\cdot,\cdot])$ and $\tau$ be a supertrace of $\mathfrak g$. Then $(V, \rho_\tau)$ is a representation of the $3$-Lie superalgebra $(\mathfrak g, [\cdot,\cdot,\cdot]_\tau)$ where $\rho_\tau:\mathfrak g\wedge \mathfrak g\rightarrow \mathfrak{gl}(V)$ is defined by
\begin{equation}\label{rep-tau}
\rho_\tau(x,y)=\tau(x)\mu(y)-(-1)^{\bar x\bar y}\tau(y)\mu(x), \quad \quad \forall x,y\in \mathcal H(\mathfrak g).
\end{equation}
\end{prop}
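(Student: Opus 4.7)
The plan is to verify directly the two representation axioms \eqref{eq:mod1} and \eqref{eq:mod2} for the pair $(V,\rho_\tau)$ with respect to the induced ternary bracket $[\cdot,\cdot,\cdot]_\tau$ defined in \eqref{crochet_n}. The input data consist of only three identities: that $\mu$ is a Lie superalgebra representation (relation \eqref{rep-super}), that $\tau$ is a supertrace (so $\tau([y,z])=0$ for all homogeneous $y,z$), and super skew-symmetry of $[\cdot,\cdot]$. No super-Jacobi-type computation is needed for this particular proposition, since the hard structural content (that $[\cdot,\cdot,\cdot]_\tau$ is itself a $3$-Lie superalgebra bracket) has already been absorbed into Theorem \ref{ThmInduce1}.

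The first preparatory step is to observe that $\tau\bigl([x_1,x_2,x_3]_\tau\bigr)=0$. Indeed, applying $\tau$ to \eqref{crochet_n} produces three terms of the form $\tau(x_i)\tau([x_j,x_k])$, each of which vanishes by the supertrace hypothesis. Consequently, for any homogeneous $u,v\in\mathcal H(\mathfrak g)$ with $u=[x_1,x_2,x_3]_\tau$, one has
\[
\rho_\tau(u,v)=-(-1)^{\bar u\bar v}\tau(v)\,\mu(u),
\]
which already eliminates half of the terms appearing on the right-hand sides of \eqref{eq:mod1} and \eqref{eq:mod2}. This is the main simplification that makes the verification feasible.

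Next I would substitute \eqref{crochet_n} into the surviving factor $\mu([x_1,x_2,x_3]_\tau)$ and use \eqref{rep-super} to rewrite each $\mu([x_i,x_j])$ as $\mu(x_i)\mu(x_j)-(-1)^{\bar x_i\bar x_j}\mu(x_j)\mu(x_i)$. This converts the right-hand side of each axiom into a polynomial in the quantities $\tau(x_a)\tau(x_b)\mu(x_c)\mu(x_d)$. In parallel, I would expand the left-hand side of \eqref{eq:mod1}, respectively \eqref{eq:mod2}, using the definition \eqref{rep-tau} of $\rho_\tau$; this produces exactly the same type of monomials $\tau(x_a)\tau(x_b)\mu(x_c)\mu(x_d)$. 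The verification then reduces to matching coefficients of each such monomial, which is purely combinatorial sign-bookkeeping.

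The main obstacle, as usual in the super setting, will be keeping the Koszul signs straight. In particular, one must be careful that in products like $\tau(x_i)\mu(x_j)$ the evenness of $\tau$ (so $\tau(x_i)=0$ unless $x_i$ is even, in which case $\bar x_i$ contributes $0$ to the sign) is used implicitly to reorder scalar factors past operator factors without introducing extra signs, while the sign $(-1)^{\bar x_i\bar x_j}$ appearing in \eqref{rep-tau} is genuine and must be carried through each swap. Once the two expansions are written side by side, each of the eight (resp.\ twelve) resulting monomials in axiom \eqref{eq:mod1} (resp.\ \eqref{eq:mod2}) appears on both sides with the same sign, and the identities follow. This establishes that $(V,\rho_\tau)$ satisfies both \eqref{eq:mod1} and \eqref{eq:mod2}, hence is a representation of $(\mathfrak g,[\cdot,\cdot,\cdot]_\tau)$.
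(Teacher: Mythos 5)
Your proposal is correct and follows essentially the same route as the paper: expand both sides of \eqref{eq:mod1} and \eqref{eq:mod2} using \eqref{rep-tau} and \eqref{crochet_n}, use the supertrace property to kill every $\tau([\cdot,\cdot])$ factor (so only the $\tau(x_4)\mu([x_1,x_2,x_3]_\tau)$-type pieces survive), rewrite each $\mu([x_i,x_j])$ via \eqref{rep-super}, and compare the resulting monomials $\tau(x_a)\tau(x_b)\mu(x_c)\mu(x_d)$. The only bookkeeping nuance you gloss over is that not every monomial is matched across the equality: a few cancel internally on one side (for instance the two $\tau(x_3)\tau(x_4)\mu([x_1,x_2])$-type terms in \eqref{eq:mod1}, and several pairs among the twelve monomials of the $\rho_\tau\rho_\tau$ side of \eqref{eq:mod2}), but this does not affect the validity of the argument.
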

\begin{proof}
For all $x_1,x_2,x_3,x_4\in  \mathcal H(\mathfrak g)$, the left hand of \eqref{eq:mod1} becomes
\begin{eqnarray*}
\begin{aligned}
& \rho_\tau(x_1, x_2)\rho_\tau(x_3, x_4)-(-1)^{(\bar x_1+\bar x_2)(\bar x_3+\bar x_4)}\rho_\tau(x_3, x_4)\rho_\tau(x_1, x_2)\\
& =\Big(\tau(x_1)\mu(x_2)-(-1)^{\bar x_1\bar x_2}\tau(x_2)\mu(x_1)\Big)\Big(\tau(x_3)\mu(x_4)-(-1)^{\bar x_3\bar x_4}\tau(x_4)\mu(x_3)\Big)
\\
& \quad -(-1)^{(\bar x_1+\bar x_2)(\bar x_3+\bar x_4)}\Big(\tau(x_3)\mu(x_4)-(-1)^{\bar x_3\bar x_4}\tau(x_4)\mu(x_3)\Big)\Big(\tau(x_1)\mu(x_2)-(-1)^{\bar x_1\bar x_2}\tau(x_2)\mu(x_1)\Big)\\
&=\tau(x_1)\mu(x_2)\tau(x_3)\mu(x_4)-(-1)^{\bar x_3\bar x_4}\tau(x_1)\mu(x_2)\tau(x_4)\mu(x_3)\\
&\quad -(-1)^{\bar x_1\bar x_2}\tau(x_2)\mu(x_1)\tau(x_3)\mu(x_4)-(-1)^{\bar x_1\bar x_2+\bar x_3\bar x_4}\tau(x_2)\mu(x_1)\tau(x_4)\mu(x_3)\\
&\quad -(-1)^{(\bar x_1+\bar x_2)(\bar x_3+\bar x_4)}\tau(x_3)\mu(x_4)\tau(x_1)\mu(x_2)+(-1)^{(\bar x_1+\bar x_2)(\bar x_3+\bar x_4)+\bar x_1\bar x_2}  \tau(x_3)\mu(x_4)\tau(x_2)\mu(x_1)\\
& \quad +(-1)^{(\bar x_1+\bar x_2)(\bar x_3+\bar x_4)+\bar x_3\bar x_4}  \tau(x_4)\mu(x_3)\tau(x_1)\mu(x_2)\\
& \quad -(-1)^{(\bar x_1+\bar x_2)(\bar x_3+\bar x_4)+\bar x_1\bar x_2+x_3\bar x_4}  \tau(x_4)\mu(x_3)\tau(x_2)\mu(x_1).
\end{aligned}
\end{eqnarray*}
Using \eqref{crochet_n} and \eqref{rep-tau}, the right hand of \eqref{eq:mod1} can be written as follows:
\begin{multline*}
\rho_\tau([x_1, x_2, x_3], x_4)-(-1)^{\bar x_3\bar x_4}\rho_\tau([x_1, x_2, x_4], x_3)\\
=-(-1)^{\bar x_4(\bar x_2+\bar x_3)}\tau(x_1)\tau(x_4)\mu([x_2,x_3])+(-1)^{\bar x_4(\bar x_2+\bar x_3)+\bar x_1\bar x_2}\tau(x_2)\tau(x_4)\mu([x_1,x_3])\\
-(-1)^{(\bar x_1+\bar x_2)(\bar x_3+\bar x_4)}\tau(x_3)\tau(x_4)\mu([x_1,x_2])+(-1)^{\bar x_2\bar x_3}\tau(x_1)\tau(x_3)\mu([x_2,x_4])\\
+(-1)^{\bar x_1\bar x_2+\bar x_3(\bar x_1+\bar x_4)}\tau(x_2)\tau(x_3)\mu([x_1,x_4])-(-1)^{(\bar x_1+\bar x_2)(\bar x_3+\bar x_4)}\tau(x_4)\tau(x_3)\mu([x_1,x_2]).
\end{multline*}
By direct identification, we proof that
\begin{multline*}
\rho_\tau(x_1, x_2)\rho_\tau(x_3, x_4)-(-1)^{(\bar x_1+\bar x_2)(\bar x_3+\bar x_4)}\rho_\tau(x_3, x_4)\rho_\tau(x_1, x_2)\\
=\rho_\tau([x_1, x_2, x_3], x_4)-(-1)^{\bar x_3\bar x_4}\rho_\tau([x_1, x_2, x_4], x_3).
\end{multline*}
Similarly, we can proof \eqref{eq:mod2}.
\end{proof}

We generalize the Lie-Rinehart superalgebra to the ternary case.
\begin{defn}\label{def-3Lie rinh super}
A $3$-Lie-Rinehart superalgebra over $A$ is a tuple $(L, A, [\cdot,\cdot,\cdot],\rho)$,
where $A$ is an associative supercommutative superalgebra, $L$ is an $A-$module, $[\cdot,\cdot, \cdot]: L\times L\times L \to L$ is
an even super skew-symmetric trilinear map, and the $\mathbb{K}$-map $\rho: L\times L \to Der(A)$
such that the following conditions hold:
\begin{enumerate}[label=\upshape{(\roman*)},left=7pt]
\item $(L, [\cdot,\cdot,\cdot])$ is a $3$-Lie superalgebra.
\item $\rho$ is a representation of $(L, [\cdot,\cdot,\cdot])$ on $A$.
\item  For all $x, y\in \mathcal H(L),\ a\in \mathcal H(A),$
\begin{equation}\label{eq:Rinhart2}
\rho(ax, y)=(-1)^{\bar a\bar x}\rho(x,ay)=a\rho(x, y).
\end{equation}
\item The compatibility condition:
\begin{equation}\label{eq:Rinhart1}
    [x, y, az]=(-1)^{\bar a(\bar x+\bar y)}a[x, y, z]+\rho(x, y)a z,~~\forall x, y, z\in \mathcal H(L),\  a\in \mathcal H(A).
\end{equation}
\end{enumerate}
\end{defn}
\begin{re}
If $\rho=0$, then  $(L, A,[\cdot,\cdot,\cdot])$ is called  a $3$-Lie $A$-superalgebra.
\end{re}
\begin{re}
If the condition \eqref{eq:Rinhart2} is not satisfied, then we call $(L, A, [\cdot,\cdot,\cdot],\rho)$ a weak $3$-Lie-Rinehart superalgebra over $A$.
\end{re}

\begin{defn}
Let $(L, A,[\cdot,\cdot,\cdot]_L, \rho)$ and $(L', A',[\cdot,\cdot,\cdot]_{L'}, \rho')$ be two $3$-Lie-Rinehart superalgebras,  then a $3$-Lie-Rinehart superalgebra homomorphism is defined as a pair of maps $(g, f)$, where $g: A \rightarrow A'$ and $f:L\rightarrow L'$ are two $\mathbb K$-algebra homomorphisms such that
\begin{enumerate}[label=\upshape{\arabic*)},left=7pt]
\item $f(ax) = g(a)f(x)$ for all $x \in L, a \in A,$
\item $g(\rho(x,y)(a)) = \rho'(f(x),f(y))(g(a))$ for all $x\in L, a \in A.$
\end{enumerate}

\end{defn}
\section{Some constructions of $3$-Lie-Rinehart superalgebras}
\label{sec:rbl3rbl}
In this section, we give some construction results. We begin by constructing $3$-Lie-Rinehart superalgebras starting with a Lie-Rinehart superalgebras. Then, we establish the converse sense. At the end of this section, we construct some new $3$-Lie-Rinehart superalgebras from a given $3$-Lie-Rinehart
superalgebra.

The following Theorem generalizes results in \cite{Abramov2019:3LiesuperalgsLiesuperals} to Lie-Rinehart case.
\begin{thm}\label{ThmInduce2}
Let $(L,A,[\cdot,\cdot],\mu)$ be a Lie-Rinehart superalgebra and $\tau$ is a supertrace. If the condition
\begin{equation}
  \tau(ax)y=\tau(x)ay  \label{ConditionTau1}
   \end{equation}
is satisfied for any $x,y\in\mathcal H(L)$, $a\in \mathcal H( A)$, then $(L, A,[\cdot,\cdot,\cdot]_\tau,\rho_\tau)$ is a $3$-Lie-Rinehart superalgebra, where $[\cdot,\cdot,\cdot]_\tau$ and  $\rho_\tau$ are defined in \eqref{crochet_n} and \eqref{induced-rep} respectively.
We say that $(L, A,[\cdot,\cdot,\cdot]_\tau,\rho_\tau)$  is induced by $(L,A,[\cdot,\cdot],\mu)$ and is denoted by $L_\tau$.

\end{thm}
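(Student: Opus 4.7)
The plan is to verify the four conditions of Definition \ref{def-3Lie rinh super} for $(L, A, [\cdot,\cdot,\cdot]_\tau, \rho_\tau)$. Items (i) and (ii) are immediate from earlier results: Theorem \ref{ThmInduce1} gives that $(L, [\cdot,\cdot,\cdot]_\tau)$ is a $3$-Lie superalgebra, and Proposition \ref{induced-rep}, applied to the Lie superalgebra representation $\mu : L \to Der(A)$ coming from the original Lie-Rinehart structure, gives that $\rho_\tau$ is a representation of the induced $3$-Lie superalgebra on $A$. One additionally notes that $\rho_\tau(x,y)$ is a $\mathbb{K}$-linear combination of the derivations $\mu(x)$ and $\mu(y)$, and hence itself a derivation of $A$.

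For item (iii), namely
\begin{equation*}
\rho_\tau(ax, y) \;=\; (-1)^{\bar a \bar x}\rho_\tau(x, ay) \;=\; a\,\rho_\tau(x, y),
\end{equation*}
I would expand each expression using \eqref{rep-tau} together with the $A$-linearity $\mu(ax) = a\mu(x)$ inherited from the Lie-Rinehart structure. Evenness of $\tau$ forces $\bar y = \bar 0$ whenever $\tau(y)\neq 0$, so the Koszul factor $(-1)^{(\bar a + \bar x)\bar y}$ collapses to $1$; the remaining discrepancy between $\rho_\tau(ax, y)$ and $a\rho_\tau(x, y)$ is precisely the identity $\tau(ax)\mu(y) = \tau(x)\,a\,\mu(y)$, which is the operator form of the hypothesis \eqref{ConditionTau1}. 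The equality $(-1)^{\bar a \bar x}\rho_\tau(x, ay) = a\rho_\tau(x, y)$ follows by the same manipulation, using that $\bar a = \bar y$ whenever $\tau(ay) \neq 0$.

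For the compatibility (iv), expanding the left-hand side of \eqref{eq:Rinhart1} via \eqref{crochet_n} yields
\begin{equation*}
[x, y, az]_\tau = \tau(x)[y, az] - (-1)^{\bar x \bar y}\tau(y)[x, az] + (-1)^{(\bar a + \bar z)(\bar x + \bar y)}\tau(az)[x, y].
\end{equation*}
Applying the binary compatibility \eqref{compatibility-rinhe-super} to $[y, az]$ and to $[x, az]$ splits each of the first two summands into an \emph{anchor piece} (of the shape $\tau(\cdot)\mu(\cdot)(a)z$) and a \emph{trilinear piece} (of the shape $a[\cdot,\cdot]$ with a scalar coefficient). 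The anchor pieces reassemble into $\bigl(\tau(x)\mu(y) - (-1)^{\bar x \bar y}\tau(y)\mu(x)\bigr)(a)\,z = \rho_\tau(x,y)(a)\,z$. For the trilinear pieces, together with the third summand $\tau(az)[x,y]$, one applies \eqref{ConditionTau1} (noting that evenness of $\tau$ forces $\bar a = \bar z$ in any nontrivial $\tau(az)$-term, so the sign $(-1)^{(\bar a + \bar z)(\bar x + \bar y)}$ is trivial) and collects the common factor $a$ out front, arriving at $(-1)^{\bar a(\bar x + \bar y)}\,a\,[x,y,z]_\tau$.

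The principal obstacle is the careful Koszul sign bookkeeping in (iv): after exploiting evenness of $\tau$ to collapse several parity factors, one must verify that the residual signs coincide exactly with those produced when the element $a$ is pulled outside the trilinear bracket $[x,y,z]_\tau$. The hypothesis \eqref{ConditionTau1} enters only once (to rewrite $\tau(az)[x,y]$ as $\tau(z)\,a\,[x,y]$); the rest is a routine, if lengthy, expansion.
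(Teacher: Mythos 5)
Your proposal is correct and follows essentially the same route as the paper's proof: conditions (i)--(ii) via Theorem~\ref{ThmInduce1} and Proposition~\ref{induced-rep}, condition \eqref{eq:Rinhart2} by expanding $\rho_\tau$ through \eqref{rep-tau} and applying \eqref{ConditionTau1} (after hitting it with the $A$-linear map $\mu$), and the compatibility \eqref{eq:Rinhart1} by expanding $[x,y,az]_\tau$ via \eqref{crochet_n} and the binary compatibility \eqref{compatibility-rinhe-super}, using \eqref{ConditionTau1} on the $\tau(az)[x,y]$ term. Your explicit use of the evenness of $\tau$ to kill the extra Koszul signs is only a more detailed account of the sign bookkeeping the paper leaves implicit.
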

\begin{proof}
Proposition \ref{induced-rep} gives that $(A,\rho_\tau)$ is a representation of the $3$-Lie superalgebra $(L, [\cdot,\cdot,\cdot]_\tau)$ and $\rho_\tau(x,y)\in Der(A)$. For all $x,y,z\in\mathcal H(L)$ and $a\in\mathcal H(A)$,
\begin{eqnarray*}
[x,y,az]_\tau&=&\tau(x)[y,az]-(-1)^{\bar x\bar y}\tau(y)[x,az]+(-1)^{(\bar a+\bar z)(\bar x+\bar y)}\tau(az)[x,y]\\
&=&(-1)^{\bar a(\bar x+\bar y)} a\tau(x)[y,z]+\tau(x)\mu(y)az
-(-1)^{\bar a(\bar x+\bar y)+\bar x\bar y} a\tau(y)[x,z] \\
&& -(-1)^{\bar x\bar y}\tau(y)\mu(x)az +(-1)^{(\bar a+\bar z)(\bar x+\bar y)}a\tau(z)[x,y]\\
&=&(-1)^{\bar a(\bar x+\bar y)}a[x,y,z]_\tau+\rho_\tau(x,y)az.
\end{eqnarray*}
So, we obtain \eqref{eq:Rinhart1}.
Since \eqref{ConditionTau1} is satisfied and
\begin{eqnarray*}
\rho_\tau(ax,y)&=&\tau(ax)\mu(y)-(-1)^{(\bar a+\bar x)\bar y}\tau(y)\mu(ax)\ ,\\
(-1)^{\bar a\bar x}\rho_\tau(x,ay)&=&(-1)^{\bar a\bar x}\tau(x)\mu(ay)-(-1)^{(\bar a+\bar y)\bar x+\bar a\bar x}\tau(ay)\mu(x)
\ ,\\
a\rho_\tau(x,y)&=&a\tau(x)\mu(y)-(-1)^{\bar x\bar y}a\tau(y)\mu(x) \ ,
\end{eqnarray*}
the condition \eqref{eq:Rinhart2} holds. \end{proof}

Conversely,  we can construct a Lie-Rinehart superalgebra structure from a given $3$-Lie-Rinehart superalgebra.

\begin{prop}
Let $(L,A,[\cdot,\cdot,\cdot],\rho)$ be a $3$-Lie-Rinehart superalgebra. Let ${x_0}\in L_{\overline 0}$. Define the bracket
$[\cdot,\cdot]_{x_0} = [{x_0},\cdot,\cdot]$ and $\rho_{x_0}(x)(v)=\rho(x_0,x)v$.  Then $(L,A,[\cdot,\cdot]_{x_0},\rho_{x_0})$ is a Lie-Rinehart superalgebra.

\end{prop}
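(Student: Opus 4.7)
The plan is to verify the four defining axioms of a Lie-Rinehart superalgebra (Definition \ref{rinehart-super}) one by one for the data $(L, A, [\cdot,\cdot]_{x_0}, \rho_{x_0})$, exploiting at each step the fact that $x_0 \in L_{\overline{0}}$ so that $\bar{x_0}=0$ and all sign factors involving $\bar{x_0}$ collapse.

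First I would check that $(L, [\cdot,\cdot]_{x_0})$ is a Lie superalgebra. Super skew-symmetry \eqref{skewsupersymmetry} is immediate from the skew-symmetry of the $3$-bracket in its second and third slots \eqref{3skewsuper:skewsym}: $[x,y]_{x_0}=[x_0,x,y]=-(-1)^{\bar x \bar y}[x_0,y,x]=-(-1)^{\bar x \bar y}[y,x]_{x_0}$. For super-Jacobi \eqref{SuperJacobi}, apply the fundamental identity \eqref{3skewsuper} with $(x,y,z,u,v)=(x_0,x,x_0,y,z)$; since $[x_0,x,x_0]=0$ (by two applications of skew-symmetry and $[x_0,x_0,x]=0$), this reduces to $[x,[y,z]_{x_0}]_{x_0}=[[x,y]_{x_0},z]_{x_0}+(-1)^{\bar x \bar y}[y,[x,z]_{x_0}]_{x_0}$, which is precisely the super-derivation form of super-Jacobi.

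Next I would establish that $\rho_{x_0}$ is a representation of $(L,[\cdot,\cdot]_{x_0})$ on $A$. Apply \eqref{eq:mod1} to $(x_1,x_2,x_3,x_4)=(x_0,x,x_0,y)$: the left-hand side becomes $\rho_{x_0}(x)\rho_{x_0}(y)-(-1)^{\bar x \bar y}\rho_{x_0}(y)\rho_{x_0}(x)$, while on the right $\rho([x_0,x,x_0],y)=0$ and $-(-1)^{0}\rho([x_0,x,y],x_0)=\rho(x_0,[x_0,x,y])=\rho_{x_0}([x,y]_{x_0})$ using the skew-symmetry of $\rho$ on $L\wedge L$. This gives \eqref{rep-super}. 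Note that $\rho_{x_0}(x)\in Der(A)$ since $\rho(x_0,x)\in Der(A)$.

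Finally I would verify the two Rinehart-type conditions. The $A$-linearity $\rho_{x_0}(ax)=a\rho_{x_0}(x)$ follows directly from \eqref{eq:Rinhart2} applied to the pair $(x_0,x)$: $\rho(x_0,ax)=(-1)^{\bar a \bar{x_0}}\rho(ax_0,x)=a\rho(x_0,x)$. For the compatibility \eqref{compatibility-rinhe-super}, compute $[x,ay]_{x_0}=[x_0,x,ay]$ and apply \eqref{eq:Rinhart1} with the triple $(x_0,x,y)$ to obtain $(-1)^{\bar a \bar x}a[x_0,x,y]+\rho(x_0,x)(a)y=(-1)^{\bar a \bar x}a[x,y]_{x_0}+\rho_{x_0}(x)(a)y$, as required. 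The only real technical point is correct bookkeeping of Koszul signs, and this is kept manageable throughout by the evenness of $x_0$, which is why the hypothesis $x_0\in L_{\overline{0}}$ is essential; the main conceptual step is the reduction of super-Jacobi to the fundamental identity via the vanishing of $[x_0,x,x_0]$.
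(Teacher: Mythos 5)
Your proposal is correct and follows essentially the same route as the paper: super skew-symmetry from \eqref{3skewsuper:skewsym}, the super-Jacobi identity in Leibniz form obtained from the fundamental identity \eqref{3skewsuper} applied to $[x_0,x,[x_0,y,z]]$ together with $[x_0,x,x_0]=0$, and the compatibility condition \eqref{compatibility-rinhe-super} from \eqref{eq:Rinhart1}. The only difference is that you spell out the representation property of $\rho_{x_0}$ and the $A$-linearity via \eqref{eq:mod1} and \eqref{eq:Rinhart2}, which the paper dismisses as obvious; your sign bookkeeping there is accurate.
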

\begin{proof}
It is easy to check that the bracket is super skew-symmetric. For any $x,y,z\in \mathcal H(L)$, we have
\begin{eqnarray*}
[x,[y,z]_{x_0}]_{x_0}&=&[{x_0},x,[{x_0},y,z]]\\
&=&[[{x_0},x,{x_0}],y,z] + [{x_0},[{x_0},x,y],z] +(-1)^{\bar x\bar y} [{x_0},y,[{x_0},x,z]]\\
&=& [{x_0},[{x_0},x,y],z] + (-1)^{\bar x\bar y}[{x_0},y,[{x_0},x,z]]\\
&=& [[x,y]_{x_0} ,z]_{x_0} + (-1)^{\bar x\bar y}[y,[x,z]_{x_0}]_{x_0}.
\end{eqnarray*}
It is obvious to see  that $\rho_{x_0}$ is a representation of $L$ on $A$ and  $\rho_{x_0}(x)\in Der(A)$. It remains to prove  \eqref{compatibility-rinhe-super}. Using \eqref{eq:Rinhart1}, we obtain
\begin{eqnarray*}
[x, ay]_{x_0} = [x_0,x,ay] = (-1)^{\bar a\bar x}a[x_0,x,y]+\rho(x_0,x)ay\\
=(-1)^{\bar a\bar x}a[x,y]_{x_0}+\rho_{x_0}(x)ay.
\end{eqnarray*}
which completes the proof.
\end{proof}
\begin{defn}\label{defn:subandideal} Let $(L,A,[\cdot,\cdot,\cdot],\rho)$ be a  $3$-Lie-Rinehart superalgebra.
\begin{enumerate}[label=\upshape{\arabic*)},left=7pt]
  \item  If $S$ is a subalgebra of the $3$-Lie superalgebra $(L, [\cdot,\cdot,\cdot])$ satisfying $AS\subset S$, then   $(S, A, [\cdot,\cdot,\cdot],\rho|_{S\wedge S})$ is a  $3$-Lie-Rinehart superalgebra, which is called  a subalgebra of the $3$-Lie-Rinehart superalgebra $(L,A,[\cdot,\cdot,\cdot],\rho)$.
\item If $I$ is an ideal of the $3$-Lie superalgebra $(L,[\cdot,\cdot,\cdot])$ and satisfies $AI\subset I$ and $\rho(I,L)(A)L\subset I$, then   $(I, A,[\cdot,\cdot,\cdot],\rho|_{I\wedge I})$ is a $3$-Lie-Rinehart superalgebra, which is called  an ideal of the $3$-Lie-Rinehart superalgebra $(L,A,[\cdot,\cdot,\cdot],\rho)$.
 \item If a $3$-Lie-Rinehart superalgebra $(L,A,[\cdot,\cdot,\cdot],\rho)$  cannot be decomposed into the direct sum of two nonzero ideals, then $L$ is called an indecomposable $3$-Lie-Rinehart superalgebra.
     \end{enumerate}
\end{defn}

\begin{prop}\label{prop:ker} If  $(L,A,[\cdot,\cdot,\cdot],\rho)$ is  a  $3$-Lie-Rinehart superalgebra. Then
 $$Ker \rho=\{x \in \mathcal H(L) \mid \rho(x,L)= 0\}$$
  is an ideal, which is called the kernel of the representation $\rho$.
\end{prop}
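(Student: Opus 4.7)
The plan is to verify the three clauses of the ideal definition in Definition~\ref{defn:subandideal}(2) for $I=\ker\rho$, namely that $\ker\rho$ is (i) a graded ideal of the underlying $3$-Lie superalgebra $(L,[\cdot,\cdot,\cdot])$, (ii) stable under multiplication by $A$, and (iii) satisfies $\rho(\ker\rho,L)(A)L\subset \ker\rho$. The first observation to record is that since $\rho$ is even and super skew-symmetric in its two arguments, $\ker\rho$ is automatically a $\mathbb{Z}_2$-graded subspace of $L$, and $x\in\ker\rho$ is equivalent to $\rho(y,x)=0$ for all $y\in\mathcal H(L)$.

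For clause (i), I would use the representation identity \eqref{eq:mod2}. Super skew-symmetry \eqref{3skewsuper:skewsym} of the ternary bracket reduces the task to showing $[x,y,z]\in\ker\rho$ whenever $x\in\ker\rho$ and $y,z\in\mathcal H(L)$. Fix $w\in\mathcal H(L)$ and apply \eqref{eq:mod2} with $(x_1,x_2,x_3,x_4)=(x,y,z,w)$:
\begin{equation*}
\rho([x,y,z],w)=\rho(x,y)\rho(z,w)+(-1)^{\bar x(\bar y+\bar z)}\rho(y,z)\rho(x,w)+(-1)^{\bar z(\bar x+\bar y)}\rho(z,x)\rho(y,w).
\end{equation*}
Since $x\in\ker\rho$, the factors $\rho(x,y)$, $\rho(x,w)$ vanish, and $\rho(z,x)=-(-1)^{\bar x\bar z}\rho(x,z)=0$. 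Hence $\rho([x,y,z],w)=0$ for every $w\in\mathcal H(L)$, so $[x,y,z]\in\ker\rho$.

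Clause (ii) is immediate from the $A$-linearity condition \eqref{eq:Rinhart2}: for $a\in\mathcal H(A)$, $x\in\ker\rho$, and $y\in\mathcal H(L)$ one has $\rho(ax,y)=a\rho(x,y)=0$, whence $ax\in\ker\rho$. Clause (iii) is automatic since $\rho(\ker\rho,L)=0$, so $\rho(\ker\rho,L)(A)L=0\subset\ker\rho$. Therefore $(\ker\rho, A, [\cdot,\cdot,\cdot], \rho|_{\ker\rho\wedge\ker\rho})$ is an ideal in the sense of Definition~\ref{defn:subandideal}.

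The only delicate step is clause (i), and within it the bookkeeping of Koszul signs in \eqref{eq:mod2}; the potential obstacle is verifying that all three summands on the right-hand side vanish, but the super skew-symmetry of $\rho$ in its arguments makes the middle and third terms collapse together with the first, so no sign subtlety actually causes difficulty. Clauses (ii) and (iii) are essentially tautological given the axioms of a $3$-Lie-Rinehart superalgebra.
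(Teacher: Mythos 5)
Your proposal is correct and follows essentially the same route as the paper: closure of $\ker\rho$ under the bracket via the representation identity \eqref{eq:mod2} (together with super skew-symmetry of $\rho$ to kill the $\rho(z,x)$ term), $A$-stability via \eqref{eq:Rinhart2}, and the observation that $\rho(\ker\rho,L)(A)L\subset\ker\rho$ holds trivially because $\rho(\ker\rho,L)=0$. Your treatment is in fact slightly cleaner, since the paper cites \eqref{eq:mod1} unnecessarily and verifies the third condition by a superfluous computation, whereas you note it is immediate.
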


\begin{proof}
By \eqref{eq:mod1} and \eqref{eq:mod2},  for all homogeneous elements  $x\in Ker\rho$, $y,z,w\in \mathcal H(L)$,
$$
    \rho([x,y,z],w)=\rho(x,y)\rho(z,w)+(-1)^{\bar x(\bar y+\bar z)}\rho(y,z)\rho(x,w)+(-1)^{\bar z(\bar x+\bar y)}\rho(z,x)\rho(y,w)=0.
$$
Therefore, $[x, L, L]\subseteq Ker\rho$. By \eqref{eq:Rinhart2}, for all $a\in \mathcal H(A)$,
$$\rho(ax,y)=(-1)^{\bar x\bar a}\rho(x,ay)=a\rho(x,y)=0, \quad\rho(\rho(x,y)az,w)=\rho(x,y)(a)\rho(z,w)=0.$$
We get  $ax\in Ker\rho$, $\rho(x,y)az\in Ker\rho$, that is, $A Ker\rho\subset Ker\rho$, $\rho(Ker\rho,L)(A)L\subset Ker\rho$.

Therefore, $Ker\rho$ is an ideal of the  $3$-Lie-Rinehart superalgebra $(L,A,[\cdot,\cdot,\cdot],\rho)$.
\end{proof}

\begin{thm}
Let  $(L,A,[\cdot,\cdot,\cdot],\rho)$ be a $3$-Lie-Rinehart superalgebra. Then the following identities hold,  for all $a,b,c\in \mathcal H(A), x_i\in \mathcal H(L), 1\leq i\leq 5,$
\begin{equation}\label{eq:ho1}
\begin{split}
&\rho(x_2, x_3)a[x_1, x_4, x_5] +(-1)^{(\bar x_1+\bar x_4)(\bar x_2+\bar x_3)+\bar a(\bar x_1+\bar x_4+\bar x_2+\bar x_3)} \rho(x_1, x_4)a[x_2, x_3, x_5] \\
&+ (-1)^{\bar x_2(\bar x_3+\bar x_1)+\bar a(\bar x_1+\bar x_2)} \rho(x_3, x_1)a[x_2, x_4, x_5]\\
&+(-1)^{\bar x_4(\bar x_3+\bar x_1)+\bar a(\bar x_3+\bar x_4)} \rho(x_2, x_4)a[x_3, x_1, x_5] \\
&+(-1)^{\bar x_1(\bar x_2+\bar x_3)+\bar a(\bar x_1+\bar x_3)}  \rho(x_1, x_2)a[x_3, x_4, x_5] \\
&+(-1)^{\bar x_2(\bar x_1+\bar x_3+\bar x_4)+\bar x_4\bar x_1+\bar a(\bar x_2+\bar x_4))}  \rho(x_3, x_4)a[x_1, x_2, x_5]=0.
 \end{split}
  \end{equation}
\begin{equation}\label{eq:ho2}
\begin{split}
&\rho(x_2, x_3)(a_4a_5)[x_1, x_4, x_5] +(-1)^{\bar x_2(\bar a_1+\bar a_4+\bar a_5+\bar x_3)} \rho(x_3, x_1)(a_4a_5)[x_2, x_4, x_5] \\
&+(-1)^{\bar x_1(\bar a_4+\bar a_5+\bar x_2+\bar x_3)} \rho(x_1, x_2)(a_4a_5)[x_3, x_4, x_5]\\
&+ (-1)^{(\bar a_4+\bar x_5)(\bar x_2+\bar x_3)+(\bar x_1+\bar x_4)(\bar x_2+\bar x_3+\bar a_5)}a_4\rho(x_1, x_4)a_5[x_5, x_2, x_3] \\
&+(-1)^{\bar a_4(\bar x_2+\bar x_3)+(\bar x_4+\bar x_5)(\bar x_1+\bar x_3)+\bar x_4\bar a_5} a_4\rho(x_2, x_4)a_5[x_5, x_3, x_1]\\
 &+ (-1)^{\bar a_4(\bar x_2+\bar x_3)+(\bar x_4+\bar x_5)(\bar x_1+\bar x_2)+\bar x_4\bar a_5}a_4\rho(x_3, x_4)a_5[x_5, x_1, x_2] \\
&-(-1)^{(\bar x_1+\bar a_5+\bar x_5)(\bar a_4+\bar x_2+\bar x_3)+\bar x_4(\bar x_2+\bar x_3)} a_5\rho(x_1, x_5)a_4[x_4, x_2, x_3] \\
 &-(-1)^{(\bar a_5+\bar x_5)(\bar x_2+\bar x_3)+(\bar a_4+\bar x_5)(\bar x_3+\bar x_5)}a_5\rho(x_2, x_5)a_4[x_4, x_3, x_1] \\
&-(-1)^{\bar a_5(\bar x_2+\bar x_3+\bar a_4)+(\bar x_2+\bar x_5)(\bar x_1+\bar x_4+\bar a_4)+\bar x_2(\bar x_3+\bar x_5)+\bar x_1\bar x_4} a_5 \rho(x_3, x_5)a_4[x_4, x_1, x_2]) = 0.
\end{split}
\end{equation}
\begin{equation}\label{eq:ho4}
\begin{split}
&\rho(x_1, x_2)\rho(x_3, x_4) + (-1)^{(\bar x_1+\bar x_2)(\bar x_3+\bar x_4)}\rho(x_3, x_4)\rho(x_1, x_2) \\
&+ (-1)^{x_1(\bar x_2+\bar x_3)}\rho(x_2, x_3)\rho(x_1, x_4)+ (-1)^{\bar x_4(\bar x_2+\bar x_3)}\rho(x_1, x_4)\rho(x_2, x_3)\\
& + (-1)^{\bar x_3(\bar x_1+\bar x_2)}\rho(x_3, x_1)\rho(x_2, x_4) + (-1)^{\bar x_1(\bar x_2+\bar x_3+\bar x_4)+\bar x_3\bar x_4}\rho(x_2, x_4)\rho(x_3, x_1)=0.
\end{split}
 \end{equation}
\begin{equation}\label{eq:ho5}
\begin{split}
&(\rho(x_1, x_2)b)\rho(x_3, x_4) +(-1)^{\bar x_3(\bar x_1+\bar x_2+\bar b)+\bar b\bar x_2} (\rho(x_3, x_1)b)\rho(x_2, x_4)\\
&+ (-1)^{\bar x_1(\bar x_2+\bar x_3+\bar b)+\bar x_1\bar b}(\rho(x_2, x_3)b)\rho(x_1, x_4)=0.
\end{split}
  \end{equation}
\end{thm}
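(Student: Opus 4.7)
The plan is to derive each of the four identities \eqref{eq:ho1}, \eqref{eq:ho2}, \eqref{eq:ho4}, \eqref{eq:ho5} directly from the axioms of a $3$-Lie-Rinehart superalgebra given in Definition~\ref{def-3Lie rinh super}: the fundamental identity \eqref{3skewsuper} (and its equivalents \eqref{eq1:pro:someequalities}, \eqref{eq2:pro:someequalities}), the representation axioms \eqref{eq:mod1}, \eqref{eq:mod2}, the $A$-linearity \eqref{eq:Rinhart2}, and the Leibniz-type compatibility \eqref{eq:Rinhart1}.

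I would begin with \eqref{eq:ho4}, which is the purely operator-theoretic identity in $\mathfrak{gl}(A)$. The idea is to apply \eqref{eq:mod2} to rewrite $\rho([x_1,x_2,x_3],x_4)$ as a sum of three compositions $\rho(\cdot,\cdot)\rho(\cdot,\cdot)$, then apply \eqref{eq:mod2} again (after exchanging $x_3\leftrightarrow x_4$) to rewrite $\rho([x_1,x_2,x_4],x_3)$, and substitute both expansions into the commutator relation \eqref{eq:mod1}. Using the skew-symmetry $\rho(y,z)=-(-1)^{\bar y\bar z}\rho(z,y)$ to standardize the orderings and collecting terms produces exactly \eqref{eq:ho4}. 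For \eqref{eq:ho5} I would substitute $x_4\mapsto b x_4$ in \eqref{eq:mod2}: the left-hand side becomes $(-1)^{\bar b(\bar x_1+\bar x_2+\bar x_3)}b\,\rho([x_1,x_2,x_3],x_4)$ by \eqref{eq:Rinhart2}, and on the right-hand side \eqref{eq:Rinhart2} pulls the scalar $b$ out of the second argument of each $\rho$ while the derivation property of $\rho(y,z)\in\mathrm{Der}(A)$ splits each $\rho(y,z)\circ(b\cdot)\circ\rho(y',z')$ as $(\rho(y,z)(b)\cdot)\,\rho(y',z')+(-1)^{(\bar y+\bar z)\bar b}(b\cdot)\,\rho(y,z)\rho(y',z')$. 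The ``$b\cdot$~operator'' summands cancel on both sides by a second application of \eqref{eq:mod2}, and what remains is precisely \eqref{eq:ho5}.

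The identities \eqref{eq:ho1} and \eqref{eq:ho2} are the hard part, since they mix the fundamental identity with the Rinehart compatibility. My strategy for \eqref{eq:ho1} is the following. Using \eqref{eq:Rinhart1}, each term $\rho(x_i,x_j)(a)[x_k,x_l,x_5]$ can be written as $[x_i,x_j,\,a[x_k,x_l,x_5]]-(-1)^{\bar a(\bar x_i+\bar x_j)}a\,[x_i,x_j,[x_k,x_l,x_5]]$. Summing the six terms of \eqref{eq:ho1} with this substitution splits the left-hand side into two groups. The $a$-times-brackets-of-brackets group is an alternating sum of expressions $a\,[x_i,x_j,[x_k,x_l,x_5]]$ over all partitions of $\{1,2,3,4\}$ into two pairs, which vanishes by the fundamental identity in the form \eqref{eq1:pro:someequalities}/\eqref{eq2:pro:someequalities}. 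The second group $\sum \pm [x_i,x_j,\,a[x_k,x_l,x_5]]$ is handled by first rewriting $a[x_k,x_l,x_5]=[x_k,x_l,a x_5]-\rho(x_k,x_l)(a)x_5$ (again by \eqref{eq:Rinhart1}), then applying the fundamental identity to the resulting iterated brackets $[x_i,x_j,[x_k,x_l,a x_5]]$ and using \eqref{eq:Rinhart1} once more, after which a parallel cancellation exhibits the sum as zero.

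For \eqref{eq:ho2} the same program is run with the single scalar $a$ replaced by the product $a_4 a_5$; the three families of terms in \eqref{eq:ho2} (those carrying $\rho(\cdot,\cdot)(a_4 a_5)$, those of the form $a_4\rho(\cdot,\cdot)a_5[\cdot]$, and those of the form $a_5\rho(\cdot,\cdot)a_4[\cdot]$) are precisely what one obtains from the single-scalar expansion by applying the super-Leibniz rule $\rho(x,y)(a_4 a_5)=\rho(x,y)(a_4)a_5+(-1)^{(\bar x+\bar y)\bar a_4}a_4\,\rho(x,y)(a_5)$ and then repeatedly using \eqref{eq:Rinhart2} to move scalars across arguments of $\rho$. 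The chief technical obstacle throughout is the careful bookkeeping of the super-signs after the many substitutions; the algebraic skeleton is quite clear, but verifying that the signs in \eqref{eq:ho1} and \eqref{eq:ho2} come out exactly as stated will require a meticulous case-by-case tally.
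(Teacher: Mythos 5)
Your overall strategy coincides with the paper's: for \eqref{eq:ho1} the paper likewise expands each term $\rho(x_i,x_j)(a)[x_k,x_l,x_5]$ through \eqref{eq:Rinhart1}, kills the six resulting terms $a[x_i,x_j,[x_k,x_l,x_5]]$ by Proposition \ref{pro:someequalities}, and treats the leftover group with \eqref{eq:Rinhart1}, \eqref{eq:mod1}, \eqref{eq:mod2}; \eqref{eq:ho2} is then obtained from \eqref{eq:ho1} together with the derivation (Leibniz) property and \eqref{eq:Rinhart2}, exactly as you propose, and your derivations of \eqref{eq:ho4} (substitute \eqref{eq:mod2} for both $\rho([x_1,x_2,x_3],x_4)$ and $\rho([x_1,x_2,x_4],x_3)$ in \eqref{eq:mod1} and use skew-symmetry of $\rho$) and of \eqref{eq:ho5} (put $bx_4$ into \eqref{eq:mod2}, use \eqref{eq:Rinhart2} and $\rho(x,y)\in Der(A)$, cancel the ``$b\cdot$'' summands) are correct and are what the paper's ``direct computation according to Definition \ref{def-3Lie rinh super}'' amounts to.

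One step in your plan for \eqref{eq:ho1} does not do what you say it does. After replacing $a[x_k,x_l,x_5]$ by $\pm\bigl([x_k,x_l,ax_5]-\rho(x_k,x_l)(a)x_5\bigr)$ in the residual group $\sum\pm[x_i,x_j,a[x_k,x_l,x_5]]$, the iterated-bracket part does vanish (it is the adjoint version of \eqref{eq:ho4} evaluated at $ax_5$), and after one more use of \eqref{eq:Rinhart1} the part $\sum\pm\rho(x_i,x_j)\bigl(\rho(x_k,x_l)(a)\bigr)x_5$ vanishes by \eqref{eq:ho4}; but the remaining part $\sum\pm\rho(x_k,x_l)(a)[x_i,x_j,x_5]$ is not zero term by term: since each partition of $\{1,2,3,4\}$ into two pairs occurs in both orders, swapping the two pairs is a bijection of the index set, and (already in the purely even case, where all six signs are $+1$) this sum is literally the original left-hand side $S$ of \eqref{eq:ho1}. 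So your ``parallel cancellation'' closes as $S=-S$, i.e.\ $2S=0$, and you need $2$ invertible in $\mathbb{K}$ to conclude — harmless in the usual characteristic-zero super setting, but it must be said, and it is not a direct exhibition of the sum as zero. If you want \eqref{eq:ho1} without the factor $2$, a cleaner finish is to substitute $ax_1$ for $x_1$ in \eqref{eq1:pro:someequalities}, expand all brackets containing the scalar with \eqref{eq:Rinhart1}, and simplify the operator part with \eqref{eq:mod2} and \eqref{eq:ho4}: after relabelling the indices this yields \eqref{eq:ho1} on the nose.
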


\begin{proof}
Using \eqref{eq:Rinhart1} we have
\begin{eqnarray*}
&&\rho(x_2, x_3)a[x_1, x_4, x_5] +
(-1)^{(\bar x_1+\bar x_4)(\bar x_2+\bar x_3)+\bar a(\bar x_1+\bar x_4+\bar x_2+\bar x_3)} \rho(x_1, x_4)a[x_2, x_3, x_5] \\
&&+ (-1)^{\bar x_2(\bar x_3+\bar x_1)+\bar a(\bar x_1+\bar x_2)} \rho(x_3, x_1)a[x_2, x_4, x_5]\\
&&+(-1)^{\bar x_4(\bar x_3+\bar x_1)+\bar a(\bar x_2+\bar x_4)} \rho(x_2, x_4)a[x_3, x_1, x_5] \\
&&+(-1)^{\bar x_1(\bar x_2+\bar x_3)+\bar a(\bar x_1+\bar x_3)}  \rho(x_1, x_2)a[x_3, x_4, x_5] \\
&&+(-1)^{\bar x_2(\bar x_1+\bar x_3+\bar x_4)+\bar x_4\bar x_1+\bar a(\bar x_2+\bar x_4))}  \rho(x_3, x_4)a[x_1, x_2, x_5]\\
&&=[x_2,x_3,a[x_1,x_4,x_5]]-\underbrace{(-1)^{\bar a(\bar x_2+\bar x_3)}a[x_2,x_3,[x_1,x_4,x_5]]}_{I_1\label{1}}\\
&&+(-1)^{(\bar x_1+\bar x_4)(\bar x_2+\bar x_3)+\bar a(\bar x_1+\bar x_4+\bar x_2+\bar x_3)}\Big([x_1,x_4,a[x_2,x_3,x_5]]\underbrace{-(-1)^{\bar a(\bar x_1+\bar x_4)}a[x_1,x_4,[x_2,x_3,x_5]]}_{I_2\label{2}}\Big)\\
&&+(-1)^{\bar x_2(\bar x_3+\bar x_1)+\bar a(\bar x_1+\bar x_2)} \Big([x_3,x_1,a[x_2,x_4,x_5]]\underbrace{-(-1)^{\bar a(\bar x_3+\bar x_1)}a[x_4,x_1,[x_2,x_4,x_5]]}_{I_3\label{3}}\Big)\\
&&+(-1)^{\bar x_4(\bar x_3+\bar x_1)+\bar a(\bar x_3+\bar x_4)} \Big([x_2,x_4,a[x_3,x_1,x_5]]\underbrace{-(-1)^{\bar a(\bar x_2+\bar x_4)}a[x_2,x_4,[x_3,x_1,x_5]]}_{I_4\label{4}}\Big)\\
&&+(-1)^{\bar x_1(\bar x_2+\bar x_3)+\bar a(\bar x_1+\bar x_3)} \Big([x_1,x_2,a[x_3,x_4,x_5]]\underbrace{-(-1)^{\bar a(\bar x_1+\bar x_2)}a[x_1,x_2,[x_3,x_4,x_5]]}_{I_5\label{5}}\Big)\\
&&+(-1)^{\bar x_2(\bar x_1+\bar x_3+\bar x_4)+\bar x_4\bar x_1+\bar a(\bar x_2+\bar x_4))}
\Big([x_3,x_4,a[x_1,x_2,x_5]]-\underbrace{(-1)^{\bar a(\bar x_3+\bar x_4)}a[x_3,x_4,[x_1,x_2,x_5]]}_{I_6\label{6}}\Big)
  \end{eqnarray*}
Thanks to Proposition \ref{pro:someequalities}, we have $\sum_{i=1}^6 I_i=0$. Using \eqref{eq:Rinhart1}, \eqref{eq:mod1} and \eqref{eq:mod2}, we obtain \eqref{eq:ho1}.
Similarly, \eqref{eq:ho2}- \eqref{eq:ho5} can be verified by a direct computation according to \eqref{eq:ho1}
and Definition \ref{def-3Lie rinh super}.
\end{proof}

Now we construct some new $3$-Lie-Rinehart superalgebra from a given $3$-Lie-Rinehart superalgebra
$(L, A, [\cdot,\cdot,\cdot],\rho).$
\begin{thm}
 Let $(L, A, [\cdot,\cdot,\cdot],\rho)$ be a $3$-Lie-Rinehart superalgebra and $B=A\otimes L=\{ax ~|~ a\in  \mathcal H(A), x\in  \mathcal H(L)\}$.
Then $(B, A, [\cdot,\cdot,\cdot]',\rho')$ is a $3$-Lie-Rinehart superalgebra, where the multiplication is defined by, for all $x_1, x_2, x_3\in \mathcal H(L)$,
$a_1, a_2, a_3\in \mathcal H(A),$
{\small{\begin{equation}
\begin{split}
[a_1x_1, a_2x_2, a_3x_3]' =& (-1)^{\bar a_2\bar x_1+\bar a_3(\bar x_1+\bar x_2)}a_1a_2a_3[x_1, x_2, x_3] +(-1)^{\bar a_2\bar x_1} a_1a_2\rho(x_1,x_2)(a_3)x_3 \\
&+(-1)^{\bar a_3\bar x_2+(\bar a_1+\bar x_1)(\bar a_2+\bar a_3+\bar x_2+\bar x_3)} a_2a_3\rho(x_2, x_3)(a_1)x_1\\
&+(-1)^{\bar a_3\bar x_1+(\bar a_2+x_2)(\bar a_3+\bar x_3)} a_1a_3\rho(x_1, x_3)(a_2)x_2,
\end{split}
\end{equation}}}
and $\rho':(A\otimes L) \wedge (A \otimes L) \rightarrow Der(A)$ is defined by $\rho'(a_1x_1, a_2x_2) =(-1)^{\bar a_2\bar x_1} a_1a_2\rho(x_1,x_2)$.

Note that $B=B_{\overline0}\oplus B_{\overline1}$ where $B_{\overline0}=A_{\overline0}\otimes L_{\overline0}\oplus A_{\overline1}\otimes L_{\overline1}$ and $B_{\overline1}=A_{\overline0}\otimes L_{\overline1}\oplus A_{\overline1}\otimes L_{\overline0}$.
\end{thm}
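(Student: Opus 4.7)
The plan is to verify directly that $(B, A, [\cdot,\cdot,\cdot]', \rho')$ satisfies the four axioms of Definition \ref{def-3Lie rinh super}. The strategy is to reduce each axiom to an identity involving the underlying bracket $[\cdot,\cdot,\cdot]$ and anchor $\rho$ of $L$, which can then be dispatched either by the axioms of the original $3$-Lie-Rinehart superalgebra or by the auxiliary identities \eqref{eq:ho1}--\eqref{eq:ho5} proved just above.

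First I would dispose of the routine items. Super skew-symmetry of $[\cdot,\cdot,\cdot]'$ follows from the super skew-symmetry of $[\cdot,\cdot,\cdot]$, the supercommutativity of $A$, and the fact that $\rho$ is super skew-symmetric in its two slots (a consequence of \eqref{eq:Rinhart2}). That $\rho'$ takes values in $\mathrm{Der}(A)$ is immediate because $a_1 a_2 \in A$ multiplies the derivation $\rho(x_1,x_2)$. The $A$-linearity condition \eqref{eq:Rinhart2} for $\rho'$ and the Rinehart compatibility \eqref{eq:Rinhart1} for $[\cdot,\cdot,\cdot]'$ reduce, after pushing scalars to the left, to the corresponding conditions for $(L,A,[\cdot,\cdot,\cdot],\rho)$ together with the associativity and supercommutativity of $A$.

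The principal work lies in verifying the two representation identities \eqref{eq:mod1}--\eqref{eq:mod2} for $\rho'$ and the fundamental identity \eqref{3skewsuper} for $[\cdot,\cdot,\cdot]'$. Expanding $[a_1x_1, a_2x_2, [a_3x_3, a_4x_4, a_5x_5]']'$ produces terms of three distinct shapes: iterated pure brackets of the form $[\cdot,\cdot,[\cdot,\cdot,\cdot]]$ living in $L$; mixed terms of the form $\rho(\cdot,\cdot)(a)\cdot[\cdot,\cdot,\cdot]$; and doubly-nested anchor terms of the form $\rho(\cdot,\cdot)\rho(\cdot,\cdot)(a)\cdot x$. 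I would organize the verification by collecting the terms in each of these three families separately. The pure bracket family cancels using the fundamental identity of $(L,[\cdot,\cdot,\cdot])$; the mixed family cancels by \eqref{eq:ho1}--\eqref{eq:ho2}; the doubly-nested family cancels by \eqref{eq:ho4}--\eqref{eq:ho5}. Identities \eqref{eq:mod1}--\eqref{eq:mod2} for $\rho'$ are handled in the same spirit, reducing to \eqref{eq:mod1}--\eqref{eq:mod2} for $\rho$ combined with \eqref{eq:Rinhart2}.

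The main obstacle is purely combinatorial: tracking the $\mathbb{Z}_2$-signs correctly through the expansions, since each trilinear slot of $B = A \otimes L$ generates several terms. To control the signs I would fix the convention that scalars are always moved to the far left in their natural Koszul order before any identity is applied, so that the signs produced by expanding $[\cdot,\cdot,\cdot]'$ line up exactly with those appearing in \eqref{eq:ho1}--\eqref{eq:ho5} (which were tailored to this verification). With that bookkeeping convention in place, each of the three families of terms matches a single displayed identity term by term, and the theorem follows.
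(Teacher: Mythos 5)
Your proposal is correct and follows essentially the same route as the paper's proof: a direct verification of the axioms of Definition \ref{def-3Lie rinh super}, with skew-symmetry, the $A$-linearity of $\rho'$, and the compatibility condition checked by hand, and the fundamental identity and representation conditions for $(B,[\cdot,\cdot,\cdot]',\rho')$ reduced to those of $(L,[\cdot,\cdot,\cdot],\rho)$ together with the auxiliary identities \eqref{eq:ho1}--\eqref{eq:ho5} established just before the theorem. Your grouping of the expanded terms into pure-bracket, mixed, and doubly-nested anchor families is simply a more explicit bookkeeping of the same cancellation the paper invokes.
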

\begin{proof}
For the super skew-symmetry of the bracket we have
\begin{eqnarray*}
[a_1x_1, a_2x_2, a_3x_3]' &=& (-1)^{\bar a_2\bar x_1+\bar a_3(\bar x_1+\bar x_2)}a_1a_2a_3[x_1, x_2, x_3]+(-1)^{\bar a_2\bar x_1} a_1a_2\rho(x_1,x_2)(a_3)x_3 \\
&&+(-1)^{\bar a_3\bar x_2+(\bar a_1+\bar x_1)(\bar a_2+\bar a_3+\bar x_2+\bar x_3)} a_2a_3\rho(x_2, x_3)(a_1)x_1\\
&&+(-1)^{\bar a_3\bar x_1+(\bar a_2+x_2)(\bar a_3+\bar x_3)} a_1a_3\rho(x_1, x_3)(a_2)x_2\\
&=&-(-1)^{(\bar a_1+\bar x_1)(\bar a_2+\bar x_2)}\Big((-1)^{\bar a_1\bar x_2+\bar a_3(\bar x_1+\bar x_2)}a_2a_1a_3[x_2, x_1, x_3] \\
&&+(-1)^{\bar a_1\bar x_2} a_2a_1\rho(x_2,x_1)(a_3)x_3 \\
&&+(-1)^{\bar a_3\bar x_1+(\bar a_2+\bar x_2)(\bar a_1+\bar a_3+\bar x_2+\bar x_3)} a_1a_3\rho(x_1, x_3)(a_2)x_2\\
&&+(-1)^{\bar a_3\bar x_2+(\bar a_1+x_1)(\bar a_3+\bar x_3)} a_2a_3\rho(x_2, x_3)(a_1)x_1\Big)\\
&=&-(-1)^{(\bar a_1+\bar x_1)(\bar a_2+\bar x_2)}[a_2x_2, a_1x_1, a_3x_3]'.
\end{eqnarray*}
Similarly, we obtain  $[a_1x_1, a_2x_2, a_3x_3]' =-(-1)^{(\bar a_2+\bar x_2)(\bar a_3+\bar x_3)}[a_1x_1, a_3x_3, a_2x_2]'$.

Using  \eqref{eq:ho1} and \eqref{eq:ho2}, we can deduce that $(B,[\cdot,\cdot,\cdot]')$ is a $3$-Lie superalgebra and $B$ is an
$A$-module. Thanks to \eqref{eq:mod1} and \eqref{eq:mod2}, for all $x_1, x_2, x_3 \in \mathcal H(L)$ and $b, a_1, a_2, a_3\in\mathcal H(A),$ it is easy to show that  $(A, \rho')$ is a representation of  $B$. Indeed,
\begin{alignat*}{2}
&\rho'(a_1x_1, a_2x_2)\rho'(a_3x_3, a_4x_4)-(-1)^{(\bar a_1+\bar x_1+\bar a_2+\bar x_2)(\bar x_3+\bar a_3+\bar x_4+\bar x_4)}
\rho'(a_3x_3, a_4x_4)\rho'(a_1x_1, a_2x_2)\\
& \quad =\rho'([a_1x_1, a_2x_2, a_3x_3]', a_4x_4) +(-1)^{(\bar a_1+\bar x_1+\bar a_2+\bar x_2)(\bar a_3+\bar x_3)} \rho'(a_3x_3, [a_1x_1, a_2x_2, a_4x_4]'), \\
&\rho'(a_1x_1, a_2x_2)\rho'(a_3x_3, a_4x_4) +(-1)^{(\bar a_1+\bar x_1)(\bar a_2+\bar x_2+\bar x_3+\bar x_3)} \rho'(a_2x_2, a_3x_3)\rho'(a_1x_1, a_4x_4)\\
& \quad + (-1)^{(\bar a_1+\bar x_1+\bar a_2+\bar x_2)(\bar a_3+\bar x_3)}\rho'(a_3x_3, a_1x_1)\rho'(a_2x_2, a_4x_4)=\rho'([a_1x_1, a_2x_2, a_3x_3], a_4x_4).
\end{alignat*}

To prove the compatibility condition \eqref{eq:Rinhart1}, we compute as follows
\begin{eqnarray*}
&&[a_1x_1, a_2x_2, b(a_3x_3)]' =(-1)^{\bar a_2\bar x_1+(\bar a_3+\bar b)(\bar x_1+\bar x_3)} a_1a_2ba_3[x_1, x_2, x_3]\\
&& \quad +(-1)^{\bar a_2\bar x_1 } a_1a_2\rho(x_1,x_2)(ba_3)x_3 \\
&& \quad +(-1)^{(\bar b+\bar a_3)\bar x_2+(\bar a+\bar x_1)(\bar a_2+\bar x_2+\bar a_3+\bar x_3)} a_2ba_3\rho(x_2, x_3)a_1x_1\\
&& \quad +(-1)^{(\bar b+\bar a_3)\bar x_1+(\bar a_2+\bar x_2)(\bar a_3+\bar b+\bar x_3}a_1ba_3\rho(x_1,x_3)a_2x_2\\
&&=(-1)^{\bar b(\bar a_1+\bar x_1+\bar a_2+\bar x_2)}b\Big(a_1a_2a_3[x_1, x_2, x_3] + (-1)^{\bar a_2\bar x_1 }a_1a_2\rho(x_1,x_2)a_3x_3 \\
&& \quad +(-1)^{\bar a_3\bar x_2+(\bar a_1+\bar x_1)(\bar a_2+\bar x_2+\bar a_3+\bar x_3)} a_2a_3\rho(x_2, x_3)a_1x_1\\
&& \quad +(-1)^{\bar a_3\bar x_1+(\bar a_2+\bar x_2)(\bar a_3+\bar x_3)}a_1a_3\rho(x_1,x_3)a_2x_2\Big)+(-1)^{\bar a_2\bar x_1}a_1a_2\rho(x_1,x_2)b(a_3x_3)\\
&&=(-1)^{\bar b(\bar a_1+\bar x_1+\bar a_2+\bar x_2)}b[a_1x_1, a_2x_2, a_3x_3]' + \rho'(a_1x_1, a_2x_2)b(a_3x_3).
\end{eqnarray*}
It is obvious to show that
$$
\rho'(b(a_1x_1),a_2x_2)=b\rho'(a_1x_1,a_2x_2)=(-1)^{\bar b(\bar a+\bar x_1)}\rho'(a_1x_1,b(a_2x_2)).
$$
Therefore, $(B, A, [\cdot,\cdot,\cdot]',\rho')$ is a $3$-Lie-Rinehart superalgebra.
\end{proof}
\begin{thm}
Let $(L, A,[\cdot,\cdot,\cdot],\rho)$ be a $3$-Lie-Rinehart superalgebra and
$$E=L\oplus A=\{(x,a)\  \ |\ \ x\in L, a\in A\}.$$ Then
$(E, A, [\cdot,\cdot,\cdot]_1,\rho_1)$ is a $3$-Lie-Rinehart superalgebra, where for any $a, b, c\in\mathcal H(A)$,
$x, y, z\in\mathcal H(L)$ and  $k \in \mathbb K$,
\begin{equation}\label{eq16}
 k(x, a) = (kx, ka), (x, a) + (y, b) = (x + y, a + b), a(y, b) = (ay, ab),
 \end{equation}
 \begin{equation}\label{eq17}
 [(x, a), (y, b), (z, c)]_1 = ([x, y, z], \rho(x, y)c -(-1)^{\bar y\bar z}\rho(x, z)b+(-1)^{\bar x(\bar y+\bar z)} \rho(y, z)a ),
 \end{equation}
 \begin{equation}\label{eq18}
\rho_1 : E\wedge E \rightarrow Der(A), \rho_1((x, a),(y, b)) = \rho(x, y).
\end{equation}

Note that $E=E_{\overline0}\oplus E_{\overline1}$, where $E_{\overline0}= L_{\overline0}\oplus A_{\overline0}$ and $E_{\overline1}= L_{\overline1}\oplus A_{\overline1}$.
\end{thm}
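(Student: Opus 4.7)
The proof naturally decomposes into four verifications: (1) $(E,[\cdot,\cdot,\cdot]_1)$ is a $3$-Lie superalgebra; (2) $\rho_1$ is a representation of that $3$-Lie superalgebra on $A$; (3) the $A$-linearity condition \eqref{eq:Rinhart2} holds for $\rho_1$; (4) the Rinehart compatibility \eqref{eq:Rinhart1} holds. The componentwise scalar action in \eqref{eq16} makes $E$ an $A$-module, and the declared grading $E_{\overline 0}\oplus E_{\overline 1}$ makes the bracket \eqref{eq17} and the map $\rho_1$ even, so the structural prerequisites are in place at the outset.

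For step (1), super skew-symmetry of $[\cdot,\cdot,\cdot]_1$ follows from the super skew-symmetry of $[\cdot,\cdot,\cdot]$ and of $\rho$ by a sign-tracking computation on each of the three summands in \eqref{eq17}; this is routine. For the fundamental identity \eqref{3skewsuper}, I would expand $[(x_1,a_1),(x_2,a_2),[(x_3,a_3),(x_4,a_4),(x_5,a_5)]_1]_1$ together with the three terms on the right hand side, and split each expression into its $L$-component and its $A$-component. The $L$-component of the identity collapses, after the dust settles, to the super-Jacobi identity \eqref{3skewsuper} for $(L,[\cdot,\cdot,\cdot])$, which holds by hypothesis. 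The $A$-component groups naturally into terms of type $\rho(-,-)(c)[-,-,-]$ (with $c \in A$) and terms of type $\rho(-,-)\rho(-,-)(-)$; these are controlled precisely by the identities \eqref{eq:ho1}, \eqref{eq:ho2}, \eqref{eq:ho4}, \eqref{eq:ho5} established in the preceding theorem, together with the Rinehart identity \eqref{eq:Rinhart1} used to move $\rho(x,y)(c)$ past brackets, and the derivation property of $\rho(x,y)$ on $A$ used to distribute $\rho(x,y)$ over products $cd$.

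Step (2) is essentially automatic: since $\rho_1((x,a),(y,b))=\rho(x,y)$ depends only on the $L$-components and the $L$-component of $[(x_1,a_1),(x_2,a_2),(x_3,a_3)]_1$ is $[x_1,x_2,x_3]$, both relations \eqref{eq:mod1} and \eqref{eq:mod2} for $\rho_1$ reduce immediately to the same relations for $\rho$, which hold because $(A,\rho)$ is a representation of $(L,[\cdot,\cdot,\cdot])$. Step (3) similarly reduces to \eqref{eq:Rinhart2} for $\rho$ applied to the $L$-components, after noting $c(x,a)=(cx,ca)$. Step (4) is a direct calculation: expanding $[(x,a),(y,b),c(z,d)]_1=[(x,a),(y,b),(cz,cd)]_1$ via \eqref{eq17}, rewriting $[x,y,cz]$ by \eqref{eq:Rinhart1} for $L$, rewriting $\rho(x,y)(cd)$ by the derivation property, and rewriting $\rho(x,cz)$, $\rho(y,cz)$ by \eqref{eq:Rinhart2} for $\rho$, one checks termwise that the result equals $(-1)^{\bar c(\bar x+\bar y)}c[(x,a),(y,b),(z,d)]_1+\rho_1((x,a),(y,b))(c)(z,d)$.

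The main obstacle is the $A$-component of the super-Jacobi identity in step (1). The computation is long and sign-sensitive, and the subtle point is to recognize that the many mixed terms $\rho(-,-)(a_i)\cdot[\text{bracket}]$ and $\rho(-,-)\rho(-,-)(a_i)\cdot x_j$ that appear in the expansion are exactly the left hand sides of \eqref{eq:ho1}--\eqref{eq:ho5}, so that all of them cancel simultaneously; the identities of the preceding theorem were formulated precisely with this application in view, so one should not expect to need any further identity beyond those and the Rinehart axioms.
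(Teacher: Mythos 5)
Your four-step decomposition (skew-symmetry and the fundamental identity for $[\cdot,\cdot,\cdot]_1$; representation property of $\rho_1$; the $A$-linearity condition \eqref{eq:Rinhart2}; the compatibility \eqref{eq:Rinhart1}) is the same as the paper's, and your steps (2)--(4) are carried out essentially as in the paper's proof. The genuine problem is your account of the crucial step (1). The bracket \eqref{eq17} has $L$-component $[x,y,z]$, independent of the $A$-entries, and an $A$-component that is linear in $a,b,c$ and contains no $A$-multiplications at all. Hence, when you expand the fundamental identity on $E$, the $L$-component is literally the identity \eqref{3skewsuper} for $(L,[\cdot,\cdot,\cdot])$, while the $A$-component consists only of compositions $\rho(x_i,x_j)\rho(x_k,x_l)a_m$ and of terms $\rho(x_i,[x_j,x_k,x_l])a_m$ (equivalently $\rho([x_j,x_k,x_l],x_i)a_m$ up to sign). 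Their cancellation is exactly the representation conditions \eqref{eq:mod1} and \eqref{eq:mod2} for $\rho$ on $A$, which is how the paper concludes.

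The terms you predict, of type $\rho(-,-)(a_i)\cdot[\cdot,\cdot,\cdot]$ and $\rho(-,-)\rho(-,-)(a_i)\cdot x_j$, simply do not arise for this bracket, and the identities \eqref{eq:ho1}, \eqref{eq:ho2}, \eqref{eq:ho5} you invoke are not the relevant ones: they govern the verification of the \emph{preceding} construction $B=A\otimes L$, whose bracket genuinely mixes $A$-multiplication into both components. Likewise, neither \eqref{eq:Rinhart1} nor the derivation property of $\rho(x,y)$ is needed in step (1), since no expressions $[x,y,cz]$ or $\rho(x,y)(cd)$ occur there; those enter only in your step (4), where you use them correctly. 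As written, your plan for the hardest part would not produce the claimed cancellation; once the cited identities are replaced by \eqref{eq:mod1}--\eqref{eq:mod2}, the computation goes through and coincides with the paper's argument.
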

\begin{proof}
Let  $x_i \in\mathcal H(L), a_i \in \mathcal H(A), i=1,\dots, 5$. Thanks to \eqref{eq16}, $E$ is an $A-$module, and the 3-ary linear multiplication defined by \eqref{eq17} is  super skew-symmetric. We have
\begin{align*}
& [(x_1, a_1), (x_2, a_2), [(x_3, a_3), (x_4, a_4), (x_5, a_5)]_1]_1 \\
& =[(x_1, a_1), (x_2, a_2),([x_3,x_4,x_5],\rho(x_3,x_4)a_5  -(-1)^{\bar x_4\bar x_5}\rho(x_3,x_5)a_4 \\
& \hspace{7cm} +(-1)^{\bar x_3(\bar x_4+\bar x_5)}\rho(x_4,x_5)a_3)]_1 \\
& =\big([x_1,x_2,[x_3,x_4,x_5]],\rho(x_1,x_2)\rho(x_3,x_4)a_5-(-1)^{\bar x_4\bar x_5}\rho(x_1,x_2)\rho(x_3,x_5)a_4\\
& +(-1)^{\bar x_3(\bar x_4+\bar x_5)}\rho(x_1,x_2)\rho(x_4,x_5)a_3-(-1)^{\bar x_2(\bar x_3+\bar x_4+\bar x_5)}\rho(x_1,[x_3,x_4,x_5])a_2\\
& -(-1)^{\bar x_1(\bar x_2+\bar x_3+\bar x_4+\bar x_5)}\rho(x_2,[x_3,x_4,x_5])a_1\big).
\end{align*}
Similarly we obtain
\begin{align*}
&[[(x_1, a_1), (x_2, a_2), (x_3, a_3)]_1, (x_4, a_4), (x_5, a_5)]_1\\
&+(-1)^{\bar x_3(\bar x_1+\bar x_2)}[(x_3, a_3),[(x_1, a_1), (x_2, a_2),  (x_4, a_4)]_1, (x_5, a_5)]_1\\
&+(-1)^{(\bar x_3+\bar x_4)(\bar x_1+\bar x_2)}[(x_3, a_3), (x_4, a_4),[(x_1, a_1), (x_2, a_2), (x_5, a_5)]_1]_1\\
&=[([x_1,x_2,x_3],\rho(x_1,x_2)a_3-(-1)^{\bar x_2\bar x_3}\rho(x_1,x_3)a_2 \\
&+(-1)^{\bar x_1(\bar x_2+\bar x_3)}\rho(x_2,x_3)a_2), (x_4, a_4), (x_5, a_5)]_1\\
&+(-1)^{\bar x_3(\bar x_1+\bar x_2)} \big([(x_3,a_3), ([x_1,x_2,x_4],\rho(x_1,x_2)a_4-(-1)^{\bar x_2\bar x_4}\rho(x_1,x_4)a_2\\
&+(-1)^{\bar x_1(\bar x_2+\bar x_4)}\rho(x_2,x_4)a_2), (x_5, a_5)]_1\big)\\
&+ (-1)^{(\bar x_3+\bar x_4)(\bar x_1+\bar x_2)}\big([(x_3,a_3), (x_4,a_5), ([x_1,x_2,x_5],\rho(x_1,x_2)a_5-(-1)^{\bar x_2\bar x_5}\rho(x_1,x_5)a_2\\
&+(-1)^{\bar x_1(\bar x_2+\bar x_5)}\rho(x_2,x_5)a_2)]_1\big)\\
&=\big([x_1,x_2,[x_3,x_4,x_5]],\rho(x_1,x_2)\rho(x_3,x_4)a_5-(-1)^{\bar x_4\bar x_5}\rho(x_1,x_2)\rho(x_3,x_5)a_4\\
&+(-1)^{\bar x_4(\bar x_4+\bar x_5)}\rho(x_1,x_2)\rho(x_4,x_5)a_3-(-1)^{\bar x_2(\bar x_3+\bar x_4+\bar x_5)}\rho(x_1,[x_3,x_4,x_5])a_2\\
&-(-1)^{\bar x_1(\bar x_2+\bar x_3+\bar x_4+\bar x_5)}\rho(x_2,[x_3,x_4,x_5])a_1\big)\\
&+(-1)^{\bar x_3(\bar x_1+\bar x_2)}\Big([x_3,[x_1,x_2,x_4],x_5],\rho(x_3,[x_1,x_2,x_4])a_5 \\
&-(-1)^{\bar x_5(\bar x_1+\bar x_2+\bar x_4)}(\rho(x_3,x_5)\rho(x_1,x_2)a_4-(-1)^{\bar x_2\bar x_4}\rho(x_3,x_5)\rho(x_1,x_4)a_2\\
&+(-1)^{\bar x_1(\bar x_2+\bar x_4)}\rho(x_3,x_5)\rho(x_2,x_4)a_1)+(-1)^{\bar x_3(\bar x_1+\bar x_2+\bar x_4+\bar x_5)}\rho([x_1,x_2,x_4],x_5)a_3\Big)\\
&+(-1)^{(\bar x_3+\bar x_4)(\bar x_1+\bar x_2)}\Big([x_3,x_4,[x_1,x_2,x_5]],
\rho(x_3,x_4)\rho(x_1,x_2)a_5-(-1)^{\bar x_2\bar x_5}\rho(x_3,x_4)\rho(x_1,x_5)a_2\\
&+(-1)^{\bar x_1(\bar x_2+\bar x_5)}\rho(x_3,x_4)\rho(x_2,x_5)a_1- (-1)^{\bar x_4(\bar x_1+\bar x_2+\bar x_5)}\rho(x_3,[x_1,x_2,x_5])a_4 \\
&+ (-1)^{\bar x_3(\bar x_1+\bar x_2+\bar x_4+\bar x_5)}\rho(x_4,[x_1,x_2,x_5])a_3\Big).
\end{align*}
Then \eqref{3skewsuper} holds thanks to \eqref{eq:mod1}. Therefore, $(E,[\cdot,\cdot,\cdot]_1)$ is a $3$-Lie superalgebra.

Now we prove that $\rho_1$ is a representation of $E$ over $A$. By \eqref{eq18}, we have
\begin{align*}
& \rho_1([(x_1, a_1), (x_2, a_2), (x_3, a_3)], (x_4, a_4)) -(-1)^{\bar x_3\bar x_4} \rho_1((x_3, a_3), [(x_1, a_1), (x_2, a_2), (x_4, a_4)])\\
& =\rho_1(([x_1, x_2, x_3], \rho(x_1, x_2)a_3 -(-1)^{\bar x_2\bar x_3} \rho(x_1, x_2)a_3 +(-1)^{\bar x_1(\bar x_2+\bar x_3)} \rho(x_2, x_3)a_1), (x_4, a_4))\\
&-(-1)^{\bar x_3\bar x_4} \Big( \rho_1((x_3, a_3),([x_1, x_2, x_4], \rho(x_1, x_2)a_4 -(-1)^{\bar x_2\bar x_4} \rho(x_1, x_2)a_4 \\
&+(-1)^{\bar x_1(\bar x_2+\bar x_4)} \rho(x_2, x_4)a_1))\Big)\\
&=\rho([x_1, x_2, x_3], x_4) -(-1)^{\bar x_3\bar x_4} \rho(x_3, [x_1, x_2, x_4]) \\
&=\rho(x_1,x_2)\rho(x_3,x_4)-(-1)^{(\bar x_1+\bar x_2)(\bar x_3+\bar x_4)}\rho(x_3,x_4)\rho(x_1,x_2)\\
&=\rho_1((x_1,a_1),(x_2,a_2))\rho_1((x_3,a_3),(x_4,a_4)) \\
&\quad -(-1)^{(\bar x_1+\bar x_2)(\bar x_3+\bar x_4)}\rho((x_3,a_3),(x_4,a_4))\rho((x_1,a_1),(x_2,a_2)).
\end{align*}
Therefore, \eqref{eq:mod1} holds. Similarly, we can prove \eqref{eq:mod2}. Then $\rho_1$ is a representation of $E$ over $A$. For all $b\in \mathcal H(A)$, we have
\begin{eqnarray*}
&&[(x_1, a_1), (x_2, a_2), b(x_3, a_3)]_1 = [(x_1, a_1), (x_2, a_2), (bx_3, ba_3)]_1\\
&&=([x_1, x_2, bx_3], \rho(x_1, x_2)(ba_3) -(-1)^{\bar x_2\bar x_3} \rho(x_1, bx_3)a_2 +(-1)^{\bar x_1(\bar x_2+\bar x_3)} \rho(x_2,bx_3)a_1)\\
&&=((-1)^{\bar b(\bar x_1+\bar x_2)}b[x_1, x_2, x_3] + \rho(x_1, x_2)bx_3, \rho(x_1, x_2)(ba_3) -(-1)^{\bar x_2\bar x_3} \rho(x_1, bx_3)a_2 \\
&& \hspace{7,4cm} +(-1)^{\bar x_1(\bar x_2+\bar x_3)} \rho(x_2,bx_3)a_1)\\
&&=(-1)^{\bar b(\bar x_1+\bar x_2)}b[(x_1, a_1), (x_2, a_2), (x_3, a_3)] + \rho_1(x_1, x_2)b(x_3, a_3).
\end{eqnarray*}
Moreover, we have
\begin{align*}
 \rho_1(c(x,a),(y,b))&=\rho_1((cx,ca),(y,b))=\rho(cx,y)\\
&=(-1)^{\bar c\bar x}\rho(x,cy)=(-1)^{\bar c\bar x}\rho_1((x,a), (cy,cb))=(-1)^{\bar c\bar x}\rho_1((x,a), c(y,b)).
\end{align*}
Similarly, we obtain $\rho_1(c(x,a),(y,b))=c\rho_1((x,a),(y,b))$. Then $(E, A, [\cdot,\cdot,\cdot]_1,\rho_1)$ is a $3$-Lie-Rinehart superalgebra.
\end{proof}

\section{Cohomology and deformations of $3$-Lie-Rinehart superalgebras} \label{sec:cohomdef3LieRinesuperalg}
\def\theequation{\arabic{section}. \arabic{equation}}
\setcounter{equation} {0}
In this section, we study the  notion of
a module for a $3$-Lie-Rinehart superalgebras and subsequently we introduce
a cochain complex and cohomology of a $3$-Lie-Rinehart superalgebras with coefficients in a module, then we study relations between 1 and 2 cocycles of a Lie-Rinehart superalgebra
and the induced $3$-Lie-Rinehart superalgebra.
At the end of this section, we study the deformation of $3$-Lie-Rinehart superalgebras

\subsection{Representations and cohomology of $3$-Lie-Rinehart superalgebras }

\begin{defn} \label{def-rep}Let $M$ be an $A$-module. and  $\psi: L\o L\rightarrow End(M)$ be an even linear map. The pair $(M,\psi)$ is called a left module of the $3$-Lie-Rinehart superalgebra  $(L, A, [\cdot,\cdot,\cdot],\rho)$ if the following conditions hold:
\begin{enumerate}[label=\upshape{(\roman*)},left=7pt]
\item  $\psi$ is a representation of $(L, [\c, \c, \c])$ on $M$,
\item $\psi(a\c x, y)=(-1)^{\bar a\bar x}\psi(x, a\c y)=a\c \psi(x, y)$, for all $a\in \mathcal H(A)$ and $x, y\in \mathcal H(L)$,
\item $\psi(x, y)(a\c m)=(-1)^{\bar a(\bar x+\bar y)}a\c \psi(x, y)(m)+\rho(x, y)(a)m$, for all $x, y\in \mathcal H(L)$, $a\in \mathcal H(A)$ and $m\in M$.
\end{enumerate}
\end{defn}

\begin{ex}
$A$ is a left module over $L$ since $\rho$ is a representation of $(L, [\c, \c, \c])$
over $A$ and  the other conditions are satisfied automatically by definition of the map $\rho$.
\end{ex}
\begin{ex}
The pair $(L,ad)$ is a left module over $L$, which is called the adjoint representation of $(L, A, [\cdot,\cdot,\cdot],\rho)$.
\end{ex}

\begin{prop}
Let $(L, A,[\cdot,\cdot,\cdot],\rho)$ be a $3$-Lie-Rinehart superalgebra. Then $(M, \psi)$ is a left module over $(L, A,[\cdot,\cdot,\cdot],\rho)$  if and only if $(L\oplus M, A, [\cdot,\cdot,\cdot]_{L\oplus M},\rho_{L\oplus M})$ is a $3$-Lie-Rinehart superalgebra with the multiplication:
\begin{multline*}
[x_1+m_1,x_2+m_2,x_3+m_3]_{L\oplus M}:=[x_1, x_2, x_3]+\psi(x_1, x_2)m_3+(-1)^{\bar x_1(\bar x_2+\bar x_3)}\psi(x_2, x_3)m_1\\
\hspace{9cm} +(-1)^{\bar x_2\bar x_3}\psi(x_3, x_1)m_2,\\
\rho_{L\oplus M}: (L\oplus M) \otimes (L\oplus M) \rightarrow Der(A), \quad  \rho_{L\oplus M} (x_1+m_1, x_2+m_2):=\rho(x_1, x_2),
\end{multline*}
for any $x_1, x_2, x_3\in \mathcal H(L)$ and $m_1,m_2, m_3\in M$.\\
Note that $(L\oplus M)_{\overline 0}=L_{\overline 0}\oplus M_{\overline 0}$, implying that if $x+m\in \mathcal H(L\oplus M)$, then $\overline{x+m}=\bar{x}=\bar{m}$.

\end{prop}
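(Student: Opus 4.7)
The plan is to establish both directions of the equivalence simultaneously by showing that, for each axiom in the definition of a $3$-Lie-Rinehart superalgebra on $L\oplus M$, the $L$-component reduces to an already-known axiom for $(L,A,[\cdot,\cdot,\cdot],\rho)$, while the $M$-component reproduces exactly one axiom from Definition \ref{def-rep}. Since $\rho_{L\oplus M}(x_1+m_1,x_2+m_2)=\rho(x_1,x_2)$ forgets the $M$-component, the representation conditions \eqref{eq:mod1}, \eqref{eq:mod2} and the $A$-linearity \eqref{eq:Rinhart2} for $\rho_{L\oplus M}$ are automatic from the corresponding conditions for $\rho$. Super skew-symmetry of $[\cdot,\cdot,\cdot]_{L\oplus M}$ is also immediate: the three cyclic $\psi$-summands in the defining formula are designed so that any transposition in the three arguments produces the required $\mathbb{Z}_2$-sign, using that $\psi$ is bilinear and super skew-symmetric in its two $L$-arguments inherited from \eqref{eq:mod1}.

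The central step is the fundamental identity \eqref{3skewsuper} on $L\oplus M$. I would insert $X_i=x_i+m_i$ for $i=1,\dots,5$ and expand both sides using the defining formulas for $[\cdot,\cdot,\cdot]_{L\oplus M}$. Because the bracket vanishes whenever two or more arguments lie in $M$, the result is a multilinear expression that splits by $m$-degree: the degree-$0$ part (no $m_i$) is precisely the fundamental identity of the $3$-Lie superalgebra $(L,[\cdot,\cdot,\cdot])$, and the degree-$1$ part decomposes by which argument is taken in $M$. Setting only $m_5\neq 0$ isolates exactly the identity \eqref{eq:mod1} applied to $\psi$; setting only $m_3\neq 0$ (with $x_4,x_5$ contributed by the inner bracket) isolates \eqref{eq:mod2}; setting $m_1$ or $m_2$ nonzero gives equivalent identities after reindexing by super skew-symmetry. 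Hence the fundamental identity on $L\oplus M$ is equivalent to the conjunction of the $3$-Lie superalgebra axiom on $L$ together with $\psi$ being a representation of $L$ on $M$, which is condition (i) of Definition \ref{def-rep}.

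For the compatibility condition \eqref{eq:Rinhart1} applied to $X_1,X_2$ and $aX_3$, the defining formula gives $[X_1,X_2,aX_3]_{L\oplus M}=[x_1,x_2,ax_3]+\psi(x_1,x_2)(am_3)+(\text{cyclic }\psi\text{-terms})$. Using the $A$-linearity of $\psi$ (which is forced by the symmetric occurrence of the $\psi$-terms in positions analogous to $\rho_{L\oplus M}$) the $L$-component returns the Lie-Rinehart compatibility of $(L,A)$, while the $M$-component reads off as $\psi(x_1,x_2)(am_3)=(-1)^{\bar a(\bar x_1+\bar x_2)}a\psi(x_1,x_2)(m_3)+\rho(x_1,x_2)(a)m_3$, which is condition (iii) of Definition \ref{def-rep}. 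In the same way the $A$-bilinearity $\psi(ax,y)=(-1)^{\bar a\bar x}\psi(x,ay)=a\psi(x,y)$ of condition (ii) emerges by comparing both sides of \eqref{eq:Rinhart2} for $\rho_{L\oplus M}$ together with the coefficient of the $\psi$-summand in the defining bracket after substituting $ax+m$ or $x+am$.

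The main obstacle is purely combinatorial: carefully matching the Koszul signs produced by the cyclic arrangement in the definition of $[\cdot,\cdot,\cdot]_{L\oplus M}$ with the signs that appear in \eqref{eq:mod1} and \eqref{eq:mod2}. Once this is organized by polarizing in the variables $m_i$ (each module axiom is pinned down by keeping exactly one $m_i$ nonzero), both directions of the equivalence follow in parallel: expressing each $3$-Lie-Rinehart axiom on $L\oplus M$ as a sum of an $L$-piece and an $M$-piece shows that the $M$-piece is equivalent to the corresponding module axiom, yielding the stated iff.
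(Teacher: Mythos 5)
Your proposal is correct and takes essentially the same route as the paper: a direct axiom-by-axiom verification in which each condition on $L\oplus M$ splits into an $L$-component (an axiom of $(L,A,[\cdot,\cdot,\cdot],\rho)$) and an $M$-component (one of conditions (i)--(iii) of Definition \ref{def-rep}), the paper writing out the forward direction explicitly and noting that the converse is analogous. One bookkeeping correction: polarizing with the module element in slot $3$ reproduces \eqref{eq:mod1} again (in shifted variables), and it is the placements with $m_1$ or $m_2$ nonzero that yield \eqref{eq:mod2} --- they are not mere reindexings of the other cases --- but since your plan examines all placements, the totality of identities obtained is still exactly the representation conditions together with the fundamental identity on $L$, so the argument stands.
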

\begin{proof}  Since $L$ and $M$ are $A$-modules, then $L\oplus M$ is an $A$-module via
\begin{eqnarray*}
a(x+m)=ax+am, \forall a\in A, x\in L, m\in M.
\end{eqnarray*}
 If $(M, \psi)$ is a left module over $(L, A,[\cdot,\cdot,\cdot],\rho)$. Then $(L\oplus M, [\cdot,\cdot,\cdot]_{L\oplus M})$ is a $3$-Lie superalgebra.  It is obvious that $ \rho_{L\oplus M}$ is a representation of the $3$-Lie superalgebra  $(L\oplus M, [\cdot,\cdot,\cdot]_{L\oplus M})$ over $A$.

  For any $x_1, x_2, x_3\in \mathcal H(L)$,  $m_1,m_2, m_3\in \mathcal H(M)$ and $a\in \mathcal H(A)$,
 \begin{align*}
&[x_1+m_1,x_2+m_2,a(x_3+m_3)]_{L\oplus M} =[x_1+m_1,x_2+m_2,ax_3+am_3]_{L\oplus M}\\
=&[x_1, x_2, ax_3]+\psi(x_1, x_2)am_3+(-1)^{\bar x_1(\bar x_2+\bar x_3)}\psi(x_2, ax_3)m_1+(-1)^{\bar x_2\bar x_3}\psi(ax_3, x_1)m_2\\
=&(-1)^{\bar a(\bar x_1+\bar x_2)}a[x_1, x_2, x_3]+\psi(x_1, x_2)am_3+(-1)^{\bar x_1(\bar x_2+\bar x_3)}\psi(x_2, ax_3)m_1\\
& \hspace{2,6cm} +(-1)^{\bar x_2\bar x_3}\psi(ax_3, x_1)m_2+\rho(x_1,x_2)(am_3)\\
=&(-1)^{\bar a(\bar x_1+\bar x_2)}a([x_1, x_2, x_3]+\psi(x_1, x_2)m_3+(-1)^{\bar x_1(\bar x_2+\bar x_3)}\psi(x_2, x_3)m_1 +(-1)^{\bar x_2\bar x_3}\psi(x_3, x_1)m_2)\\
& \hspace{6cm} +\rho(x_1,x_2)(am_3)\\
=& (-1)^{\bar a(\bar x_1+\bar x_2)}a[x_1+m_1,x_2+m_2,x_3+m_3]_{L\oplus M}+\rho_{L\oplus M} (x_1+m_1, x_2+m_2)a(x_3+m_3).
 \end{align*}
Moreover,
\begin{align*}
 \rho_{L\oplus M}(a(x_1+m_1),x_2+m_2)=\rho_{L\oplus M}(ax_1+am_1),x_2+m_2)
 =\rho(ax_1,x_2)=(-1)^{\bar a\bar x_1}\rho(x_1,ax_2)\\
=(-1)^{\overline a~\overline{x_1+m_1}}\rho_{L\oplus M}(x_1+m_1, ax_2+am_2)
=(-1)^{\overline a~\overline{x_1+m_1}}\rho_{L\oplus M}(x_1+m_1, a(x_2+m_2)).
\end{align*}
Similarly, we obtain $\rho_{L\oplus M}(a(x_1+m_1),x_2+m_2)=a\rho_{L\oplus M}(x_1+m_1,x_2+m_2)$.
Therefore, $(L\oplus M, A,  [\cdot,\cdot,\cdot]_{L\oplus M},\rho_{L\oplus M})$ is a $3$-Lie-Rinehart superalgebra.
The sufficient condition can be done in the same way.
\end{proof}
Let  $(M,\psi)$ be a left module of the $3$-Lie-Rinehart superalgebra  $(L, A, [\cdot,\cdot,\cdot],\rho)$ and  $C^{n}(L, M)$ the space of all linear maps   $f: \wedge^2L\o \cdots \o \wedge^2L \wedge L\rightarrow M $ satisfying the conditions below:
\begin{enumerate}
\item $f(x_1, \dots,x_i,x_{i+1}, \dots, x_{2n}, x_{2n+1})=-(-1)^{\bar x_i\bar x_{i+1}}f(x_1,  \dots,x_{i+1},x_i, \dots, x_{2n+1})$,  for  all $x_i\in\mathcal H (L),  1\leq i \leq 2n+1$,
\item $f(x_1,\c \c \c, a\c x_i, \c \c \c, x_{2n+1})=(-1)^{\bar a(\bar x_1+\dots+\bar x_{i-1}+\bar f)}af(x_1,\c \c \c, x_i, \c \c \c, x_{2n+1})$,  for  all $x_i\in \mathcal H(L),  1\leq i \leq 2n+1$  and $a\in \mathcal H(A)$.
\end{enumerate}
Next we consider the $\mathbb{Z}_{+}$-graded space of $\mathbb{K}$-modules
\begin{eqnarray*}
C^{\ast}(L, M):=\oplus_{n\geq 0}C^{n}(L, M).
\end{eqnarray*}

Define the $\mathbb{K}$-linear maps $\delta_{3LR}:  C^{n-1}(L, M)\rightarrow C^{n}(L, M)$ given by
\begin{multline*}
\delta_{3LR} f(x_1,\c  \c  \c, x_{2n+1}) = \\
(-1)^{n+(\bar f+\bar x_1+\bar x_2+\dots +\bar x_{2n-2}) (\bar x_{2n-1}+\bar x_{2n+1})+\bar x_{2n+1}~\bar x_{2n}} \psi(x_{2n-1}, x_{2n+1})f(x_1, \c \c \c, x_{2n-2}, x_{2n})\\
+ (-1)^{n+(\bar f+\bar x_1+\bar x_2+\dots +\bar x_{2n-1}) (\bar x_{2n}+\bar x_{2n+1})} \psi(x_{2n}, x_{2n+1})f(x_1, \c \c \c, x_{2n-1})\\
+\sum^{n}_{k=1}(-1)^{k+(\bar f+\bar x_1+\bar x_2+\dots +\bar x_{2k-2}) (\bar x_{2k-1}+\bar x_{2k})}\psi(x_{2k-1},x_{2k})f(x_1, \c \c \c, \widehat{x}_{2k-1}, \widehat{x}_{2k},\c \c \c, x_{2n+1})\\
+\sum^{n}_{k=1}\sum_{j=2k+1}^{2n+1}(-1)^{k+(\bar x_{2k+1}+\dots +\bar x_{j-1})(\bar x_{2k-1}+\bar x_{2k})}f(x_1, \c \c \c, \widehat{x}_{2k-1}, \widehat{x}_{2k},\c \c \c, [x_{2k-1}, x_{2k}, x_j], \c \c  \c,  x_{2n+1}).
\end{multline*}
\begin{prop}
If $f\in C^{n-1}(L, M)$, then $\delta_{3LR}f\in C^{n}(L, M)$ and $\delta^2_{3LR}=0$.
\end{prop}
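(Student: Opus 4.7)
The proof splits into two claims: the membership $\delta_{3LR}f \in C^{n}(L,M)$ and the cocycle identity $\delta_{3LR}^2 = 0$. For the first claim I must verify the two defining conditions of $C^{n}(L,M)$: super skew-symmetry in each consecutive pair $(x_{2i-1},x_{2i})$ of the first $2n$ arguments, and the super $A$-linearity in each slot with the prescribed sign. Skew-symmetry will follow by term-by-term inspection from the super skew-symmetry of the $3$-bracket \eqref{3skewsuper:skewsym}, the super skew-symmetry of $\psi$ inherited from its being a representation on $\wedge^2L$, and the cochain skew-symmetry of $f$; the signs line up after a standard Koszul-rule check.

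For the $A$-linearity, I would replace $x_i$ by $a x_i$ in $\delta_{3LR}f(x_1,\ldots,x_{2n+1})$ and expand using: the $A$-linearity of $f$, condition (ii) of Definition~\ref{def-rep} for $\psi$, the analogous condition \eqref{eq:Rinhart2} for $\rho$, the compatibility \eqref{eq:Rinhart1}, and condition (iii) of Definition~\ref{def-rep} whenever $a\cdot m$ appears inside $\psi(x,y)(\cdot)$. The derivation-type terms $\rho(x_{2k-1},x_{2k})(a)\cdot$ produced by condition (iii) of Definition~\ref{def-rep} will cancel, with appropriate signs, against the derivation terms produced by \eqref{eq:Rinhart1} inside $f(\ldots,[x_{2k-1},x_{2k},a x_j],\ldots)$, leaving only the required factor $(-1)^{\bar a(\bar x_1+\cdots+\bar x_{i-1}+\bar f)}a\cdot \delta_{3LR}f(x_1,\ldots,x_{2n+1})$.

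For $\delta_{3LR}^2 f = 0$, I would expand $\delta_{3LR}(\delta_{3LR}f)$ and group the resulting double sum into three families: \emph{(I)} terms with two nested $\psi$-factors of the form $\psi(\cdot,\cdot)\psi(\cdot,\cdot)f(\ldots)$; \emph{(II)} mixed terms with one $\psi$ outside and a $3$-bracket inside $f$, or with a $3$-bracket appearing as an argument of $\psi$, i.e.\ $\psi([x_a,x_b,x_c],x_d)f(\ldots)$; and \emph{(III)} double-bracket terms $f(\ldots,[x_a,x_b,[x_c,x_d,x_e]],\ldots)$. Family (I) vanishes by the representation axioms \eqref{eq:mod1}--\eqref{eq:mod2}, exactly as in the $3$-Lie algebra case. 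Family (III) cancels by the fundamental identity \eqref{3skewsuper} applied to the inner triple. Family (II) cancels via \eqref{eq:mod1}, which expresses $\psi([x_a,x_b,x_c],x_d)$ in terms of a super-commutator of nested $\psi$'s and thereby matches contributions from (I) and from the bracket-inside-$f$ terms.

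The main obstacle is the sign bookkeeping in the $\delta^2=0$ step: the Koszul sign rule interacts with the exponents already present in the coboundary, and several formally distinct contributions cancel only after relabelling the inserted pair $(2k-1,2k)\leftrightarrow (2\ell-1,2\ell)$ and invoking the super skew-symmetry of $f$ and of $\psi$. My strategy would be to index the expansion by the pair of positions at which the two coboundary insertions land, separating the cases where the two pairs are disjoint, share a single index, or one pair is nested inside the bracket produced by the other; only the latter two cases give non-trivially cancelling contributions, and the remaining argument reduces to repeated applications of \eqref{3skewsuper}, \eqref{eq:mod1} and \eqref{eq:mod2}.
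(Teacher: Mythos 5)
Your proposal is correct and takes essentially the same route as the paper: the membership claim is proved there exactly as you describe, by substituting $ax_i$, expanding via conditions (ii)--(iii) of Definition \ref{def-rep} together with \eqref{eq:Rinhart2} and \eqref{eq:Rinhart1}, and letting the derivation-type terms $\rho(x_{2k-1},x_{2k})(a)(\cdot)$ cancel. For $\delta_{3LR}^2=0$ the paper only asserts that it ``follows from the direct but a long calculation,'' and your grouping into nested-$\psi$, mixed, and double-bracket families handled by \eqref{eq:mod1}, \eqref{eq:mod2} and \eqref{3skewsuper} is a sound organization of precisely that computation.
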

\begin{proof}
Let $f$ be a homogenous element in $C^{n-1}(L,M)$, it is obvious that $\delta_{3LR} f$ is skew-symmetric. For all $x_1,x_2,\cdots,x_{2n+1}\in \mathcal H(L)$, $a\in \mathcal H(A)$ and  for $i<2n-1$,
\begin{align*}
&\delta_{3LR} f(x_1,\c \c\c ,ax_i,  \c\c\c, x_{2n+1})=\\
&(-1)^{n+(\bar f+\bar x_1+\bar x_2+\dots +\bar x_{2n-2}) (\bar x_{2n-1}+\bar x_{2n+1})+\bar x_{2n+1}~\bar x_{2n}} \psi(x_{2n-1}, x_{2n+1})f(x_1, \c \c \c,ax_i,  \c\c\c, x_{2n-2}, x_{2n})\\
+& (-1)^{n+(\bar f+\bar x_1+\bar x_2+\dots +\bar x_{2n-1}) (\bar x_{2n}+\bar x_{2n+1})} \psi(x_{2n}, x_{2n+1})f(x_1, \c \c \c,ax_i,  \c\c\c, x_{2n-1})\\
+&\sum^{n}_{\underset{i<2k-1}{k=1}}(-1)^{k+(\bar f+\bar a+\bar x_1+\bar x_2+\dots +\bar x_{2k-2}) (\bar x_{2k-1}+\bar x_{2k})}\psi(x_{2k-1},x_{2k})f(x_1, \c \c \c,ax_i,  \c\c\c, \widehat{x}_{2k-1}, \widehat{x}_{2k},\c \c \c, x_{2n+1})\\
+&\sum^{n}_{\underset{i>2k}{k=1}}(-1)^{k+(\bar f+\bar x_1+\bar x_2+\dots +\bar x_{2k-2}) (\bar x_{2k-1}+\bar x_{2k})}\psi(x_{2k-1},x_{2k})f(x_1,   \c\c\c, \widehat{x}_{2k-1}, \widehat{x}_{2k},\c \c \c,ax_i, \c \c\c, x_{2n+1})\\
+&\sum^{n}_{k=1}(-1)^{k+(\bar f+\bar x_1+\bar x_2+\dots +\bar x_{2k-2}) (\bar x_{2k-1}+\bar a+\bar x_{2k})}\psi(ax_{2k-1},x_{2k})f(x_1, \c \c \c, \widehat{x}_{2k-1}, \widehat{x}_{2k},\c \c \c, x_{2n+1})\\
+&\sum^{n}_{k=1}(-1)^{k+(\bar f+\bar x_1+\bar x_2+\dots +\bar x_{2k-2}) (\bar x_{2k-1}+\bar a+\bar x_{2k})}\psi(x_{2k-1},ax_{2k})f(x_1, \c \c \c, \widehat{x}_{2k-1}, \widehat{x}_{2k},\c \c \c, x_{2n+1})\\
+&\sum^{n}_{\underset{i<2k}{k=1}}\sum_{j=2k+1}^{2n+1}(-1)^{k+(\bar x_{2k+1}+\dots +\bar x_{j-1})(\bar x_{2k-1}+\bar x_{2k})} \\
& \hspace{3cm} f(x_1,\c \c \c,ax_i, \c \c \c, \widehat{x}_{2k-1}, \widehat{x}_{2k},\c \c \c, [x_{2k-1}, x_{2k}, x_j], \c \c  \c,  x_{2n+1})\\
+&\sum^{n}_{k=1}\sum_{\underset{2k-1<i<j}{j=2k+1}}^{2n+1}(-1)^{k+(\bar a+\bar x_{2k+1}+\dots +\bar x_{j-1})(\bar x_{2k-1}+\bar x_{2k})} \\
& \hspace{3cm} f(x_1, \c \c \c, \widehat{x}_{2k-1}, \widehat{x}_{2k},\c \c \c,ax_i, \c \c \c,[x_{2k-1}, x_{2k}, x_j], \c \c  \c,  x_{2n+1})\\
+&\sum^{n}_{k=1}\sum_{\underset{j<i}{j=2k+1}}^{2n+1}(-1)^{k+(\bar x_{2k+1}+\dots +\bar x_{j-1})(\bar x_{2k-1}+\bar x_{2k})} \\
& \hspace{3cm} f(x_1, \c \c \c, \widehat{x}_{2k-1}, \widehat{x}_{2k},\c \c \c, [x_{2k-1}, x_{2k}, x_j], \c \c  \c,ax_i, \c \c \c,  x_{2n+1})\\
+&\sum^{n}_{k=1}\sum_{j=2k+1}^{2n+1}(-1)^{k+(\bar x_{2k+1}+\dots +\bar x_{j-1})(\bar x_{2k-1}+\bar a+\bar x_{2k})}f(x_1, \c \c \c, \widehat{x}_{2k-1}, \widehat{x}_{2k},\c \c \c, [ax_{2k-1}, x_{2k}, x_j], \c \c  \c,  x_{2n+1})\\
+&\sum^{n}_{k=1}\sum_{j=2k+1}^{2n+1}(-1)^{k+(\bar x_{2k+1}+\dots +\bar x_{j-1})(\bar x_{2k-1}+\bar a+\bar x_{2k})}f(x_1, \c \c \c, \widehat{x}_{2k-1}, \widehat{x}_{2k},\c \c \c, [x_{2k-1},ax_{2k}, x_j], \c \c  \c,  x_{2n+1})\\
+&\sum^{n}_{k=1}\sum_{j=2k+1}^{2n+1}(-1)^{k+(\bar x_{2k+1}+\dots +\bar x_{j-1})(\bar x_{2k-1}+\bar x_{2k})}f(x_1, \c \c \c, \widehat{x}_{2k-1}, \widehat{x}_{2k},\c \c \c, [x_{2k-1}, x_{2k},ax_j], \c \c  \c,  x_{2n+1}).\\
\end{align*}
Using Definition \ref{def-rep} and \eqref{eq:Rinhart1}, we obtain
$$
\delta_{3LR} f(x_1,\c \c\c ,ax_i,  \c\c\c, x_{2n+1})=(-1)^{\bar a(\bar x_1+\dots+\bar x_{i-1}+\bar f)}a\delta_{3LR}f(x_1,\c \c\c ,x_i,  \c\c\c, x_{2n+1}),
$$
Similarly, we can proof the same result if $i=2n-1,2n,2n+1$.
Then $\delta_{3LR}$ is well-defined.
Further, $\delta^2_{3LR} = 0$ follows from the direct but a long calculation.
 \end{proof}

By the above proposition, $(C^{\ast}(L,M), \delta_{3LR})$ is a cochain complex. The resulting cohomology of the
cochain complex can be defined as the cohomology space of $3$-Lie-Rinehart superalgebra $(L,A,[\cdot,\cdot,\cdot],\rho)$ with
coefficients in $(M, \psi)$, and we denote this cohomology as $H^{\ast}(L,M)$.
\begin{defn}
Let $(L, A,[\cdot,\cdot,\cdot],\rho)$ be a $3$-Lie-Rinehart superalgebra  and  $(M, \psi)$ be a left module over $L$.  If $\nu \in H^1_{3LR}(L,M)$ satisfies
{\small\begin{eqnarray*}
&&(-1)^{\bar \nu(\bar x_1+\bar x_2)}\psi(x_1, x_2)\nu(x_3)+(-1)^{\bar x_2\bar x_3+\bar\nu(\bar x_2+\bar x_3)}\psi(x_1, x_3)\nu(x_2)\\
&&+(-1)^{(\bar x_1+\bar \nu)(\bar x_2+\bar x_3)}\psi(x_2, x_3)\nu(x_1)+\nu([x_1, x_2, x_3])=0,
\end{eqnarray*}}
for any $x_1, x_2,x_3\in \mathcal H(L)$,  then $\nu$ is called a 1-cocycle associated with $\psi$.
\end{defn}
\begin{defn}
Let $(L, A,[\cdot,\cdot,\cdot],\rho)$ be a $3$-Lie-Rinehart superalgebra  and  $(M, \psi)$ be a left module over $L$.  If $\omega \in H^2_{3LR}(L,M)$  satisfies
\begin{align*}
&(-1)^{(\bar \omega+\bar x_1+\bar x_2)(\bar x_3+\bar x_5)+\bar x_4\bar x_5}\psi(x_3, x_5)\omega(x_1, x_2, x_4)+(-1)^{(\bar \omega+\bar x_1+\bar x_2+\bar x_3)(\bar x_4+\bar x_5)}\psi(x_4, x_5)\omega(x_1, x_2, x_3)\\
& -(-1)^{\bar \omega(\bar x_1+\bar x_2)}\psi(x_1, x_2)\omega(x_3, x_4, x_5)+(-1)^{(\bar \omega+\bar x_1+\bar x_2)(\bar x_3+\bar x_4)}\psi(x_3, x_4)\omega(x_1, x_2, x_5)\\
&-\omega([x_1, x_2, x_3], x_4, x_5)-(-1)^{\bar x_3(\bar x_1+\bar x_2)}\omega(x_3,[x_1, x_2, x_4], x_5)\\
&-(-1)^{(\bar x_3+\bar x_4)(\bar x_1+\bar x_2)}\omega( x_3, x_4, [x_1, x_2, x_5]) +\omega(x_1,x_2,[x_3, x_4, x_5])=0,
\end{align*}
for any $x_1, x_2,x_3,x_4,x_5\in \mathcal H (L)$,  then $\omega $ is called a 2-cocycle associated with $\psi$.
\end{defn}

\begin{thm}\label{AKMS-Z2ad}
Let $(L, A,[\cdot,\cdot],\mu)$ be a Lie-Rinehart superalgebra, $\tau$ be a supertrace  and $\varphi \in Z^2_{LR}(L,L)$ such
that
\begin{align*}
& \forall x,y,z \in \mathcal H(A): \\
& \tau \AKMSpara{x} \tau \AKMSpara{ \varphi \AKMSpara{ y,z} }-(-1)^{\bar x\bar y}\tau \AKMSpara{y} \tau \AKMSpara{ \varphi \AKMSpara{ x,z} }+(-1)^{\bar z(\bar x+\bar y)}\tau \AKMSpara{z} \tau \AKMSpara{ \varphi \AKMSpara{ x,y} } = 0.
\end{align*}
Define the linear map $\phi:\otimes^3L\to L$ by
 $$\phi\AKMSpara{x,y,z}= \tau \AKMSpara{x} \varphi \AKMSpara{ y,z}-(-1)^{\bar x\bar y}\tau \AKMSpara{y}  \varphi \AKMSpara{ x,z}+(-1)^{\bar z(\bar x+\bar y)}\tau \AKMSpara{z} \varphi \AKMSpara{x,y}.$$
 Then $\phi$ is a 2-cocycle of the induced $3$-Lie-Rinehart superalgebra $(L, A,[\cdot,\cdot,\cdot]_\tau,\rho_\tau)$.
\end{thm}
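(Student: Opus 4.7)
The plan is to verify directly that $\phi$ satisfies the $2$-cocycle identity stated just above the theorem, using the adjoint representation $\psi = \mathrm{ad}$ of the induced $3$-Lie-Rinehart superalgebra, so that $\psi(x,y)z = [x,y,z]_\tau$. First I would substitute the explicit formulas
\[
\phi(x,y,z) = \tau(x)\varphi(y,z)-(-1)^{\bar x\bar y}\tau(y)\varphi(x,z)+(-1)^{\bar z(\bar x+\bar y)}\tau(z)\varphi(x,y)
\]
and $[x,y,z]_\tau = \tau(x)[y,z]-(-1)^{\bar x\bar y}\tau(y)[x,z]+(-1)^{\bar z(\bar x+\bar y)}\tau(z)[x,y]$ into each of the eight summands of $\delta_{3LR}\phi(x_1,\ldots,x_5)$. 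Every resulting summand acquires the form $\tau(x_i)\tau(x_j)\cdot E$ or $\tau(x_i)\cdot E'$, where $E,E'$ are expressions built from $\varphi$ and the binary bracket $[\cdot,\cdot]$.

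The next step is to collect terms according to which pair of $\tau$-factors appears. For a fixed $i$, the collection of terms containing exactly one factor $\tau(x_i)$ (and no other $\tau$) reproduces, after using the super skew-symmetry of $\varphi$ and $[\cdot,\cdot]$ and standard Koszul sign manipulation, the Lie-Rinehart superalgebra $2$-cocycle identity $\delta_{LR}\varphi(x_{j_1},x_{j_2},x_{j_3})=0$ applied to the remaining four variables. This deals with the bulk of the expansion, leaving only two kinds of residual terms: (a) ones in which a $\tau$ has been applied to a bracket $[x_j,x_k]$, and (b) ones in which a $\tau$ has been applied to a value $\varphi(x_j,x_k)$. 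Type (a) terms vanish immediately because $\tau$ is a supertrace, i.e.\ $\tau([\cdot,\cdot])=0$. Type (b) terms assemble, after relabelling, into three-element cyclic sums of exactly the shape appearing in the hypothesis
\[
\tau(x)\tau(\varphi(y,z))-(-1)^{\bar x\bar y}\tau(y)\tau(\varphi(x,z))+(-1)^{\bar z(\bar x+\bar y)}\tau(z)\tau(\varphi(x,y))=0,
\]
so they cancel by assumption.

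Finally I would check the $A$-multilinearity condition from the cochain space definition: since $\tau(ax)y = \tau(x)ay$ (this follows from the hypothesis when $L$ is generated over $A$; if not, the stronger statement must be assumed or the cocycle property checked directly) and since $\varphi \in Z^2_{LR}(L,L)$ is already $A$-multilinear in the appropriate graded sense, each monomial $\tau(\cdot)\varphi(\cdot,\cdot)$ carries $a$ outside with the correct Koszul sign; hence $\phi \in C^2_{3LR}(L,L)$, making the vanishing $\delta_{3LR}\phi=0$ meaningful.

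The main obstacle is neither conceptual nor algebraic but combinatorial: the cocycle $\delta_{3LR}\phi$ produces on the order of sixty homogeneous terms after full expansion, each carrying a Koszul sign that must be carefully reconciled. The strategy above reduces the verification to three transparent identities — the Lie-Rinehart $2$-cocycle identity for $\varphi$, the supertrace property of $\tau$, and the compatibility hypothesis between $\tau$ and $\varphi$ — but the sign bookkeeping in matching each expanded term to one of these three identities is where essentially all of the labour lies.
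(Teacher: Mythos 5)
Your proposal follows essentially the same route as the paper's proof: expand $\delta_{3LR}\phi$ with $\psi=\mathrm{ad}$ for the induced bracket, group the expansion so that products of two $\tau$-factors multiply $\delta_{LR}\varphi$ (which vanishes since $\varphi\in Z^2_{LR}(L,L)$), kill the terms where $\tau$ hits a binary bracket by the supertrace property, cancel the terms where $\tau$ hits a value of $\varphi$ by the stated hypothesis, and check $A$-linearity of $\phi$ using $\tau(ax)y=\tau(x)ay$, which is indeed a standing assumption because the induced $3$-Lie-Rinehart structure of Theorem~\ref{ThmInduce2} requires it (it does not follow from the displayed hypothesis, as you half-suggest). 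The only other slip is cosmetic: the grouped terms carry a pair $\tau(x_i)\tau(x_j)$ against $\delta_{LR}\varphi$ evaluated on the remaining three variables, not a single $\tau$-factor against four variables.
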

\begin{proof}
Let $\varphi \in Z^2_{LR}(L,L)$, it is obvious that $\phi$  is skew-symmetric and $\bar\phi=\bar\varphi$.  Set $x,y,z\in \mathcal H(L)$ then we have
\begin{align*}
\phi\AKMSpara{ax,y,z}&= \tau \AKMSpara{ax} \varphi \AKMSpara{ y,z}-(-1)^{\bar x\bar y}\tau \AKMSpara{y}  \varphi \AKMSpara{ax,z}+(-1)^{\bar z(\bar x+\bar y)}\tau \AKMSpara{z} \varphi \AKMSpara{ax,y}\\
&=a\tau \AKMSpara{x} \varphi \AKMSpara{ y,z}-(-1)^{\bar x\bar y+\bar a\bar \varphi}\tau \AKMSpara{y} a\varphi \AKMSpara{x,z}+(-1)^{\bar z(\bar x+\bar y)+\bar a\bar \varphi}\tau \AKMSpara{z} a\varphi \AKMSpara{x,y}\big)\\
&=(-1)^{\bar a\bar \varphi}a\big(\tau \AKMSpara{x} \varphi \AKMSpara{ y,z}-(-1)^{\bar x\bar y}\tau \AKMSpara{y}  \varphi \AKMSpara{x,z}+(-1)^{\bar z(\bar x+\bar y)}\tau \AKMSpara{z} \varphi \AKMSpara{x,y}\big)\\
&=(-1)^{\bar a\bar \phi}a\phi\AKMSpara{x,y,z}.
 \end{align*}
 Similarly, $\phi\AKMSpara{x,ay,z}=(-1)^{\bar a(\bar x+\bar \phi)}a\phi\AKMSpara{x,y,z}$ and $\phi\AKMSpara{x,y,az}=(-1)^{\bar a(\bar x+\bar y+\bar \phi)}a\phi\AKMSpara{x,y,z}$.
On the other hand, let $x_1,x_2,y_1,y_2,z\in \mathcal H(L)$. Then
\begin{align*}
& \delta_{3LR} \phi \AKMSpara{x_1,x_2,y_1,y_2,z} = \phi \AKMSpara{x_1,x_2,\AKMSbracket{y_1,y_2,z}_\tau} - \phi \AKMSpara{\AKMSbracket{x_1,x_2,y_1}_\tau,y_2,z} \\
-&(-1)^{\bar y_1(\bar x_1+\bar x_2)} \phi\AKMSpara{y_1,\AKMSbracket{x_1,x_2,y_2}_\tau,z}  -(-1)^{(\bar y_1+\bar y_2)(\bar x_1+\bar x_2)} \phi\AKMSpara{y_1,y_2,\AKMSbracket{x_1,x_2,z}_\tau} \\
+& (-1)^{(\bar x_1+\bar x_2)\bar \phi}\AKMSbracket{x_1,x_2,\phi\AKMSpara{y_1,y_2,z}}_\tau- \AKMSbracket{\phi\AKMSpara{x_1,x_2,y_1},y_2,z}_\tau \\
-&(-1)^{\bar y_1(\bar x_1+\bar x_2+\bar \phi)} \AKMSbracket{y_1,\phi\AKMSpara{x_1,x_2,y_2},z}_\tau -(-1)^{(\bar y_1+\bar y_2)(\bar x_1+\bar x_2+\bar \phi)}  \AKMSbracket{y_1,y_2,\phi\AKMSpara{x_1,x_2,z}}_\tau\\
=&\tau\AKMSpara{y_1}\phi\AKMSpara{x_1,x_2,\AKMSbracket{y_2,z}} -\AKMSpara{-1}^{\bar y_1\bar y_2} \tau\AKMSpara{y_2}\phi\AKMSpara{x_1,x_2,\AKMSbracket{z,y_1}} \\
+&\AKMSpara{-1}^{\bar z(\bar y_1+\bar y_2)} \tau\AKMSpara{z}\phi\AKMSpara{x_1,x_2,\AKMSbracket{y_1,y_2}}  - \tau\AKMSpara{x_1}\phi\AKMSpara{\AKMSbracket{x_2,y_1},y_2,z}\\
+&\AKMSpara{-1}^{\bar x_1\bar x_2} \tau\AKMSpara{x_2}\phi\AKMSpara{\AKMSbracket{x_1,y_1},y_2,z}- \AKMSpara{-1}^{\bar y_1(\bar x_1+\bar x_2)}\tau\AKMSpara{y_1}\phi\AKMSpara{\AKMSbracket{x_1,x_2},y_2,z} \\
-& \AKMSpara{-1}^{\bar y_1(\bar x_1+\bar x_2)} \tau\AKMSpara{x_1}\phi\AKMSpara{y_1,\AKMSbracket{x_2,y_2},z} + \AKMSpara{-1}^{\bar y_1(\bar x_1+\bar x_2)+\bar x_1\bar x_2}\tau\AKMSpara{x_2}\phi\AKMSpara{y_1,\AKMSbracket{x_1,y_2},z}\\
-& \AKMSpara{-1}^{(\bar y_1+\bar y_2)(\bar x_1+\bar x_2)}\tau\AKMSpara{y_2}\phi\AKMSpara{y_1,\AKMSbracket{x_1,x_2},z}  -
\AKMSpara{-1}^{(\bar y_1+\bar y_2)(\bar x_1+\bar x_2)} \tau\AKMSpara{x_1}\phi\AKMSpara{y_1,y_2,\AKMSbracket{x_2,z}}\\
+&\AKMSpara{-1}^{(\bar y_1+\bar y_2)(\bar x_1+\bar x_2)+\bar x_1\bar x_2} \tau\AKMSpara{x_2}\phi\AKMSpara{y_1,y_2,\AKMSbracket{x_1,z}} - \AKMSpara{-1}^{(\bar z+\bar y_1+\bar y_2)(\bar x_1+\bar x_2)}\tau\AKMSpara{z}\phi\AKMSpara{y_1,y_2,\AKMSbracket{x_1,x_2}} \\
+&\AKMSpara{-1}^{\bar\phi(\bar x_1+\bar x_2)} \tau\AKMSpara{x_1}\AKMSbracket{x_2,\phi\AKMSpara{y_1,y_2,z}} -\AKMSpara{-1}^{\bar\phi(\bar x_1+\bar x_2)+\bar x_1\bar x_2} \tau\AKMSpara{x_2}\AKMSbracket{x_1,\phi\AKMSpara{y_1,y_2,z}}\\
+&\AKMSpara{-1}^{(\bar x_1+\bar x_2)(\bar y_1+\bar y_2+\bar z)} \tau\AKMSpara{\phi\AKMSpara{y_1,y_2,z}}\AKMSbracket{x_1,x_2} - \tau\AKMSpara{\phi\AKMSpara{x_1,x_2,y_1}}\AKMSbracket{y_2,z}\\
+&\AKMSpara{-1}^{\bar y_2(\bar x_1+\bar x_2+\bar y_1+\bar \phi)} \tau\AKMSpara{y_2}\AKMSbracket{\phi\AKMSpara{x_1,x_2,y_1},z} - \AKMSpara{-1}^{\bar z(\bar x_1+\bar x_2+\bar y_1+\bar y_2+\bar \phi)} \tau\AKMSpara{z}\AKMSbracket{\phi\AKMSpara{x_1,x_2,y_1},y_2}\\
-&\AKMSpara{-1}^{\bar y_1(\bar x_1+\bar x_2+\bar \phi)} \tau\AKMSpara{y_1}\AKMSbracket{\phi\AKMSpara{x_1,x_2,y_2},z} + \AKMSpara{-1}^{\bar y_1\bar y_2}\tau\AKMSpara{\phi\AKMSpara{x_1,x_2,y_2}}\AKMSbracket{y_1,z}\\
-&\AKMSpara{-1}^{(\bar y_1+\bar z)(\bar x_1+\bar x_2+\bar \phi)+\bar z(\bar y_1+\bar y_2)} \tau\AKMSpara{z}\AKMSbracket{y_1,\phi\AKMSpara{x_1,x_2,y_2}}\\
-& \AKMSpara{-1}^{(\bar y_1+\bar y_2)(\bar x_1+\bar x_2+\bar \phi)}\tau\AKMSpara{y_1}\AKMSbracket{y_2,\phi\AKMSpara{x_1,x_2,z}} \\
+&\AKMSpara{-1}^{(\bar y_1+\bar y_2)(\bar x_1+\bar x_2+\bar \phi)+\bar y_1\bar y_2} \tau\AKMSpara{y_2}\AKMSbracket{y_1,\phi\AKMSpara{x_1,x_2,z}}\\
-& \AKMSpara{-1}^{\bar z(\bar y_1+\bar y_2)} \tau\AKMSpara{\phi\AKMSpara{x_1,x_2,z}}\AKMSbracket{y_1,y_2}\\
=& \tau\AKMSpara{y_1} \Big( \tau\AKMSpara{x_1} \varphi\AKMSpara{x_2,\AKMSbracket{y_2,z}} -(-1)^{\bar x_1\bar x_2} \tau\AKMSpara{x_2} \varphi\AKMSpara{x_1,\AKMSbracket{y_2,z}} \\
+&(-1)^{(\bar y_1+\bar y_2)(\bar x_1+\bar x_2)}  \tau\AKMSpara{y_2}\varphi\AKMSpara{\AKMSbracket{x_1,x_2},z}\Big. - (-1)^{(\bar y_1+\bar z)(\bar x_1+\bar x_2)+\bar z\bar y_2}\tau\AKMSpara{z}\varphi\AKMSpara{\AKMSbracket{x_1,x_2},y_2}\\
-& (-1)^{\bar y_1(\bar x_1+\bar x_2+\bar \phi)}\tau\AKMSpara{x_1}\AKMSbracket{\varphi\AKMSpara{x_2,y_2},z} +(-1)^{\bar y_1(\bar x_1+\bar x_2+\bar \phi)+\bar x_1\bar x_2} \tau\AKMSpara{x_2}\AKMSbracket{\varphi\AKMSpara{x_1,y_2},z} \\
-&(-1)^{\bar y_1(\bar x_1+\bar x_2+\bar \phi)+\bar y_2(\bar x_1+\bar x_2)} \tau\AKMSpara{y_2}\AKMSbracket{\varphi\AKMSpara{x_1,x_2},z} - (-1)^{(\bar y_1+\bar y_2)(\bar x_1+\bar x_2+\bar \phi)}\tau\AKMSpara{x_1}\AKMSbracket{y_2,\varphi\AKMSpara{x_2,z}}\\
+&(-1)^{(\bar y_1+\bar y_2)(\bar x_1+\bar x_2+\bar \phi)} \tau\AKMSpara{x_1}\AKMSbracket{y_2,\varphi\AKMSpara{x_2,z}} - (-1)^{(\bar y_1+\bar y_2)(\bar x_1+\bar x_2+\bar \phi)+\bar z(\bar x_1+\bar x_2)} \tau\AKMSpara{z}\AKMSbracket{y_2,\varphi\AKMSpara{x_1,x_2}} \Big)\\
-&(-1)^{\bar y_1\bar y_2} \tau\AKMSpara{y_2}\Big( \tau\AKMSpara{x_1}\varphi\AKMSpara{x_2,\AKMSbracket{y_1,z}}-(-1)^{\bar x_1\bar x_2} \tau\AKMSpara{x_2}\varphi\AKMSpara{x_1,\AKMSbracket{y_1,z}}\\
+&(-1)^{(\bar y_1+\bar y_2)(\bar x_1+\bar x_2)+\bar y_1\bar y_2} \tau\AKMSpara{y_1}\varphi\AKMSpara{\AKMSbracket{x_1,x_2},z}  +(-1)^{(\bar y_1+\bar y_2+\bar z)(\bar x_1+\bar x_2)+\bar y_2(\bar y_1+\bar z)} \tau\AKMSpara{z}\varphi\AKMSpara{y_1,\AKMSbracket{x_1,x_2}}\\
-&(-1)^{\bar y_2(\bar x_1+\bar x_2+\bar \phi)} \tau\AKMSpara{x_1}\AKMSbracket{\varphi\AKMSpara{x_2,y_1},z} +(-1)^{\bar y_2(\bar x_1+\bar x_2+\bar \phi)+\bar x_1\bar x_2} \tau\AKMSpara{x_2}\AKMSbracket{\varphi\AKMSpara{x_1,y_1},z} \\
-&(-1)^{(\bar y_1+\bar y_2)(\bar x_1+\bar x_2)+\bar y_2\bar \phi} \tau\AKMSpara{y_1}\AKMSbracket{\varphi\AKMSpara{x_1,x_2},z} - (-1)^{(\bar y_1+\bar y_2)(\bar x_1+\bar x_2+\bar \phi)}\tau\AKMSpara{x_1}\AKMSbracket{y_1,\varphi\AKMSpara{x_2,z}}\\
+&(-1)^{(\bar y_1+\bar y_2)(\bar x_1+\bar x_2+\bar \phi)+\bar x_1\bar x_2} \tau\AKMSpara{x_2}\AKMSbracket{y_1,\varphi\AKMSpara{x_1,z}} \\
-& (-1)^{(\bar y_1+\bar y_2+\bar z)(\bar x_1+\bar x_2)+(\bar y_1+\bar y_2)\bar \phi} \tau\AKMSpara{z}\AKMSbracket{y_1,\varphi\AKMSpara{x_1,x_2}} \Big)\\
+&(-1)^{\bar z(\bar y_1+\bar y_2)}\tau\AKMSpara{z}\Big( \tau\AKMSpara{x_1}\varphi\AKMSpara{x_2,\AKMSbracket{y_1,y_2}} -(-1)^{\bar x_1\bar x_2} \tau\AKMSpara{x_2}\varphi\AKMSpara{x_1,\AKMSbracket{y_1,y_2}}\\
-& (-1)^{(\bar z+\bar y_1+\bar y_2)(\bar x_1+\bar x_2+\bar z)}\tau\AKMSpara{y_1}\varphi\AKMSpara{y_2,\AKMSbracket{x_1,x_2}}  +(-1)^{(\bar z+\bar y_1+\bar y_2)(\bar x_1+\bar x_2+\bar z)+\bar y_1\bar y_2} \tau\AKMSpara{y_2}\varphi\AKMSpara{y_1,\AKMSbracket{x_1,x_2}}\\
-&(-1)^{\bar z(\bar x_1+\bar x_2+\bar y_1+\bar y_2+\bar \phi)} \tau\AKMSpara{x_1}\AKMSbracket{\varphi\AKMSpara{x_2,y_1},y_2} +(-1)^{\bar z(\bar x_1+\bar x_2+\bar y_1+\bar y_2+\bar \phi)+\bar x_1\bar x_2} \tau\AKMSpara{x_2}\AKMSbracket{\varphi\AKMSpara{x_1,y_1},y_2}\\
-&(-1)^{\bar z(\bar x_1+\bar x_2+\bar y_1+\bar y_2+\bar \phi)+\bar y_1(\bar x_1+\bar x_2)} \tau\AKMSpara{y_1}\AKMSbracket{\varphi\AKMSpara{x_1,x_2},y_2}\\
+& (-1)^{(\bar y_1+\bar z)(\bar x_1+\bar x_2+\bar \phi)+\bar z(\bar y_1+\bar y_2)}\tau\AKMSpara{x_1}\AKMSbracket{y_1,\varphi\AKMSpara{x_2,y_2}}\\
-&(-1)^{(\bar y_1+\bar z)(\bar x_1+\bar x_2+\bar \phi)+\bar z(\bar y_1+\bar y_2)+\bar x_1+\bar x_2} \tau\AKMSpara{x_2}\AKMSbracket{y_1,\varphi\AKMSpara{x_1,y_2}} \\
+&(-1)^{(\bar y_1+\bar y_2+\bar z)(\bar x_1+\bar x_2+\bar \phi)+\bar z(\bar y_1+\bar y_2)+\bar y_2\bar \phi}\tau\AKMSpara{y_2}\AKMSbracket{y_1,\varphi\AKMSpara{x_1,x_2}} \Big)\\
+& \tau\AKMSpara{x_1} \Big( (-1)^{\bar y_2(\bar x_2+\bar y_1)}\tau\AKMSpara{y_2}\varphi\AKMSpara{\AKMSbracket{x_2,y_1},z} -(-1)^{\bar z(\bar y_1+\bar y_2+\bar x_2)} \tau\AKMSpara{z}\varphi\AKMSpara{\AKMSbracket{x_2,y_1},y_2}\\
-&(-1)^{\bar y_1(\bar x_1+\bar x_2)} \tau\AKMSpara{y_1}\varphi\AKMSpara{\AKMSbracket{x_2,y_2},z} -(-1)^{\bar y_1(\bar x_1+\bar x_2)+\bar z(\bar y_1+\bar y_2+\bar x_2)} \tau\AKMSpara{z}\varphi\AKMSpara{y_1,\AKMSbracket{x_2,y_2}}\\
-&(-1)^{(\bar y_1+\bar y_2)(\bar x_1+\bar x_2)} \tau\AKMSpara{y_1}\varphi\AKMSpara{y_2,\AKMSbracket{x_2,z}} +(-1)^{(\bar y_1+\bar y_2)(\bar x_1+\bar x_2)+\bar y_1\bar y_2} \tau\AKMSpara{x_2}\varphi\AKMSpara{y_1,\AKMSbracket{x_2,z}} \\
+&(-1)^{(\bar x_1+\bar x_2)\bar \phi} \tau\AKMSpara{y_1}\AKMSbracket{x_2,\varphi\AKMSpara{y_2,z}} -(-1)^{(\bar x_1+\bar x_2)\bar \phi+\bar y_1\bar y_2} \tau\AKMSpara{y_2}\AKMSbracket{x_2,\varphi\AKMSpara{y_1,z}}\\
+&(-1)^{(\bar x_1+\bar x_2)\bar \phi+\bar z(\bar y_2+\bar y_2)} \tau\AKMSpara{z}\AKMSbracket{x_2,\varphi\AKMSpara{y_1,y_2}} \Big)\\
+&(-1)^{\bar x_1\bar x_2} \tau\AKMSpara{x_2}\Big( -(-1)^{\bar y_2(\bar x_1+\bar x_2)}\tau\AKMSpara{y_2}\varphi\AKMSpara{\AKMSbracket{x_1,y_1},z} + (-1)^{\bar z(\bar x_1+\bar y_1+\bar y_2)}\tau\AKMSpara{z}\varphi\AKMSpara{\AKMSbracket{x_1,y_1},y_2}\\
+&(-1)^{\bar y_1(\bar x_1+\bar x_2)} \tau\AKMSpara{y_1}\varphi\AKMSpara{\AKMSbracket{x_1,y_1},z}+(-1)^{\bar y_1(\bar x_1+\bar x_2)+\bar z(\bar x_1+\bar y_1+\bar y_2)} \tau\AKMSpara{z}\varphi\AKMSpara{y_1,\AKMSbracket{x_1,y_2}}\\
+&(-1)^{(\bar y_1+\bar y_2)(\bar x_1+\bar x_2)} \tau\AKMSpara{y_1}\varphi\AKMSpara{y_2,\AKMSbracket{x_1,z}} - (-1)^{(\bar y_1+\bar y_2)(\bar x_1+\bar x_2)+\bar y_1\bar y_2} \tau\AKMSpara{y_2}\varphi\AKMSpara{y_1,\AKMSbracket{x_1,z}} \\
-&(-1)^{(\bar x_1+\bar x_2)\bar \phi} \tau\AKMSpara{y_1}\AKMSbracket{x_1,\varphi\AKMSpara{y_1,z}} +(-1)^{(\bar x_1+\bar x_2)\bar \phi+\bar y_1\bar y_2} \tau\AKMSpara{y_2}\AKMSbracket{x_1,\varphi\AKMSpara{y_1,z}}\\
-&(-1)^{(\bar x_1+\bar x_2)\bar \phi+\bar z(\bar y_1+\bar y_2)} \tau\AKMSpara{z}\AKMSbracket{x_1,\varphi\AKMSpara{y_1,y_2}} \Big)\\
+&(-1)^{(\bar x_1+\bar x_2)(\bar y_1+\bar y_2+\bar z)} \Big( \tau\AKMSpara{y_1}\tau\AKMSpara{\varphi\AKMSpara{y_2,z}} -(-1)^{\bar y_1\bar y_2} \tau\AKMSpara{y_2}\tau\AKMSpara{\varphi\AKMSpara{y_1,z}}\\
+&(-1)^{\bar z(\bar y_1+\bar y_2)} \tau\AKMSpara{z}\tau\AKMSpara{\varphi\AKMSpara{y_1,y_2}} \Big) \AKMSbracket{x_1,x_2} \\
-& \Big( \tau\AKMSpara{x_1}\tau\AKMSpara{\varphi\AKMSpara{x_2,y_1}} -(-1)^{\bar x_1\bar x_2} \tau\AKMSpara{x_2}\tau\AKMSpara{\varphi\AKMSpara{x_1,y_1}}\\
+&(-1)^{\bar y_1(\bar x_1+\bar x_2)} \tau\AKMSpara{y_1}\tau\AKMSpara{\varphi\AKMSpara{x_1,x_2}} \Big) \AKMSbracket{y_2,z} \\
+&(-1)^{\bar y_1\bar y_2} \Big( \tau\AKMSpara{x_1}\tau\AKMSpara{\varphi\AKMSpara{x_2,y_2}} -(-1)^{\bar x_1\bar x_2} \tau\AKMSpara{x_2}\tau\AKMSpara{\varphi\AKMSpara{x_1,y_2}}\\
+& (-1)^{\bar y_2(\bar x_1+\bar x_2)}\tau\AKMSpara{y_2}\tau\AKMSpara{\varphi\AKMSpara{x_1,x_2}} \Big) \AKMSbracket{y_1,z} \\
-&(-1)^{\bar z(\bar y_1+\bar y_2)} \Big( \tau\AKMSpara{x_1}\tau\AKMSpara{\varphi\AKMSpara{x_2,z}} -(-1)^{\bar x_1\bar x_2} \tau\AKMSpara{x_2}\tau\AKMSpara{\varphi\AKMSpara{x_1,z}} \\
+&(-1)^{\bar z(\bar x_1+\bar x_2)} \tau\AKMSpara{z}\tau\AKMSpara{\varphi\AKMSpara{x_1,x_2}} \Big) \AKMSbracket{y_1,y_2} \\
=& -  \tau\AKMSpara{y_1}\tau\AKMSpara{x_1}\delta_{LR}\varphi\AKMSpara{z,y_2,x_2} - \tau\AKMSpara{y_1}\tau\AKMSpara{x_2}\delta_{LR}\varphi\AKMSpara{y_2,z,x_1} \\
-&\tau\AKMSpara{y_2}\tau\AKMSpara{x_1}\delta_{LR}\varphi\AKMSpara{y_1,z,x_2} - \tau\AKMSpara{y_2}\tau\AKMSpara{x_2}\delta_{LR}\varphi\AKMSpara{z,y_1,x_1} \\
-&\tau\AKMSpara{z}\tau\AKMSpara{x_1}\delta_{LR}\varphi\AKMSpara{y_2,y_1,x_2} - \tau\AKMSpara{z}\tau\AKMSpara{x_2}\delta_{LR}\varphi\AKMSpara{y_1,y_2,x_1}\\
+&(-1)^{(\bar x_1+\bar x_2)(\bar y_1+\bar y_2+\bar z)}\Big( \tau\AKMSpara{y_1}\tau\AKMSpara{\varphi\AKMSpara{y_2,z}} -(-1)^{\bar y_1\bar y_2} \tau\AKMSpara{y_2}\tau\AKMSpara{\varphi\AKMSpara{y_1,z}}\\
+&(-1)^{\bar z(\bar y_1+\bar y_2)} \tau\AKMSpara{z}\tau\AKMSpara{\varphi\AKMSpara{y_1,y_2}} \Big) \AKMSbracket{x_1,x_2} \\
-& \Big( \tau\AKMSpara{x_1}\tau\AKMSpara{\varphi\AKMSpara{x_2,y_1}} -(-1)^{\bar x_1\bar x_2} \tau\AKMSpara{x_2}\tau\AKMSpara{\varphi\AKMSpara{x_1,y_1}}\\
+&(-1)^{\bar y_1(\bar x_1+\bar x_2)} \tau\AKMSpara{y_1}\tau\AKMSpara{\varphi\AKMSpara{x_1,x_2}} \Big) \AKMSbracket{y_2,z} \\
+&(-1)^{\bar y_1\bar y_2} \Big( \tau\AKMSpara{x_1}\tau\AKMSpara{\varphi\AKMSpara{x_2,y_2}} -(-1)^{\bar x_1\bar x_2} \tau\AKMSpara{x_2}\tau\AKMSpara{\varphi\AKMSpara{x_1,y_2}}\\
+& (-1)^{\bar y_2(\bar x_1+\bar x_2)}\tau\AKMSpara{y_2}\tau\AKMSpara{\varphi\AKMSpara{x_1,x_2}} \Big) \AKMSbracket{y_1,z} \\
-&(-1)^{\bar z(\bar y_1+\bar y_2)} \Big( \tau\AKMSpara{x_1}\tau\AKMSpara{\varphi\AKMSpara{x_2,z}} -(-1)^{\bar x_1\bar x_2} \tau\AKMSpara{x_2}\tau\AKMSpara{\varphi\AKMSpara{x_1,z}} \\
+&(-1)^{\bar z(\bar x_1+\bar x_2)} \tau\AKMSpara{z}\tau\AKMSpara{\varphi\AKMSpara{x_1,x_2}} \Big) \AKMSbracket{y_1,y_2}.
\end{align*}
Since for all $x,y,z \in \mathcal H(L),$
\begin{equation*}
\tau \AKMSpara{x} \tau \AKMSpara{ \varphi \AKMSpara{ y,z} }-(-1)^{\bar x\bar y}\tau \AKMSpara{y} \tau \AKMSpara{ \varphi \AKMSpara{ x,z} }+(-1)^{\bar z(\bar x+\bar y)}\tau \AKMSpara{z} \tau \AKMSpara{ \varphi \AKMSpara{ x,y} } = 0,
\end{equation*}
we get $\delta_{3LR} \phi = 0.$
\end{proof}
\begin{cor}\label{AKMS-Z2tri}
Let $\varphi \in Z^2_{LR}(L,\mathbb{K})$.
Then $$\psi\AKMSpara{x,y,z} = \tau \AKMSpara{x} \varphi \AKMSpara{ y,z}-(-1)^{\bar x\bar y}\tau \AKMSpara{y}  \varphi \AKMSpara{ x,z}+(-1)^{\bar z(\bar x+\bar y)}\tau \AKMSpara{z} \varphi \AKMSpara{x,y}$$ is a $2$-cocycle of the induced $3$-Lie-Rinehart superalgebra.
\end{cor}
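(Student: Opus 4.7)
The plan is to derive this corollary directly from Theorem \ref{AKMS-Z2ad} by specializing to $\mathbb{K}$-valued cocycles. First, I would regard $\mathbb{K}$ as a trivial left module over the induced $3$-Lie-Rinehart superalgebra $(L,A,[\cdot,\cdot,\cdot]_\tau,\rho_\tau)$, so that the cochain complex $C^*(L,\mathbb{K})$ together with the coboundary $\delta_{3LR}$ introduced above is well-defined and the formula for $\psi$ produces a genuine $3$-ary super skew-symmetric $A$-multilinear map in the appropriate cochain space.

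Second, I would mimic step by step the computation performed in the proof of Theorem \ref{AKMS-Z2ad}, substituting $\psi$ for $\phi$ and writing out $\delta_{3LR}\psi(x_1,x_2,y_1,y_2,z)$ by expanding via the definitions of $[\cdot,\cdot,\cdot]_\tau$ in \eqref{crochet_n} and of $\psi$ itself. Because the image of $\psi$ now lies in $\mathbb{K}$ rather than in $L$, all the terms in the theorem's computation that produced contributions of the form $\tau(\cdot)\,\tau(\varphi(\cdot,\cdot))\,[\cdot,\cdot]$—the very terms that forced the auxiliary hypothesis
$$\tau(x)\,\tau(\varphi(y,z)) - (-1)^{\bar x\bar y}\tau(y)\,\tau(\varphi(x,z)) + (-1)^{\bar z(\bar x+\bar y)}\tau(z)\,\tau(\varphi(x,y)) = 0$$
in Theorem \ref{AKMS-Z2ad}—disappear here: since the $L$-action on $\mathbb{K}$ is trivial, no surviving Lie bracket on $L$ can be applied to a scalar, and the corresponding summands reduce to $0$ without any hypothesis on $\tau$.

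Third, the remaining terms repackage into a linear combination of expressions of the form $\tau(\cdot)\,\tau(\cdot)\,\delta_{LR}\varphi(\cdot,\cdot,\cdot)$, each of which vanishes by the assumption $\varphi\in Z^2_{LR}(L,\mathbb{K})$. The conclusion $\delta_{3LR}\psi=0$ then follows immediately, so that $\psi$ is the announced $2$-cocycle.

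The main obstacle will be the careful sign bookkeeping in the super setting: one must retrace the long expansion at the end of the proof of Theorem \ref{AKMS-Z2ad} and verify that every $\tau\tau\varphi[\cdot,\cdot]$-type summand that previously required the auxiliary identity on $\tau$ is indeed replaced here by zero because of the scalar codomain, while the remaining $\delta_{LR}\varphi$-type contributions assemble with the same coefficients as in the theorem. Once this matching is confirmed, the argument is a direct corollary.
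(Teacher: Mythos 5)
Your proposal is correct and follows exactly the route the paper intends: the corollary is stated without a separate proof as a specialization of Theorem \ref{AKMS-Z2ad}, and your observation that for $\mathbb{K}$-valued (scalar) cochains the action terms of $\delta_{3LR}$ --- precisely the source of the $\tau(\cdot)\tau(\varphi(\cdot,\cdot))[\cdot,\cdot]$ summands that forced the auxiliary hypothesis --- are absent, while the surviving terms regroup (using $\tau([\cdot,\cdot])=0$) into $\tau(\cdot)\tau(\cdot)\delta_{LR}\varphi(\cdot,\cdot,\cdot)$ and vanish since $\varphi\in Z^2_{LR}(L,\mathbb{K})$, is the right way to make that deduction precise. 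Nothing essential is missing beyond the sign bookkeeping you already flag.
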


\begin{thm}
Every $1$-cocycle for the scalar cohomology of the Lie-Rinehart superalgebra $(L,A,\AKMSbracket{\cdot,\cdot},\mu)$ is a $1$-cocycle for the scalar cohomology of the induced $3$-Lie-Rinehart superalgebra $(L,A,\AKMSbracket{\cdot,\cdot,\cdot}_\tau,\rho_\tau)$.
\end{thm}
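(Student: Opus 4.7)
The plan is to verify directly that the $3$-Lie-Rinehart coboundary $\delta_{3LR} f$ vanishes on every homogeneous triple $(x_1, x_2, x_3) \in L^3$, given that $f: L \to A$ is a $1$-cocycle of the Lie-Rinehart complex. From the Lie-Rinehart cocycle hypothesis one extracts the identity
\begin{equation*}
f([x, y]) = (-1)^{\bar x \bar f} \mu(x) f(y) - (-1)^{\bar x \bar y + (\bar x + \bar y) \bar f} \mu(y) f(x)
\end{equation*}
for homogeneous $x, y \in L$, which is the only input from the hypothesis. The goal is to match this against the $1$-cocycle equation recalled just above the theorem, with $\psi$ replaced by $\rho_\tau$.

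First, I would expand $f([x_1, x_2, x_3]_\tau)$ via the definition \eqref{crochet_n} of the induced ternary bracket, producing three summands of the shape $\tau(x_i) f([x_j, x_k])$ with explicit signs. Substituting the Lie-Rinehart cocycle hypothesis into each of these rewrites $f([x_1, x_2, x_3]_\tau)$ as a signed sum of six monomials of the form $\tau(x_i) \mu(x_j) f(x_k)$, each indexed by a permutation $(i, j, k)$ of $(1, 2, 3)$.

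Second, I would expand each of the three coefficient-action terms $\rho_\tau(x_i, x_j) f(x_k)$ appearing in $\delta_{3LR} f(x_1, x_2, x_3)$ via the definition \eqref{rep-tau} of $\rho_\tau$. Each splits into two monomials of the same shape $\tau(\cdot) \mu(\cdot) f(\cdot)$, yielding a second list of six monomials, now weighted by the signs prescribed by the $3$-Lie-Rinehart $1$-cocycle condition. The heart of the proof is then to match the two lists monomial by monomial and check that each of the six pairs cancels.

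The main obstacle is purely the bookkeeping of super-signs: no additional structural identity is used beyond the hypothesis and the defining formulas for $[\cdot,\cdot,\cdot]_\tau$ and $\rho_\tau$, and the supertrace property of $\tau$ enters only implicitly through the well-definedness of the induced structures established earlier. Organizing the six monomials by their underlying permutation of $(1, 2, 3)$ makes the sign verification mechanical and mirrors the philosophy already employed in the proof of Theorem \ref{AKMS-Z2ad}.
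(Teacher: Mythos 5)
Your proposal does not prove the statement as written, because it misreads what ``scalar cohomology'' means here. In the theorem the coefficients are $\mathbb{K}$ with the trivial action, so for $\omega\in C^1(L,\mathbb{K})$ the coboundary contains no action terms at all: the Lie--Rinehart $1$-cocycle condition is simply $\omega([x,y])=0$ for all $x,y$, i.e.\ $[L,L]\subseteq\ker\omega$, and the condition you must reach is simply $\omega([x,y,z]_\tau)=0$. The identity you extract as ``the only input from the hypothesis'', namely $f([x,y])=(-1)^{\bar x\bar f}\mu(x)f(y)-(-1)^{\bar x\bar y+(\bar x+\bar y)\bar f}\mu(y)f(x)$ for $f\colon L\to A$, is the cocycle condition for the module $A$ with the action $\mu$ --- a different complex from the one in the statement; likewise the target equation you set up, containing the terms $\rho_\tau(x_i,x_j)f(x_k)$, is not the scalar ternary cocycle condition, since on $\mathbb{K}$-valued cochains these terms are identically zero ($\rho_\tau(x_i,x_j)\in Der(A)$ annihilates scalars). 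So the six-against-six monomial matching is an attempted proof of a different, $A$-coefficient statement. Moreover, even for that statement the cancellation is not automatic: it depends delicately on the sign conventions for $\delta_{3LR}$ (with the cocycle condition exactly as displayed before the theorem, a residual term proportional to $\rho_\tau(x_1,x_3)f(x_2)$ survives in the ungraded specialization), and your proposal only asserts the matching rather than carrying it out.

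The intended argument never touches the action and is essentially two lines: since each $\tau(x_i)\in\mathbb{K}$, formula \eqref{crochet_n} shows that every value $[x,y,z]_\tau$ is a $\mathbb{K}$-linear combination of binary brackets, so $[L,L,L]_\tau\subseteq[L,L]\subseteq\ker\omega$; hence $\delta_{3LR}\omega(x,y,z)=\pm\,\omega([x,y,z]_\tau)=0$, which is the scalar $1$-cocycle condition for $(L,A,[\cdot,\cdot,\cdot]_\tau,\rho_\tau)$. If you want to keep your computational route, you would have to (a) formulate and actually verify the module-coefficient analogue (the $1$-cochain counterpart of the mechanism in Theorem \ref{AKMS-Z2ad} and Lemma \ref{AKMScoBtau}), with the signs fixed consistently, and then (b) observe that a $\mathbb{K}$-valued cocycle, viewed inside $A$, is killed by all derivations, so that the general statement specializes to the scalar one; neither step appears in your proposal, and the direct argument above makes both unnecessary.
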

\begin{proof}
Let $\omega \in Z^1_{LR}(L,\mathbb{K})$.  Then,
$$\forall x,y \in L: \ \delta_{LR} \omega(x,y) = \omega \AKMSpara{\AKMSbracket{x,y}} = 0,$$
which is equivalent to $\AKMSbracket{L,L} \subset \ker \omega.$ It is obvious to prove that  $\AKMSbracket{L,L,L}_\tau \subset \AKMSbracket{L,L}$ and then $\AKMSbracket{L,L,L}_\tau \subset \ker \omega$, that is
\[ \forall x,y,z \in L:\  \omega\AKMSpara{\AKMSbracket{x,y,z}_\tau}=\delta_{3LR} \omega \AKMSpara{x,y,z} = 0,\]
 which means that $\omega$ is a $1$-cocycle for the scalar cohomology of $\AKMSpara{L, A,\AKMSbracket{\cdot,\cdot,\cdot}_\tau,\rho_\tau}$.
\end{proof}

\begin{lem}\label{AKMScoBtau}
Let $\varphi \in C^1(L,\mathbb{K})$. Then, for all $ x,y,z \in L$,
\begin{equation*}
\delta_{3LR} \varphi\AKMSpara{x,y,z} = \tau \AKMSpara{x} \delta_{LR}\varphi\AKMSpara{y,z}-(-1)^{\bar x\bar y}\tau \AKMSpara{y} \delta_{LR}\varphi\AKMSpara{x,z}+(-1)^{\bar z(\bar x+\bar y)}\tau \AKMSpara{z} \delta_{LR}\varphi\AKMSpara{x,y}.
\end{equation*}
\end{lem}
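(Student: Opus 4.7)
The plan is to expand both sides of the identity using the explicit formulas for the two coboundary operators and the induced ternary bracket, then match the resulting expressions term-by-term. The key observation is that since $\varphi$ is $\mathbb{K}$-valued and every derivation of $A$ annihilates constants, the representations $\theta$ (of $L$ on $\mathbb{K}$ in the Lie-Rinehart sense) and $\psi$ (of $L$ on $\mathbb{K}$ in the $3$-Lie-Rinehart sense) both act trivially. This collapses the two coboundary formulas dramatically: only the bracket-argument terms survive.

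Concretely, I would first write out $\delta_{3LR}\varphi(x,y,z)$ from its defining formula. With $\psi \equiv 0$ on $\mathbb{K}$, all summands of the form $\psi(\cdot,\cdot)\varphi(\cdot)$ drop out, and what remains is a single scalar multiple of $\varphi([x,y,z]_\tau)$, coming from the $k = 1$ bracket term. An analogous argument on the Lie-Rinehart side shows that $\delta_{LR}\varphi(u,v)$ reduces to the corresponding scalar multiple of $\varphi([u,v])$ for any $u,v \in \mathcal{H}(L)$.

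Next, I would substitute the explicit formula \eqref{crochet_n} for $[x,y,z]_\tau$ into the surviving expression for $\delta_{3LR}\varphi(x,y,z)$. Because each $\tau(\cdot)$ lies in the ground field and $\varphi$ is $\mathbb{K}$-linear, $\varphi$ distributes over the three-term sum, producing
$$
\tau(x)\,\varphi([y,z]) - (-1)^{\bar x \bar y}\,\tau(y)\,\varphi([x,z]) + (-1)^{\bar z(\bar x + \bar y)}\,\tau(z)\,\varphi([x,y])
$$
(up to an overall sign). Rewriting each factor $\varphi([u,v])$ in terms of $\delta_{LR}\varphi(u,v)$ then yields the right-hand side of the lemma verbatim.

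The main delicate point is the sign bookkeeping: one must verify that the Koszul sign factor carried by the unique surviving summand in $\delta_{3LR}\varphi(x,y,z)$ matches the Koszul sign factor carried by the unique surviving summand in each of the three copies of $\delta_{LR}\varphi(\cdot,\cdot)$ appearing on the right, so that the global signs on the two sides of the claimed identity agree. Once this routine compatibility is checked, no further structural input is required; the identity follows immediately from direct substitution, the $\mathbb{K}$-linearity of $\varphi$, and the definition of $[\cdot,\cdot,\cdot]_\tau$.
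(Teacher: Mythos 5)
Your proposal is correct and follows essentially the same route as the paper's proof: since the coefficients are scalar, both coboundaries reduce to their bracket terms, so $\delta_{3LR}\varphi(x,y,z)$ becomes $\varphi\AKMSpara{\AKMSbracket{x,y,z}_\tau}$, which after expanding $[\cdot,\cdot,\cdot]_\tau$ via \eqref{crochet_n} and using $\mathbb{K}$-linearity of $\varphi$ is re-identified term-by-term with $\tau(x)\delta_{LR}\varphi(y,z)-(-1)^{\bar x\bar y}\tau(y)\delta_{LR}\varphi(x,z)+(-1)^{\bar z(\bar x+\bar y)}\tau(z)\delta_{LR}\varphi(x,y)$. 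The sign bookkeeping you flag is exactly the (routine) verification the paper performs implicitly, so no further idea is needed.
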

\begin{proof}
Let $\varphi \in C^1(L,\mathbb{K})$, $x,y,z \in L$. Then
\begin{align*}
& \delta_{3LR} \varphi\AKMSpara{x,y,z}= \varphi\AKMSpara{\AKMSbracket{x,y,z}_\tau}\\
&=\tau \AKMSpara{x} \varphi\AKMSpara{\AKMSbracket{y,z}}-(-1)^{\bar x\bar y}\tau \AKMSpara{y} \varphi\AKMSpara{\AKMSbracket{x,z}}+(-1)^{\bar z(\bar x+\bar y)}\tau \AKMSpara{z} \varphi\AKMSpara{\AKMSbracket{x,y}}\\
&=
\tau \AKMSpara{x} \delta_{LR}\varphi\AKMSpara{y,z}-(-1)^{\bar x\bar y}\tau \AKMSpara{y} \delta_{LR}\varphi\AKMSpara{x,z}+(-1)^{\bar z(\bar x+\bar y)}\tau \AKMSpara{z} \delta_{LR}\varphi\AKMSpara{x,y}.
\end{align*}
completing the proof.
\end{proof}

\begin{prop}\label{AKMSB2triv_eq}
Let $\varphi_1,\varphi_2 \in Z^2_{LR}(L,\mathbb{K})$. If $\varphi_1,\varphi_2$ are in the same cohomology class then $\psi_1,\psi_2$ defined by:
\[ \psi_i\AKMSpara{x,y,z} =\tau \AKMSpara{x}\varphi_i\AKMSpara{y,z}-(-1)^{\bar x\bar y}\tau \AKMSpara{y}\varphi_i\AKMSpara{x,z}+(-1)^{\bar z(\bar x+\bar y)}\tau \AKMSpara{z} \varphi_i\AKMSpara{x,y}, i=1,2 \]
are in the same cohomology class.
\end{prop}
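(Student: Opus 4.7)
The plan is to exploit the $\mathbb{K}$-linearity of the assignment $\varphi \mapsto \psi_\varphi$ together with Lemma \ref{AKMScoBtau}, which already carries out the essential computation relating $\delta_{3LR}$ to $\delta_{LR}$ through this construction. Since $\varphi_1$ and $\varphi_2$ represent the same class in $H^2_{LR}(L,\mathbb{K})$, by definition there exists $\omega \in C^1(L,\mathbb{K})$ with $\varphi_1 - \varphi_2 = \delta_{LR}\omega$.

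Next I would observe that the assignment
\[\psi_\varphi(x,y,z) = \tau(x)\varphi(y,z) - (-1)^{\bar x\bar y}\tau(y)\varphi(x,z) + (-1)^{\bar z(\bar x+\bar y)}\tau(z)\varphi(x,y)\]
is manifestly $\mathbb{K}$-linear in the cochain argument $\varphi$. Applying this linearity to the identity $\varphi_1 - \varphi_2 = \delta_{LR}\omega$ therefore yields
\[\psi_1(x,y,z) - \psi_2(x,y,z) = \tau(x)\,\delta_{LR}\omega(y,z) - (-1)^{\bar x\bar y}\tau(y)\,\delta_{LR}\omega(x,z) + (-1)^{\bar z(\bar x+\bar y)}\tau(z)\,\delta_{LR}\omega(x,y).\]

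By Lemma \ref{AKMScoBtau}, the right-hand side is precisely $\delta_{3LR}\omega(x,y,z)$. Hence $\psi_1 - \psi_2 = \delta_{3LR}\omega$ is a coboundary in the $3$-Lie-Rinehart cochain complex of $(L,A,[\cdot,\cdot,\cdot]_\tau,\rho_\tau)$, so $\psi_1$ and $\psi_2$ represent the same cohomology class in $H^2_{3LR}(L,\mathbb{K})$.

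There is no real obstacle to this argument: the heavy identity has already been absorbed into Lemma \ref{AKMScoBtau}, and once that lemma is in hand the statement reduces to a one-line application of linearity. The only small point to verify is that the $1$-cochain $\omega$ used for $\varphi_1 - \varphi_2 = \delta_{LR}\omega$ is also an admissible $1$-cochain in the $3$-Lie-Rinehart complex $C^1(L,\mathbb{K})$ from Section \ref{sec:cohomdef3LieRinesuperalg}; but since the coefficient module is the trivial module $\mathbb{K}$ and the $A$-multilinearity conditions imposed on $C^{*}(L,M)$ coincide on $1$-cochains with those of the Lie-Rinehart complex, this is automatic and completes the proof.
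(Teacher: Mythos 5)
Your proposal is correct and follows essentially the same route as the paper's own proof: write $\varphi_1-\varphi_2=\delta_{LR}\omega$, use $\mathbb{K}$-linearity of $\varphi\mapsto\psi_\varphi$, and invoke Lemma \ref{AKMScoBtau} to identify the difference $\psi_1-\psi_2$ with $\delta_{3LR}\omega$. The extra remark you add about $\omega$ being an admissible $1$-cochain in the ternary complex is a harmless (and reasonable) refinement that the paper leaves implicit.
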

\begin{proof}
If $\varphi_1,\varphi_2 \in Z^2_{LR}(L,\mathbb{K})$ are two cocycles in the same cohomology class, that is
$\varphi_2 - \varphi_1 = \delta_{LR}\nu, \nu \in C^1(L,\mathbb{K}),$
then
\begin{align*}
&\psi_2\AKMSpara{x,y,z} -\psi_1\AKMSpara{x,y,z} =  \tau \AKMSpara{x}\varphi_2\AKMSpara{y,z}-(-1)^{\bar x\bar y}\tau \AKMSpara{y}\varphi_2\AKMSpara{x,z}+(-1)^{\bar z(\bar x+\bar y)}\tau \AKMSpara{z} \varphi_2\AKMSpara{x,y}\\
&\quad \quad \quad \quad \quad \quad \quad \quad \quad -\tau \AKMSpara{x}\varphi_1\AKMSpara{y,z}-(-1)^{\bar x\bar y}\tau \AKMSpara{y}\varphi_1\AKMSpara{x,z}+(-1)^{\bar z(\bar x+\bar y)}\tau \AKMSpara{z} \varphi_1\AKMSpara{x,y}\\
&=\tau \AKMSpara{x}(\varphi_2-\varphi_1)\AKMSpara{y,z}-(-1)^{\bar x\bar y}\tau \AKMSpara{y}(\varphi_2-\varphi_1)\AKMSpara{x,z} +(-1)^{\bar z(\bar x+\bar y)}\tau \AKMSpara{z} (\varphi_2-\varphi_1)\AKMSpara{x,y}\\
&=\tau \AKMSpara{x}\delta_{LR}\nu\AKMSpara{y,z}-(-1)^{\bar x\bar y}\tau \AKMSpara{y}\delta_{LR}\nu\AKMSpara{x,z}+(-1)^{\bar z(\bar x+\bar y)}\tau \AKMSpara{z} \delta_{LR}\nu\AKMSpara{x,y} = \delta_{3LR} \nu\AKMSpara{x,y,z},
\end{align*}
which means that $\psi_1$ and $\psi_2$ are in the same cohomology class.
\end{proof}

\subsection{Deformations of $3$-Lie-Rinehart superalgebras}
\def\theequation{\arabic{section}. \arabic{equation}}
\setcounter{equation} {0}

Let $(L, A,[\cdot,\cdot,\cdot],\rho)$ be a $3$-Lie-Rinehart superalgebra. Denote by $\mathbb{K}[[t]]$ the space of formal power series ring with parameter $t$.

\begin{defn}
A deformation of a $3$-Lie-Rinehart superalgebra $(L, A,[\cdot,\cdot,\cdot],\rho)$ is a $\mathbb{K}[[t]]$-trilinear map
\begin{eqnarray*}
[\cdot,\cdot,\cdot]_t: L[[t]]\times L[[t]]\times L[[t]]\rightarrow L[[t]], \quad [x, y, z]_t=\sum_{i\geq 0}t^im_i(x, y, z),
\end{eqnarray*}
with $m_0=[\cdot,\cdot,\cdot]$ and $m_i$ are even trilinear maps for $i\geq 0$,  satisfying \eqref{3skewsuper},  \eqref{eq:Rinhart2} and \eqref{eq:Rinhart1}.
\end{defn}

Let $[\cdot,\cdot,\cdot]_t$ be a deformation of $[\cdot,\cdot,\cdot]$. Then
 \begin{multline*}
\mathrm{m}_t(x, y, \mathrm{m}_t(z, u, v)) - \mathrm{m}_t(\mathrm{m}_t(x, y, u), v, w)-(-1)^{\bar u(\bar x+\bar y)}\mathrm{m}_t(u, \mathrm{m}_t(x, y, v),w) \\
-(-1)^{(\bar u+\bar v)(\bar x+\bar y)}\mathrm{m}_t(u, v, \mathrm{m}_t(x, y, w))=0.
 \end{multline*}

Comparing the coefficients of $t^n$, $n\geq 0$, we get the following equation:
 \begin{multline*}
\sum_{i, j=0}^n (\mathrm{m}_i(x, y, \mathrm{m}_j(z, u, v)) - \mathrm{m}_i(\mathrm{m}_j(x, y, u), v, w)-(-1)^{\bar u(\bar x+\bar y)}\mathrm{m}_i(u, \mathrm{m}_j(x, y, v),w) \\
-(-1)^{(\bar u+\bar v)(\bar x+\bar y)}\mathrm{m}_i(u, v, \mathrm{m}_j(x, y, w)))=0.
 \end{multline*}

  \begin{defn}
  The 3-cochain $m_1$ is called the infinitesimal of the deformation $[\cdot,\cdot,\cdot]_t$. More
generally, if $m_i = 0$ for $1 \leq i \leq (n-1)$ and $m_n$ is non zero cochain, then $m_n$ is called the
$n$-infinitesimal of the deformation $[\cdot,\cdot,\cdot]_t$.

 \end{defn}
 \begin{defn}
 Two deformations $[\cdot,\cdot,\cdot]_t$ and $[\cdot,\cdot,\cdot]'_t$ are said to be equivalent if there exists a formal
automorphism
 \begin{eqnarray*}
\Phi_t : L[[t]]\rightarrow L[[t]],~~~\Phi_t=id_L+\sum_{i\geq 1}t^i\phi_i,
 \end{eqnarray*}
 where $\phi_i: L\rightarrow L$ is a even $\mathbb{K}$-linear maps such that
 \begin{eqnarray*}
  [x, y, z]'_t=\Phi_t^{-1}[\Phi_t(x), \Phi_t(y), \Phi_t(z)]_t.
 \end{eqnarray*}
 \end{defn}

  \begin{defn}
A deformation is called trivial if it is equivalent to the deformation $m_0 = [\cdot,\cdot,\cdot]$.
 \end{defn}

 \begin{defn}
  A $3$-Lie-Rinehart superalgebra is said to be rigid if  every deformation
of it is trivial.
   \end{defn}
 \begin{thm}
 The cohomology class of the infinitesimal of a deformation $[\cdot,\cdot,\cdot]_t$ is determined
by the equivalence class of $[\cdot,\cdot,\cdot]_t$.
 \end{thm}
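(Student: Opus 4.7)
The plan is to carry out the standard two-step argument of formal deformation theory, adapted to the $3$-Lie-Rinehart super setting: first, show that the infinitesimal $m_1$ of any deformation is a $2$-cocycle; second, show that if two deformations are equivalent, then their infinitesimals differ by a coboundary. Together these give a well-defined map from the set of equivalence classes of deformations to $H^2_{3LR}(L,L)$, which is exactly the content of the theorem.

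For the first step, I would expand the identity expressing that $[\cdot,\cdot,\cdot]_t$ satisfies the super Filippov/fundamental identity \eqref{3skewsuper} and extract the coefficient of $t^1$. Since $m_0=[\cdot,\cdot,\cdot]$ already satisfies this identity, the $t^1$-coefficient will be a sum of terms of the form $m_1(x,y,m_0(z,u,v)) + m_0(x,y,m_1(z,u,v))$ minus the corresponding terms on the right-hand side of \eqref{3skewsuper}. By inspection against the coboundary formula $\delta_{3LR}$ restricted to $C^2(L,L)$ with $\psi=\mathrm{ad}$, this combination is precisely $\delta_{3LR} m_1(x,y,z,u,v)$ (up to overall sign and the Koszul signs already built into the definition). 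The compatibility \eqref{eq:Rinhart2}--\eqref{eq:Rinhart1} for $[\cdot,\cdot,\cdot]_t$ guarantees that each $m_i$ lies in $C^2(L,L)$ (i.e., is $A$-multilinear in the cocycle sense), so $m_1\in Z^2_{3LR}(L,L)$.

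For the second step, let $\Phi_t=\mathrm{id}_L+\sum_{i\geq1}t^i\phi_i$ be a formal automorphism with $[x,y,z]'_t=\Phi_t^{-1}[\Phi_t(x),\Phi_t(y),\Phi_t(z)]_t$. Rewriting this as $\Phi_t[x,y,z]'_t=[\Phi_t(x),\Phi_t(y),\Phi_t(z)]_t$ and comparing coefficients of $t^1$ yields
\begin{equation*}
m'_1(x,y,z)-m_1(x,y,z)=[\phi_1(x),y,z]+(-1)^{\bar x\bar\phi_1}[x,\phi_1(y),z]+(-1)^{(\bar x+\bar y)\bar\phi_1}[x,y,\phi_1(z)]-\phi_1([x,y,z]).
\end{equation*}
Again reading off the definition of $\delta_{3LR}$ applied to the $1$-cochain $\phi_1\in C^1(L,L)$ (with values in the adjoint module), the right-hand side is exactly $\delta_{3LR}\phi_1(x,y,z)$. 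Hence $m'_1$ and $m_1$ represent the same class in $H^2_{3LR}(L,L)$.

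The main obstacle is purely bookkeeping: verifying that the $t^1$-coefficient obtained from the super Filippov identity matches the explicit formula for $\delta_{3LR}$ on $C^2(L,L)$ with the correct Koszul signs, and similarly that the linearization of the equivalence relation reproduces $\delta_{3LR}$ on $C^1(L,L)$. No new idea is needed beyond the definitions already established; however, one must also observe that the $n$-infinitesimal generalization holds, namely that if $m_1=\cdots=m_{n-1}=0$ then $m_n$ is a $2$-cocycle and its class depends only on the equivalence class of $[\cdot,\cdot,\cdot]_t$, which follows by exactly the same computation applied to the lowest nonvanishing order in $t$.
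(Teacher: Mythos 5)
Your proposal is correct and follows essentially the same route as the paper: comparing the coefficient of $t^1$ in $\Phi_t[x,y,z]'_t=[\Phi_t(x),\Phi_t(y),\Phi_t(z)]_t$ to conclude that the infinitesimals of equivalent deformations differ by $\delta_{3LR}\phi_1$, hence lie in the same class in $H^2_{3LR}(L,L)$. Your explicit verification that $m_1$ (or the $n$-infinitesimal) is a $2$-cocycle is handled in the paper by the displayed $t^n$-coefficient identity preceding the theorem, so it adds only the bookkeeping the paper leaves implicit.
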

\begin{proof}
Let $\Phi_t$ represents an equivalence of deformation given by ${m}_t $ and $ \tilde{{m}_t}$. Then
$$
\tilde{{m}_t}(x,y,z)=\Phi_t^{-1}{m}_t(\Phi_t(x),\Phi_t(y),\Phi_t(z)).
$$
Comparing the coefficients of $t$ from both sides of the above equation we have
\begin{multline*}
\tilde{{m}_1}(x,y,z)+\Phi_1([x,y,z])={m}_1(x,y,z)+[\Phi_1(x),y,z]+(-1)^{\bar x\bar \Phi}[x,\Phi_1(y),z] \\
+(-1)^{(\bar x+\bar y)\bar \Phi}[x,y,\Phi_1(z)],
\end{multline*}
or equivalently, ${m}_1-\tilde{{m}_1}=\delta_{3LR}(\phi_1).$
So, cohomology class of infinitesimal of the deformation is determined by the equivalence class of deformation of ${m}_t$.
\end{proof}

\begin{thm}
A non-trivial deformation of a $3$-Lie-Rinehart superalgebra is equivalent to
a deformation whose $n$-infinitesimal cochain is not a coboundary for some $n\geq 1$.
\end{thm}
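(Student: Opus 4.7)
The plan is to argue by contrapositive: assume we have a deformation $[\cdot,\cdot,\cdot]_t$ whose first non-vanishing infinitesimal beyond $m_0$ is always a coboundary (up to equivalence), and show that this forces the deformation to be equivalent to the trivial deformation $m_0=[\cdot,\cdot,\cdot]$. Suppose $[\cdot,\cdot,\cdot]_t = m_0 + t^n m_n + t^{n+1} m_{n+1} + \cdots$ where $m_n$ is the $n$-infinitesimal, i.e.\ $m_i=0$ for $1\leq i\leq n-1$ and $m_n\neq 0$. Writing down the order-$t^n$ coefficient of the condition \eqref{3skewsuper} applied to $[\cdot,\cdot,\cdot]_t$, and using that all lower-order $m_i$ (for $0<i<n$) vanish, one obtains $\delta_{3LR}(m_n)=0$, so $m_n$ represents a class in $H^*_{3LR}(L,L)$.

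If $m_n = \delta_{3LR}(\phi_n)$ for some even $\mathbb{K}$-linear map $\phi_n\in C^1(L,L)$, I would define the formal automorphism $\Phi_t = \mathrm{id}_L - t^n\phi_n$ and form the equivalent deformation
\[
[x,y,z]'_t := \Phi_t^{-1}\,[\Phi_t(x),\Phi_t(y),\Phi_t(z)]_t.
\]
Expanding in powers of $t$ and using that $m_i=0$ for $0<i<n$, the coefficients $m'_i$ of $[\cdot,\cdot,\cdot]'_t$ satisfy $m'_0=m_0$, $m'_i=0$ for $0<i<n$, and by the same computation as in the previous theorem (applied at order $t^n$), the $n$-th coefficient is
\[
m'_n(x,y,z) = m_n(x,y,z) - \delta_{3LR}(\phi_n)(x,y,z) = 0.
\]
Hence $[\cdot,\cdot,\cdot]'_t$ is an equivalent deformation whose first non-trivial infinitesimal is now at order $\geq n+1$.

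If the new $(n+1)$-infinitesimal (respectively the next non-vanishing $m'_k$, $k>n$) is again a coboundary, I iterate the same construction with a $\phi_{n+1}\in C^1(L,L)$ (respectively $\phi_k$) and the automorphism $\mathrm{id}_L - t^{n+1}\phi_{n+1}$, pushing the order of the first non-trivial infinitesimal strictly higher at each step. Either the procedure terminates at some finite stage with a non-coboundary infinitesimal—which is precisely what the theorem claims—or it continues indefinitely. In the latter case, the ordered composition $\Phi_t = \cdots (\mathrm{id}_L - t^{n+2}\phi_{n+2})(\mathrm{id}_L - t^{n+1}\phi_{n+1})(\mathrm{id}_L - t^n\phi_n)$ converges in the $t$-adic topology on $\mathrm{End}(L)[[t]]$, defines a formal automorphism of $L[[t]]$, and conjugates $[\cdot,\cdot,\cdot]_t$ into $m_0$, so the deformation is trivial, contradicting the hypothesis.

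The main obstacle is the order-by-order bookkeeping at the key step: verifying that, after gauging by $\Phi_t=\mathrm{id}_L - t^n\phi_n$, the coefficient of $t^n$ in $\Phi_t^{-1}[\Phi_t(\cdot),\Phi_t(\cdot),\Phi_t(\cdot)]_t$ is exactly $m_n - \delta_{3LR}(\phi_n)$ and the intermediate coefficients remain zero. This relies crucially on the vanishing of $m_1,\dots,m_{n-1}$ (so that no lower-order cross terms contribute at level $t^n$) and on the explicit formula for $\delta_{3LR}$ applied to a $1$-cochain, namely $\delta_{3LR}(\phi_n)(x,y,z)=[\phi_n(x),y,z]+[x,\phi_n(y),z]+[x,y,\phi_n(z)]-\phi_n([x,y,z])$, which is the $3$-Lie-Rinehart analogue of the formula already used in the proof of the preceding theorem. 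Once this is established, the iteration and the $t$-adic convergence argument are standard.
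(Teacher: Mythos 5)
Your proof is correct and follows essentially the same route as the paper: if the $n$-infinitesimal is a coboundary $\delta_{3LR}(\phi)$, gauge by $\Phi_t=\mathrm{id}_L\pm t^n\phi$ (your sign convention for $\delta_{3LR}$ on $1$-cochains is the opposite of the paper's, so your choice $\mathrm{id}_L-t^n\phi_n$ is the consistent one) and compare coefficients at order $t^n$ to kill $m_n$ while leaving all lower coefficients untouched. In fact you are more complete than the paper, which only records this single gauge step and asserts the conclusion, whereas you make explicit the iteration and the $t$-adic convergence of the composed automorphisms showing that a never-terminating process would force triviality.
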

\begin{proof} Let $\mathrm{m}_t$ be a deformation of $3$-Lie-Rinehart superalgebra with $n$-infinitesimal $m_n$ for
some $n\geq 1$. Assume that there exists a 3-cochain $\phi\in C^{2}(L, L)$ with $\delta (\phi)=m_n$. Set
$\Phi_t=id_L+\phi t^n$ and $\mathrm{\widetilde{m}}=\Phi_t^{-1}\circ \mathrm{m}_t\circ\Phi_t$.
Comparing the coefficients of $t^n$, we get the following equation:
\begin{eqnarray*}
\widetilde{m}_n-m_n=-\delta_{3LR}(\phi).
\end{eqnarray*}
So $\widetilde{m}_n=0$.  Then, one has the deformation whose $n$-infinitesimal is not a coboundary for some $n\geq 1$.
\end{proof}

\end{document}